\theoremstyle{definition}
\newtheorem{Def}{Definition}[section]
\theoremstyle{plain}
\newtheorem{Pro}[Def]{Proposition}
\newtheorem{Lem}[Def]{Lemma}
\newtheorem{The}[Def]{Theorem}
\newtheorem{Cor}[Def]{Corollary}
\theoremstyle{remark}
\newtheorem{Rem}[Def]{Remark}
\renewcommand\[{\begin{equation}}
\renewcommand\]{\end{equation}}
\newcommand{\NN}{\mathbb{N}}
\newcommand{\ZZ}{\mathbb{Z}}
\newcommand\RR{\mathbb{R}}
\newcommand\WW{\mathbb{W}}
\newcommand{\defeq}{\mathrel{\mathop:}=}
\newcommand\di[1]{\mathrm{d}#1}
\newcommand\ind{\mathds{1}}
\newcommand{\norme}[1]{\left\Vert #1\right\Vert}
\newcommand\abs[1]{| #1 |}
\newcommand{\est}{\lesssim}%estimation
\newcommand{\cL}{\mathcal{L}}
\newcommand\tY{\widetilde{Y}}
\newcommand\ty{\widetilde{y}}
\newcommand{\bunderline}[1]{\mkern2mu\underline{\mkern-2mu#1\mkern-4mu}\mkern4mu }%bar du bas
\newcommand\ab[1]{\bunderline{#1}}%Elements du model space
\newcommand\atau{\ab{\tau}}
\newcommand\af{\ab{f}}
\newcommand\aX{\ab{X}}
\newcommand\aY{\ab{Y}}
\newcommand\atY{\widetilde{\ab{Y}}}
\newcommand\aZ{\widetilde{\ab{Z}}}
\newcommand\aDelta{\ab{\Delta}}
\newcommand\aA{\ab{A}}
\newcommand\aR{\ab{R}}
\newcommand\an{\ab{1}}
\newcommand\aW{\ab{W}}
\newcommand\aWd{\dot{\ab{W}}}
\newcommand\rW{\textbf{W}}
\newcommand\aWWd{\ab{\dot{\WW}}}
\newcommand\aWaWd{\ab{W\dot{W}}}
\newcommand\model{D^\gamma_M}
\newcommand\rmodel{D^{2\alpha}_{M^r}}
\newcommand\rinter{(1/3,1/2]}
\newcommand\ahol{C^\alpha}
\newcommand\aI{I}
\newcommand{\Rc}{{\cal{R}}}
\newcommand\FF{\hat{F}}
\newcommand\DD{{\mathscr{D}}}
\newcommand\rC{\mathscr{C}}
\newcommand{\rnorme}[1]{\norme{#1}_{{2\alpha},T}^*}
\newcommand{\rsnorme}[1]{\norme{#1}_{{2\alpha},T}}
\newcommand\mul{\star}
\newcommand\ga{\Gamma_{t,s}}
\newcommand{\proj}{Q_{<2\alpha}}
\newcommand\modspace{{\mathcal{T}}}
\newcommand\indexSet{{\mathcal{A}}}
\newcommand\grou{\mathcal{G}}
\newcommand\TT{{\mathscr{T}}}
\newcommand\vect[1]{\text{Vect}\langle #1 \rangle}
\newcommand\vv{\text{Vect}}
\title{Solving rough differential equations with the theory of regularity structures}
\author{Antoine Brault
\footnote{Institut de Mathématiques de Toulouse. CNRS UMR 5219.
Université Paul Sabatier
118, route de Narbonne, F-31062 Toulouse Cedex 09, France.
Email: \texttt{antoine.brault@math.univ-toulouse.fr}}}
\begin{document}
\maketitle

\begin{abstract}
The purpose of this article is to solve rough differential equations with the theory of  regularity structures. These new tools recently developed by Martin Hairer for solving semi-linear partial differential stochastic equations were inspired by the rough path theory. We take a pedagogical approach to facilitate the understanding of this new theory. We recover results of the rough path theory with the regularity structure framework. Hence, we show how to formulate a fixed point problem in the abstract space of modelled distributions to solve the rough differential equations. We also give  a proof of the existence of a rough path lift with  the theory of regularity structure.
\end{abstract}
\tableofcontents
%%%%%%%%%%%%%%%%%%%%%%%%%%%%%%%%%%%%%%%%%%%%%%%%%%%%%%%%%%%%%%%%%%%%%%%%
\section{Introduction}
Let $T>0$ be a finite time horizon.
Suppose that we want to solve the following ordinary differential equation
\[\forall t\in [0,T],~\di{y_t} = F(y_t)\di{W_t},\qquad y_0=\xi,
\label{rde}\]
where $W: [0,T] \rightarrow \RR^n$ and $F:\RR^d \rightarrow \mathcal{L}(\RR^{n},\RR^d)$ are regular functions. The equation \eqref{rde} can be reformulated as
\[\forall t\in [0,T],\quad y_t = \xi + \int_{0}^{t}F(y_u)\di{W_u}\label{int_rde}.\]  
When $W$ is smooth, the equation \eqref{rde} is well-defined as $$y_t = \xi + \int_{0}^{t}F(y_u)\dot{W}_u\di{u},$$
where $\dot{W}$ represents the derivative of $W$.
 Therefore, it becomes an ordinary differential equation that can be solved by a fixed-point argument.

Unfortunately,  there are many natural situations in which we would like to consider the equation of type \eqref{int_rde} for an irregular path $W$. This is notably the case when dealing with stochastic processes. For example the paths of the Brownian motion are almost surely nowhere differentiable \cite{karatzas}. It is then impossible to interpret \eqref{rde} in a classical sense. Indeed, even if $\dot{W}$ is understood as a distribution, it is not possible in general to define a natural product between distributions, as $y$ is itself to be thought as a distribution.

On the one hand, to overcome this issue, Itô's theory \cite{karatzas} was built to define properly an integral against a martingale $M$ (for example the Brownian motion) : $\int_0^t Z_u \di{M}_u$, where $Z$ must have some good properties. The definition is not pathwise as it involves a limit in probability. Moreover, this theory is successful to develop a stochastic calculus with martingales but fails when this property vanishes. This is the case for the fractional Brownian motion, a natural process in modelling. Another bad property is that the map $W\mapsto y$ is not continuous in general with the associated uniform topology \cite{lyons91}.

On the other hand, L.C. Young proved in \cite{young} that we can define the integral of $f$ against $g$ if $f$ is $\alpha$-Hölder, $g$ is $\beta$-Hölder with $\alpha +\beta >1$ as 

\begin{align*}
\int_{0}^t f\di{g} =\lim_{|P|\rightarrow 0} \sum_  {u,v \text{ successive points in }P} f(u)(g(v)-g(u)),
\end{align*}
 where $P$ is a subdivision of $[0,t]$ and $|P|$ denotes its mesh. This result is sharp, so that it is not possible to extend it to the case $\alpha + \beta \leq 1$ \cite{young}. If $W$ is $\alpha$-Hölder it seems natural to think that $y$ is $\alpha$-Hölder, too. So assuming $\alpha<1/2$ then $2\alpha<1$, and  Young's integral fails to give a meaning to \eqref{rde}. The fractional Brownian motion which depends on a parameter $H$ giving its Hölder regularity cannot be dealt with  Young's integral as soon as $H\leq 1/2$.

T. Lyons introduced in \cite{lyons} the rough path
theory which overcomes  Young's limitation. The main idea is to construct for $0\leq s\leq t\leq T$  an object $\WW_{s,t}$ which \textquotedblleft looks 
like\textquotedblright\ $\int_{s}^{t}(W_u-W_s)\di{W}_u$ and then define an integral against $(W,\WW)$. This is done with the sewing lemma (Theorem~\ref{classic_rough_int}).
This theory enabled to solve \eqref{rde} in most of the cases and to define a topology such that the Itô map $(W,\WW)\mapsto y$ is continuous.
Here, the rough path $(W,\WW)$ \textquotedblleft encodes\textquotedblright\ the path $W$ with algebraic operations. It is an extension of the Chen series developed in \cite{chen} and \cite{lyons94} to solve controlled differential equations.
Since the original article of T. Lyons, other approaches of the rough  paths theory were developed in \cite{davie}, \cite{gubinelli} and  \cite{bailleul}.
The article \cite{coutinlejay} deals with the linear rough equations with a bounded operators.
For monographs about the rough path theory, the reader  can refer to \cite{qian} or \cite{friz_victoir}.

Recently, M. Hairer developed in \cite{hairer} the theory of regularity structures which can be viewed as a generalisation of the rough path theory. It allows to give a good meaning and to solve singular stochastic partial differential equations (SPDE). One of the main ideas is to build solutions from  approximations at different scales. This is done with the  reconstruction theorem (Theorem~\ref{the_reconstruction_map}).
 Another fruitful  theory was introduced to solve SPDE in \cite{gip} and also studied in 
\cite{bailleulbernicotfrey}.

The main goal of this article is to make  this new theory  understandable to people who are  familiar with rough differential equations or ordinary differential equations.

Thus, we propose to solve \eqref{rde} with the theory of regularity structures, when the Hölder regularity of $W$ is in $(1/3,1/2]$. In particular, we build the rough integral (Theorem \ref{classic_rough_int}) and the tensor of order $2$: $\WW$ (Theorem \ref{levyArea}) with the reconstruction theorem.

Our approach is very related to \cite[Chapter~$13$]{frizhairer}
where is established the link between rough differential equations and the theory of regularity structures. However, we give here the detailed 
 proofs of Theorem~\ref{levyArea} and Theorem~\ref{classic_rough_int} with the reconstruction theorem. It seems important to make the link between the two theories but is skipped in \cite{frizhairer}.

This article can be read without knowing about rough path or regularity structure theories.

  After introducing notation in Section~\ref{sec:notation},  we introduce in  Section \ref{sec_holder} the Hölder spaces which allow us to \textquotedblleft measure\textquotedblright\ the regularity of a function. Then, we present the rough path theory in  Section~\ref{sec_elements_rough_path_theory}. In the Sections \ref{sec_regularitystructures} and \ref{sec_modelledDistribution} we give the framework of the theory of  regularity structures and the modelled distributions for solving \eqref{rde}. We prove in Sections \ref{sec_roughInt} and \ref{sub_rough_lift} the existence of the controlled rough path integral and the existence of a rough path lift. Finally, after having defined the composition of a function with a modelled distribution in Section \ref{sec_comp}, we solve the rough differential equation \eqref{rde} in  Section \ref{sec_solve}. 

%%%%%%%%%%%%%%%%%%%%%%%%%%%%%%%%%%%%%%%%%%%%%%%%%%%%%%%%%%%%%%%%%%%
\section{Notations}
\label{sec:notation}
We denote by $\cL(A,B)$\nomenclature{$\mathcal{L}(A,B) $}{Linear continuous maps between two vector spaces $A$ and $B$}, the set of linear continuous maps between two vector spaces $A$ and $B$.
Throughout the article, $C$ denotes a  positive constant whose value may change. For two functions $f$ and $g$, we write $f\est g$\nomenclature{$\est $}{Estimation} if there is a constant $C$ such that $f\leq C g$. The symbol $\defeq$\nomenclature{$\defeq$}{Symbol to define an object} means that the right hand side of the equality defines the left hand side. For a function $Z$ from $[0,T]$ to a vector space, its increment is denoted by $Z_{s,t}\defeq Z_t-Z_s$\nomenclature{$Z_{s,t}$}{Means $Z_t-Z_s$}. If $X_1,...,X_k$ are $k$  vectors of a linear space, we denote by $\vect{X_1,...,X_k}$\nomenclature{$\vect{\cdot}$}{Subspace generated by the linear combinations of $\cdot$.} the subspace generated by the linear combinations of these vectors. Let $T$ be a non-negative real, we denote by $[0,T]$ a compact interval of $\RR$. For a continuous function $f: [0,T]\rightarrow E$, where $\left(E,\norme{\cdot}\right)$ is a Banach space, we denote by $\norme{f}_{\infty,T}$\nomenclature{$\norme{f}_{\infty,T}$}{Supremum of $\norme{f(t)}$ for  $t\in [0,T]$} the supremum of $\norme{f(t)}$ for  $t\in [0,T]$. The tensor product is denoted by $\otimes$\nomenclature{$\otimes$}{Tensor product}.
We denote $\lfloor\cdot\rfloor$\nomenclature{$\lfloor\cdot\rfloor$}{Floor function} the floor function.
%%%%%%%%%%%%%%%%%%%%%%%%%%%%%%%%%%%%%%%%%%%%%%%%%%%%%%%%%%%%%%%%%%%%%%%%%
\section{Hölder spaces}
\label{sec_holder}

\subsection{Classical Hölder spaces with a positive exponent}
We introduce  Hölder spaces which allow us to characterize the regularity of a non-differentiable function.
\begin{Def}
\label{holder1}
For $0\leq \alpha < 1$ and $T>0$, the function $f:[0,T]\rightarrow E$ is $\alpha$-Hölder if
$$\sup_{s\ne t\in [0,T]} \frac{\norme{f(t)-f(s)}}{\abs{t-s}^\alpha}< + \infty.$$
We denote by $C^\alpha(E)$\nomenclature{$\ahol$}{$\alpha$-Hölder continuous functions} the \emph{space of $\alpha$-Hölder functions} equipped with the semi-norm $$\norme{f}_{\alpha,T} \defeq \sup_{s\ne t\in [0,T]} \frac{\norme{f(t)-f(s)}}{\abs{t-s}^\alpha}.$$

If $\alpha \geq 1$ such that $\alpha = q + \beta$ where $q\in \NN$ and $\beta\in [0,1)$, we set $f\in C^\alpha(E)$ if $f$ has $q$ derivatives and $f^{(q)}$ is $\beta$-Hölder, where $f^{(q)}$ denotes the derivative of order $k$ ($f^{(0)}\defeq f$).
%Hölder. Furthermore, we define $\ahol_b$\nomenclature{$\ahol_b$}{Hölder space of bounded functions} the space such that $f^{(q)}$ is bounded by a constant independent of $T$. 

We denote by $\ahol=\ahol(\RR^n)$. For $q\in \NN$, we denote by $C^q_b$\nomenclature{$C^q_b$}{Bounded functions with all derivative up to order $q$ bounded. } the set of all functions $f\in C^q$ such that
\[\norme{f}_{C^q_b}\defeq \sum_{k=0}^q\norme{f^{(k)}}_\infty<+\infty.\]

Finally, for $q\in\NN$, we define $C_0^q$ the set of functions in $C^q_b$ with a compact support.
\end{Def}

\begin{Rem}
The linear space of $\alpha$-Hölder functions $\ahol(E)$ is a non separable Banach space endowed with one of the two equivalent norms $\norme{f(0)}+\norme{f}_{\alpha,T}$ or $\norme{f}_{\infty,T}+\norme{f}_{\alpha,T}$.
\end{Rem}

%\begin{Rem}
%If $\alpha>1$ and $\norme{f}_{\alpha,T}<+\infty$, then $f$ is a constant. Indeed, $$\frac{\norme{f(t)-f(s)}}{\abs{t-s}}\leq \norme{f}_{\alpha,T} \abs{t-s}^{\alpha-1},$$
%we conclude that $f'(t)=0$ for all $t\in ]0,T[.$
%\end{Rem}

\begin{Rem}
If $f$ is $\alpha$-Hölder on $[0,T]$, then $f$ is $\beta$-Hölder for $\beta<\alpha$, \emph{i.e.} $C^\alpha(E) \subset C^\beta(E)$.
\end{Rem}

%Later, we extend the Definition \ref{holder1} to the negative orders in the Definition~\ref{holder2}.

\subsection{Localised test functions and Hölder spaces with a negative exponent}

In equation~\eqref{rde}, typically $W$ is in $\ahol$ with $\alpha\in (0,1)$. We need to deal with the derivative of $W$ is the sense of distribution which should be of negative Hölder regularity $\alpha-1<0$. We give in this section the definition of the space $\ahol$ with $\alpha<0$. We show in Lemma~\ref{lem_lien_distribution_fonction} that an Hölder function is
$\alpha$-Hölder if and only if the derivative in the sense of distribution is $\alpha-1$-Hölder with $\alpha\in (0,1)$.

%In the next section, we define a model of a regularity structure. A model maps~$\modspace$ with the space of Schwartz distributions ${\cal{D}}'(\RR,\RR^n)$, \emph{\emph{i.e.}}, the space of linear continuous forms  from ${\cal{D}}(\RR,\RR^n)$ (the space of smooth functions compactly supported from $\RR$ to $\RR^n$ with the topology of uniform convergence over all compact intervals). 
For $r>0$, we denote by $B_r$\nomenclature{$B_r$}{The space of all functions in $\phi\in C^r_b$ compactly supported on $[-1,1]$, such that 
$\norme{\eta}_{C^r_b}\leq 1$.}  the space of all functions in $\eta\in C^r_b$ compactly supported on $[-1,1]$, such that 
$\norme{\eta}_{C^r_b}\leq 1$.

%
%We define Hölder spaces with negative exponent which is a space of distributions.

\begin{Def}
\label{def:test_functions}
For $\lambda >0,~s\in\RR$ and a test function $\eta\in B_r,$ we define the  \emph{test function localised} at $s$ by
$$\eta_s^\lambda(t) \defeq \frac{1}{\lambda}\eta\left(\frac{t-s}{\lambda}\right),$$
for all $t\in \RR$.
\end{Def}

\begin{Rem}
The lower  is $\lambda$, the more $\eta_s^\lambda$ is localised at $s$, as can be seen in Figure \ref{localised}.
\begin{figure}[!ht]
\centering
\includegraphics[scale=0.7]{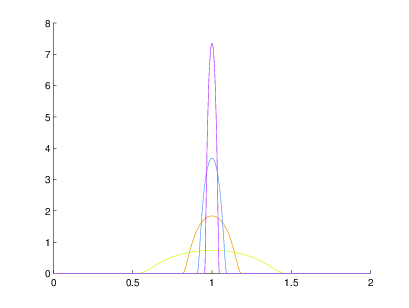}
\caption{Representation of $\eta_s^\lambda$ for $s=1$, $\lambda\in\{0.5,0.2,0.1,0.05\}$ and with $\eta(s) = \exp(-1/(1-s^2))\ind_{(-1,1)}(s).$}
\label{localised}
\end{figure}
\end{Rem}

\begin{Rem}
We work here with $t,s\in \RR$, because we want to solve stochastic ordinary differential equations. But in the case of  stochastic partial differential equations, the parameters $t$ and $s$ belong to $\RR^e$ where $e$ is an integer, see \cite{hairer}.
\end{Rem}

\begin{Def}
\label{holder2}
For $\alpha<0$, we define the \emph{Hölder space} $C^\alpha$ as elements in the dual of $C^r_0$ where $r$ is an integer strictly greater than $-\alpha$ and such that for any $\xi\in C^\alpha$ the following estimate holds
\[|\xi(\eta_s^\lambda)| \leq C(T) \lambda^\alpha,\label{eq:negative_holder}\]
 where $C(T)\geq 0$ is a constant uniform over all $s\in [0,T]$, $\lambda\in  (0,1]$ and  $\eta\in B_r$.

We define the semi-norm on $C^\alpha$ as the lowest constant $C(T)$ for a fixed compact $[0,T]$, i.e 
\begin{align*}
{\xi}_{\alpha,T}\defeq\sup_{s\in [0,T]}\sup_{\eta\in B_r}\sup_{\lambda\in (0,1]} \left|\frac{\xi(\eta_s^\lambda)}{\lambda^\alpha}\right|.
\end{align*}
\end{Def}

\begin{Rem}
The space $C^\alpha$ does not depend on the choice of $r$, see for example \cite{frizhairer} Exercise 13.31, p. 209.
\end{Rem}

\begin{Rem}
With Definition \ref{holder1}, we can give a meaning of an $\alpha$-Hölder function for $\alpha\in \RR$.
Moreover it is possible to show that if $f$ is a function in $C^\alpha$ with $\alpha = q+\beta>0$ where $q$ is an integer and $\beta\in(0,1)$, then for every $x\in [0,T]$ and localised functions $\eta_x^\lambda$, 
$$\abs{(f-P_x)(\eta_x^\lambda)}\leq C \lambda^\beta,$$
where $C$ is uniform over $x\in[0,T]$, $\lambda\in (0,1]$ and $\eta\in B_r$ ($r$ a positive integer),
$P_x$ is the Taylor expansion of $f$ of the order $q$ in $x$, and $f-P_x$ is view as the canonical function associated.

Now, when we say that $f\in C^\alpha$ we should distinguish two cases :
\begin{itemize}
\item if $\alpha\geq 0$, $f$ is an $\alpha$-Hölder \emph{function} in the sense of Definition \ref{holder1}
\item if $\alpha<0$, $f$ is an $\alpha$-Hölder \emph{distribution} in the sense of  Definition \ref{holder2}.
\end{itemize}
\end{Rem}

We give here a characterization of the space $C^\alpha$ for $\alpha\in (-1,0)$ which is useful to make a link between the rough path and the regularity structures theories.
\begin{Lem}
\label{lem_lien_distribution_fonction}
For any $\beta\in (0,1)$, the distribution $\xi\in C^{\beta-1}$ if and only if there exist a  function $z\in C^\beta$ such that $z(0)=0$ and
\[\forall \eta\in C^1_0,~ \xi(\eta)=-\langle z,\eta'\rangle.\]
Which means that $z'=\xi$ in the sense of distribution.
Moreover, for all $t\in [0,1]$,
\begin{align*}
z(t) = \sum_{k\in I_l}\langle\xi,\phi^l_k\rangle\int_0^t\phi^l_k+\sum_{j\geq l}\sum_{k\in I_j} \langle\xi,\psi^j_k\rangle\int_0^t\psi^j_k,
\end{align*}
where $\phi,\psi$ are defined in Theorem~\ref{def:wavelet} with a compact support in $[-c,c]$ ($c\geq 0$), $l$ is an integer such that $2^{-l}c\leq 1$ and $I_j:=[-\lfloor c\rfloor,2^j+\lfloor c\rfloor]\bigcap\ZZ$.
\end{Lem}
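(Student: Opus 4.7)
I plan to treat the two directions of the equivalence separately.

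For the easy direction ($\Leftarrow$), I start from a $z\in\ahol$ with $z(0)=0$ and $\xi\defeq z'$ in the sense of distributions, and bound $|\xi(\eta^\lambda_s)|$ directly. The identity $\xi(\eta^\lambda_s)=-\langle z,(\eta^\lambda_s)'\rangle$ combined with $\int(\eta^\lambda_s)'(t)\,dt=0$ (valid because $\eta$ has compact support) allows me to subtract $z(s)$ inside the integral, obtaining $\xi(\eta^\lambda_s)=-\int(z(t)-z(s))(\eta^\lambda_s)'(t)\,dt$. Using $|z(t)-z(s)|\leq\norme{z}_{\beta,T}\lambda^\beta$ on the support of $\eta^\lambda_s$, together with $|(\eta^\lambda_s)'(t)|\leq\lambda^{-2}$ and support length $2\lambda$, this yields $|\xi(\eta^\lambda_s)|\est\norme{z}_{\beta,T}\lambda^{\beta-1}$.

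For the hard direction ($\Rightarrow$), given $\xi\in C^{\beta-1}$ I define $z$ by the explicit wavelet formula in the statement and verify three claims. First, $z(0)=0$ is immediate since $\int_0^0=0$. Second, the Hölder bound $|z(t)-z(s)|\est|t-s|^\beta$ is the technical core. Exploiting that $\psi^j_k$ is, up to an $L^2$-normalisation constant, a rescaled test function at scale $\lambda=2^{-j}$ centred at $k2^{-j}$, the hypothesis $\xi\in C^{\beta-1}$ gives $|\langle\xi,\psi^j_k\rangle|\est 2^{-j(\beta-1/2)}$. To bound $|\int_s^t\psi^j_k|$ I use a dichotomy on $2^j|t-s|$. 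When $2^j|t-s|\leq 1$, only $O(1)$ wavelets at scale $j$ overlap $[s,t]$ and each integral is bounded by $2^{j/2}|t-s|$ (Lipschitz), so the scale-$j$ contribution is $\est 2^{j(1-\beta)}|t-s|$. When $2^j|t-s|>1$, any $\psi^j_k$ whose support lies entirely inside $[s,t]$ contributes $0$ by the mean-zero property of $\psi$, leaving only $O(1)$ boundary wavelets with $|\int_s^t\psi^j_k|\est 2^{-j/2}$, so the scale-$j$ contribution is $\est 2^{-j\beta}$. Summing the two geometric series in $j$ across the cutoff $2^{j_0}\sim|t-s|^{-1}$ produces $\est|t-s|^\beta$; the finite $\phi^l_k$-sum is trivially Lipschitz and contributes similarly. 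Third, to verify $z'=\xi$ distributionally, I pair $z$ with $\eta'$ for arbitrary $\eta\in C^1_0$, exchange sum and integral (justified by the absolute convergence coming from the same estimates), integrate by parts termwise to obtain $\int\eta'(t)\int_0^t\psi^j_k(u)\,du\,dt=-\langle\eta,\psi^j_k\rangle$ and similarly for $\phi^l_k$, and finally collapse the resulting double sum using the wavelet expansion of $\eta$ itself, yielding $\langle z,\eta'\rangle=-\xi(\eta)$.

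The main obstacle I expect is the Hölder estimate in the hard direction, and specifically the use of the mean-zero property of $\psi$ in the regime $2^j|t-s|>1$. Without this cancellation, there would be $O(2^j|t-s|)$ wavelets at scale $j$ contributing non-trivially, which would destroy the summability in $j$. Organising the dyadic sums around the cutoff $2^{j_0}\sim|t-s|^{-1}$ and making the cancellation explicit is the technical heart of the argument.
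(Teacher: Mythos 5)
Your proposal is correct and follows essentially the same wavelet-based strategy as the paper's own proof: the easy direction via subtracting a constant inside the integral and using the Hölder bound on $z$ over a support of size $\lambda$; the hard direction via the explicit wavelet formula for $z$, a dyadic dichotomy across the scale $j_0$ with $2^{-j_0}\sim|t-s|$, exploiting $\int\psi=0$ at the fine scales, and a termwise integration-by-parts plus the convergence of the wavelet expansion of $\eta$ in $C^1_0$ to verify $z'=\xi$ distributionally. The only cosmetic difference is that where you discard interior wavelets directly because $\int_s^t\psi^j_k=0$ when the support lies inside $[s,t]$, the paper instead bounds $\norme{S_j}_\infty$ via the compactly supported primitive $\hat\psi$, but these are the same cancellation expressed in two ways; the paper also explicitly routes the distributional verification through $\eta\in C^2_0$ (to get two vanishing moments) and a density argument, a detail you elide but whose need your plan implicitly acknowledges.
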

The proof of  Lemma \ref{lem_lien_distribution_fonction} requires to introduce elements of the wavelet theory. The proof of the following theorem can be found in \cite{meyer}.
\begin{The}
\label{def:wavelet}
There exist $\phi,\psi\in C^1_0(\RR)$ such that  for all $n\in \NN$
\[\{\phi_k^i\defeq 2^{i/2}\phi(2^i\cdot-k),~ k\in \ZZ\}\cup\{\psi_k^j\defeq 2^{j/2}\psi(2^{j}\cdot-k),~k\in \ZZ,~j\geq i\}\]
is an orthonormal basis of $L^2(\RR)$.
This means that for all $f\in L^2(\RR)$, $i\in\NN$ we can write
\[f(t)=\sum_{j\geq i}^{+\infty}\sum_{k\in \ZZ}\langle f,\psi_k^j\rangle \psi^j_k(t)+\sum_{k\in \ZZ}\langle f,\phi^i_k\rangle\phi^i_k(t),\label{eq:decomp_ondelettes}\]
where the convergence is in $L^2(\RR)$. Moreover, we have the very useful property,
\[\int \psi(t)t^k\di{t}=0,\]
for $k\in \{0,1\}$. 
% Moreover we can choose $\phi,\psi$ such that  $f\in C^\beta$ with $\beta\in (0,1)$ if and only if the coefficients of the decomposition
%\eqref{eq:decomp_ondelettes} have the bounds
%\[\forall s\in \ZZ,~ |<f,\psi^n_s>|\leq C_1 2^{-\frac{n}{2}}2^{-n\beta},~\mathrm{and} ~\forall s\in\ZZ,~|<f,\phi^0_s>|\leq C_2,  \]
%where $C_1,C_2\geq 0$ are two constants and $n\in \NN$.
\end{The}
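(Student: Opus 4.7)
The plan is to use multiresolution analysis (MRA), the standard framework in wavelet theory. I would construct a scaling function $\phi$ first, then define $\psi$ from $\phi$; to achieve compact support, $C^1_0$ regularity, and two vanishing moments simultaneously, I would follow the Daubechies construction.

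Step one is to produce a compactly supported $\phi$ whose integer translates are orthonormal. I look for a function satisfying a refinement equation
\[\phi(t)=\sqrt{2}\sum_{k=0}^{M}h_k\,\phi(2t-k)\]
with only finitely many nonzero coefficients $h_k$. The Fourier symbol $m_0(\xi):=\tfrac{1}{\sqrt{2}}\sum_k h_k e^{-ik\xi}$ must satisfy the quadrature-mirror-filter identity $|m_0(\xi)|^2+|m_0(\xi+\pi)|^2=1$ together with the normalization $m_0(0)=1$. To get $N=2$ vanishing moments I factorise $m_0(\xi)=\bigl((1+e^{-i\xi})/2\bigr)^{N}Q(\xi)$, where the auxiliary trigonometric polynomial $Q$ is produced by the Fejér--Riesz lemma so that the QMF identity still holds. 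The scaling function is then recovered by the infinite product $\hat\phi(\xi)=\prod_{j\geq 1}m_0(\xi/2^j)$, which converges uniformly on compacts; the Paley--Wiener theorem (thanks to the finite filter length) then yields compact support of $\phi$.

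Step two is to define $\psi(t):=\sqrt{2}\sum_k(-1)^k h_{1-k}\,\phi(2t-k)$, which is compactly supported and inherits the regularity of $\phi$. The factor $((1+e^{-i\xi})/2)^N$ appearing in $m_0$ translates, after a short Fourier computation, into $\hat\psi$ vanishing to order $N$ at the origin, hence $\int\psi(t)t^k\di{t}=0$ for $k\in\{0,1\}$. For the orthonormal basis property, I set $V_i:=\overline{\vect{\phi^i_k : k\in\ZZ}}$ and let $W_i$ be the orthogonal complement of $V_i$ in $V_{i+1}$. The QMF identity shows that $V_i\subset V_{i+1}$ and that $\{\psi^i_k\}_{k\in\ZZ}$ is an orthonormal basis of $W_i$. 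Using $m_0(0)=1$ and dominated convergence in the Fourier representation, one checks $\overline{\bigcup_i V_i}=L^2(\RR)$ and $\bigcap_i V_i=\{0\}$, so that the orthogonal decomposition $L^2(\RR)=V_i\oplus\bigoplus_{j\geq i}W_j$ yields the stated basis and the expansion \eqref{eq:decomp_ondelettes}.

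The main obstacle is securing the $C^1$ regularity of $\phi$ while preserving compact support and two vanishing moments. Concretely, one needs a decay estimate $|\hat\phi(\xi)|\est(1+|\xi|)^{-2-\varepsilon}$ so that $\xi\,\hat\phi(\xi)\in L^1(\RR)$; starting from $|\hat\phi(\xi)|\est|\xi|^{-N}\prod_{j\geq 1}|Q(\xi/2^j)|$, this reduces to sharp control of the sup-norm of $Q$ along dyadic rescalings. This forces $N$ to be chosen strictly larger than the minimum needed for two vanishing moments (in the Daubechies family $N=4$ suffices for $C^1$ regularity, at the cost of a support of length $7$), and tuning the polynomial degree of $Q$ so that the QMF condition and the decay estimate are jointly met is the real technical heart of the construction.
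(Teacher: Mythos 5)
Your sketch is correct in outline, and it coincides with what the paper itself relies on: the paper gives no proof of this theorem but simply refers to Meyer's monograph \cite{meyer}, where exactly this multiresolution-analysis/Daubechies construction (compactly supported scaling function from a finite QMF filter, $\psi$ defined by the mirror filter, vanishing moments from the factor $\bigl((1+e^{-i\xi})/2\bigr)^N$, regularity from decay of $\hat\phi$) is carried out. The only point you gloss over is that the QMF identity alone does not yield orthonormality of the translates of $\phi$ obtained from the infinite product; one needs in addition Cohen's (or Lawton's) criterion, which the Daubechies filters satisfy since $m_0$ does not vanish on $[-\pi/2,\pi/2]$.
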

\begin{Rem}
The notation in  Definition~\ref{def:test_functions} for $\eta_s^\lambda$ and in  Theorem~\ref{def:wavelet} for $\phi_k^i$, $\psi_k^j$ are similar but the meaning
are slightly different.
\end{Rem}

We now proceed to the proof of Lemma \ref{lem_lien_distribution_fonction}.
\begin{proof}[Proof of Lemma~\ref{lem_lien_distribution_fonction}]
The first implication is trivial and does not require the wavelet analysis.
If there exists $z\in C^\alpha$ such that for any $\eta\in C^1_0$,
$\xi(\eta)=-\langle z,\dot{\eta}\rangle$, then for $\lambda\in (0,1)$ and $s\in\RR$, 
\begin{align*}
\xi(\eta_s^\lambda)&=-\frac{1}{\lambda^2}\int_\RR z(u)\dot{\eta}\left(\frac{u-s}{\lambda}\right)\di{u}\\
&=-\frac{1}{\lambda^2}\int_\RR (z(u)-z(-\lambda+s))\dot{\eta}\left(\frac{u-s}{\lambda}\right)\di{u},
\end{align*}
where the last equality holds because $\eta$ is compactly supported.

With the condition $\eta\in B_1$, $u\mapsto \dot{\eta}((u-s)/\lambda)$
is supported on\\ $[-\lambda+s,\lambda+s]$, which yields to the bound
\[|\xi(\eta_s^\lambda)|\leq 2\norme{\eta}_{C^1}\norme{z}_\alpha\lambda^{\alpha-1},\]
and proves that $\xi\in C^{\alpha-1}$.

Now, we prove the converse. Let $\phi,\psi\in C^1_0$ be defined in Theorem~\ref{def:wavelet}. Let $c\geq 0$ be a constant such that supports of
$\phi$ and $\psi$ are in $[-c,c]$. We denote $l$ an integer such that
$2^{-l}c\leq 1$. Thus, the support of $\phi_0^l$ is in $[-1,1]$ and
the support of $\psi_k^j$ is smaller than $2$ for $j\geq l$.

For $\xi\in C^{\alpha-1}$ for $\alpha\in (0,1)$ we  define for $t\in [0,1]$,
\[z(t)\defeq  \sum_{k\in\ZZ}\langle\xi,\phi^l_k\rangle\int_0^t\phi^l_k+\sum_{j\geq l}\sum_{k\in\ZZ} \langle\xi,\psi^j_k\rangle\int_0^t\psi^j_k\label{eq:def_z}.\]
Noting that for $j\geq l$ and $k\in\ZZ$, $\phi_k^l$ and $\psi_k^l$ are compactly
supported in $[2^{-j}(k-c),2^{-j}(k+c)]$, the terms $\int_0^1\phi_k^j$ and 
$\int_0^1\psi_k^j$ vanish when
$2^{-j}(k+c)\leq 0$ and $1\leq 2^{-j}(k-c)$.
 Thus, we can rewrite \eqref{eq:def_z} as
 \begin{align}
 z(t) = \sum_{k\in I_l}\langle\xi,\phi^l_k\rangle\int_0^t\phi^l_k+\sum_{j\geq l}\sum_{k\in I_j} \langle\xi,\psi^j_k\rangle\int_0^t\psi^j_k,\label{eq:def_z_2} 
 \end{align}
 where $I_j:=[-\lfloor c\rfloor,2^j+\lfloor c\rfloor]\bigcap\ZZ$.
%We show that \eqref{eq:def_z_2} is defined respect to the Hölder norm
%and that \mbox{$z\in C^\alpha$}. 
%We can assume that $\phi,\psi$ are compactly supported in $[0,1]$, thus the functions $\psi_k^i$ are compactly supported in $[2^{-j}k,2^{-j}(k+1)]$. We  rewrite \eqref{eq:def_z} as
%\[z\defeq \langle\xi,\phi\rangle\int_0^t\phi+\sum_{j\in \NN}\sum_{k=0}^{2^j-1} \langle\xi,\psi^j_k\rangle\int_0^t\psi^j_k\label{eq:def_z_2},\]
%because the terms $\langle\psi_k^j,\ind_{[0,1]}\rangle$ vanish for $k\notin [0,2^j-1]$.
The series on the right hand side of \eqref{eq:def_z} converges in the sense of distributions.
We need to justify that the limit $z$ is in $C^\alpha$.

We denote for any integer $N\in\NN$,
\[S_N^z\defeq \sum_{j=l}^NS_j,\]
where $S_j(t)\defeq \sum_{k\in I_l} \langle\xi,\psi^j_k\rangle\int_0^t\psi^j_k$.
According to \eqref{eq:negative_holder}, for all $j\geq l$ and $k\in I_j$
\begin{align*}
|\langle\xi,\psi_k^j\rangle|\leq C 2^{j/2-j\alpha}.
\end{align*}
%Thus, we compute for $|t-s|\leq 1$,
%\begin{align*}
%\sum_j=0^N|S_j(t)-S_j(s)|&\leq C\norme{\phi}_\infty |t-s|+\sum_{j\in \NN}|S_j(t)-S_j(s)|. \label{eq:z_t}
%\end{align*} 
For $|t-s|\leq 1$, let $j_0\leq N$ be an integer such that 
$2^{-j_0}\leq |t-s|<2^{-j_0+1}$. This is always possible for $N$ large enough. On the one hand, if $l\leq j_0$, for $l\leq j\leq j_0$,
\begin{align}
\nonumber
|S_j(t)-S_j(s)|&\leq \norme{S'_j}_\infty|t-s|\\
\nonumber
&\leq |t-s|\sup_{u\in [0,1]}\sum_{I_j}|\langle\xi,\psi_k^j\rangle|\cdot|\psi_k^j(u)|\\
&\leq C2^{j(1-\alpha)}|t-s|,\label{eq:S_j0}
\end{align}
where we use the fact that $\sum_{k\in I_j}|\psi(2^jt-k)|\leq C$ for a constant $C\geq0$, because $\psi$ is compactly supported.
On the other hand, for $j>\max{(j_0,l)}$,
\begin{align}
|S_j(t)-S_j(s)|& \leq 2 \norme{S_j}_\infty\\
&\leq 2 \sup_{u\in [0,1]}\sum_{k=0}^{2^{j}-1}|\langle\xi,\psi_k^j\rangle|\cdot |2^{-j}\hat{\psi}_k^j(u)|,\\
&\leq 2C2^{-j\alpha}\sup_{u\in [0,1]}\sum_{k=0}^{2^j-1}|\hat{\psi}(2^ju-k)|,\label{eq:S_j1}
\end{align}
where $\hat{\psi}\defeq \int_0^t\psi$. Because $\int_\RR \psi=0$, there is a constant $C'\geq 0$ independent of $j$ such that $\sum_{k=0}^{2^j-1}|\hat{\psi}(2^ju-k)-\hat{\psi}(-k)|<C'$.
So finally, for $j>\max{(j_0,l)}$,
\[|S_j(t)-S_j(s)|\leq C 2^{-j\alpha}\label{eq:S_j2}.\] 
Thus, combining \eqref{eq:S_j0}, \eqref{eq:S_j2}, for $N\geq l$,
\begin{align*}
\sum_{j=l}^N|S_j(t)-S_j(s)|&\leq C|t-s|\sum_{j=l}^{j_0}2^{j(1-\alpha)}+C\sum_{j=j_0+1}^\infty 2^{-j\alpha}\\
&\leq C' |t-s|^\alpha,
\end{align*}
where $C'$ is a new constant ($\sum_{j=l}^{j_0}2^{j(1-\alpha)}=0$ if $j_0<l$).
It follows that $\norme{S^z_N}_{\alpha,1}$ is uniformly bounded in $N$ and thus that $z\in C^\alpha$.
% It follows that $\sum_{j=l}^N\norme{S_j}_{\alpha,1}$ converges. This proves that $S_N^z$ converges  in $C^\alpha$ and justifies the definition of $z$ in
%\eqref{eq:def_z} and \eqref{eq:def_z_2}.

%Moreover the convergence of \eqref{eq:def_z} is uniform according to Theorem \ref{the:wavelet_regularity}.
Now, we want to check that $\xi=\dot{z}$ in the distribution framework. For any $\eta\in C^2_0$,
\begin{align*}
\langle z,\dot{\eta}\rangle &=\sum_{k\in \ZZ} \langle\xi,\phi^l_k\rangle\Big\langle\int_0^t\phi^l_k,\dot{\eta}\Big\rangle+\sum_{j\geq l}\sum_{k\in \ZZ} \langle\xi,\psi^j_k\rangle\Big\langle\int_0^t\psi^j_k,\dot{\eta}\Big\rangle\\
&=-\sum_{k\in \ZZ} \langle\xi,\phi^l_k\rangle\langle\phi^l_k,\eta\rangle-\sum_{j\geq  l}\sum_{k\in \ZZ}\langle\xi,\psi^j_k\rangle\langle\psi^j_k,\eta\rangle\\
&=-\langle\xi,\sum_{k\in \ZZ} \phi^l_k\langle\phi^l_k,\eta\rangle+\sum_{j\geq l}\sum_{k\in \ZZ}\psi^j_k\langle\psi^j_k,\eta\rangle\rangle\\
&= -\langle\xi,\eta\rangle,
\end{align*}
where the commuting of the serie and $\xi$ is justified by the continuity of $\xi$ in $C^1_0$ and the convergence of the following serie in $C^1_0$,
\[S_N^\eta\defeq\sum_{j=l}^N\sum_{k\in\ZZ}\psi^j_k\langle\psi^j_k,\eta\rangle.\]
Indeed, we have

\begin{align*}
|\langle\psi_k^j,\eta\rangle|&\leq 2^{-j/2}\left|\int \psi(x)\eta(2^{-j}(x+k))\di{x}\right|\\
&\leq 2^{-j/2}\int |\psi(x)||\eta(2^{-j}(x+k))-\eta(2^{-j}k)-\eta'(2^{-j}k)(2^jx)|\di{x}\\
&\leq \norme{\psi}_\infty\norme{\eta'}_\infty 2^{-j/2}2^{-2j},
\end{align*}
where we use the fact that $\int\psi(t)t^k=0$ for an integer $k\leq 1$. This implies that
\begin{align*}
\sum_{j=l}^N\norme{\sum_{k\in\ZZ}\psi^j_k\langle\psi^j_k,\eta\rangle}_{C^1}&\leq \sum_{j=l}^N\norme{\sum_{k\in\ZZ}\psi^j_k\langle\psi^j_k,\eta\rangle}_\infty +\sum_{j=l}^N\norme{\sum_{k\in\ZZ}2^{j}\psi'^j_k\langle\psi^j_k,\eta\rangle}_\infty\\
&\leq \left(\sum_{j=0}^N2^{-2j}+\sum_ {j=0}^N2^{-j}\right )\norme{\psi}_\infty\norme{\eta'}_\infty,
\end{align*}
which proves that $S^\eta_N$ is absolutely convergent in $C^1_0$.

Now by density of $C^2_0$ in $C^1_0$ and the continuity of $\xi$ on $C^1_0$ we conclude that $\langle z,\dot{\eta}\rangle=-\langle\xi,\eta\rangle$ holds for $\eta\in C^1_0$.
\end{proof}

%%%%% ROUGH PATH THEORY %%%%%%%%%%%%%%%%%%%%%
\section{Elements of  rough path theory}
We introduce here the elements of the rough path theory for solving Equation~\eqref{int_rde}. The notions discussed are reformulated in the regularity structure framework in the following sections.  For an extensive introduction the reader can refer to \cite{frizhairer}, and for complete monographs to \cite{qian, friz_victoir}.
\label{sec_elements_rough_path_theory}

%%%%% THE SPACE OF ROUGH PATHS%%%%%%%%%%%%%%%%%%%%%%
\subsection{The space of rough paths}
\label{subsec_roughpath}
Let $W$ be a continuous function from $[0,T]$ to $\RR^n$.

We set $\alpha\in (1/3,1/2]$. 
Then, \eqref{int_rde} has not meaning, because the integral term is not defined.  The main idea of the rough path theory is to define an object $\WW_{s,t}$ which has the same algebraic and analytical properties as $\int_{s}^{t} W_{s,u}\otimes \di{W_u}$, the integral of the increment of the path against itself.

The importance of the iterated integrals can be understood with the classical linear differential equations where the solutions are provided with the exponential function. Indeed, if $W:[0,T]\rightarrow \RR$ is smooth, the solutions of \[\di{y_t}=y_t\di{W_t}\]
are
\[y_{s,t}=\exp(W_{s,t})=1+\int_s^t\di{W_{t_1,s}}+\int_s^t\int_s^{t_1}
\di{W_{t_2,s}}\di{W_{t_1,s}}+\dotsb.\]

\begin{Def}
\label{def_rough_path}
An \emph{$\alpha$-Hölder rough path} with $\alpha\in (1/3,1/2]$ is an ordered pair $\rW \defeq (W,\WW)$\nomenclature{$\rW$}{Rough path associated to $W$} of functions, where $W:[0,T]\rightarrow \RR^n$ and $\WW:[0,T]^2 \rightarrow \RR^n\otimes \RR^n$  such that
\begin{enumerate}
\item For $s,u,t\in [0,T]$, $\WW_{s,t} - \WW_{s,u} -\WW_{u,t}= W_{s,u}\otimes W_{u,t}$ (Chen's relation), i.e., for every $1\leq i,j\leq n$,
$\WW_{s,t}^{i,j} - \WW_{s,u}^{i,j} -\WW_{u,t}^{i,j}= W_{s,u}^iW_{u,t}^j$. 
\item The function $W$ is $\alpha$-Hölder and  $\WW$ is $2\alpha$-Hölder in the sense
$$\norme{\WW}_{2\alpha,T} \defeq \sup_{s\ne t \in [0,T]} \frac{\norme{\WW_{s,t}}}{\abs{t-s}^{2\alpha}} < + \infty.$$
\end{enumerate}
One calls $\WW$ the \emph{second order process}.
We denote by $\rC^\alpha$ the \emph{space of $\alpha$-Hölder rough paths} endowed with the semi-norm 
$$\norme{\rW}_{\alpha,T} = \norme{W}_{\alpha,T} + \norme{\WW}_{2\alpha,T}.$$
\end{Def}

\begin{Rem}
The second order process $\WW_{s,t}$ can be thought of as $\int_{s}^{t} W_{s,u}\otimes \di{W_u}$.
\end{Rem}

\begin{Rem}
The first condition which is called Chen's relation represents the algebraic property of $\int_{s}^{t} W_{s,u}\otimes \di{W_u}$. Indeed, if $W$ is smooth,
 $$\int_{s}^t W_{s,v}^i\dot{W}_v^j\di{v} - \int_{s}^u W_{s,v}^i\dot{W}_v^j\di{v} - \int_{u}^t W_{u,v}^i\dot{W}_v^j\di{v} = W_{s,u}^i W_{u,t}^j$$
for all $1\leq i,j\leq n$ and $0\leq s\leq u\leq t$.
\end{Rem}

\begin{Rem}
The second condition is also an extension of the analytic property of the smooth case.
\end{Rem}

\begin{Rem}
\label{uniq}
If $\WW$ is a second order process of $W$, for any $2\alpha$-Hölder function $F$ taking values in $\RR^n\otimes \RR^n$, $(s,t)\mapsto\WW_{s,t} + F_t-F_s$
 satisfies also the two properties of Definition \ref{def_rough_path}.
So if $\WW$ exists, it is not unique at all.
\end{Rem}

Building $\WW$ from $W$ is  non-trivial  as soon as $n\geq 2$.
\begin{The}
\label{levyArea}
For any $W\in C^\alpha$ with $\alpha\in (1/3,1/2]$ there exists a rough path lift $\WW$, \emph{i.e.} $\rW=(W,\WW)\in \rC^\alpha$ in a way that the map $W\mapsto \rW$ is continuous for the topology defined in Definition~\ref{def_rough_path}.
\end{The}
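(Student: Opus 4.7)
The plan is to follow the regularity structures road map: build a small regularity structure that encodes $W$ and its iterated integral, define a model on it (first for smooth approximations of $W$, then pass to the limit), and use the reconstruction theorem to produce $\WW$ as the integral of a reconstructed distribution.

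First, I would introduce a minimal regularity structure containing the symbols $\mathbf{1}$ (degree $0$), $\Xi_i$ (degree $\alpha-1$, meant to represent the distributional derivative $\dot W^i$), $\mathcal{I}(\Xi_i)$ (degree $\alpha$, representing $W^i-W^i_s$), and the products $\mathcal{I}(\Xi_j)\Xi_i$ (degree $2\alpha-1$, representing the formal pointwise product $(W^j-W^j_s)\dot W^i$), for $1\le i,j\le n$. The structure group is obtained from the usual polynomial translation acting on $\mathcal{I}(\Xi_i)$. This is just enough room to encode Chen's relation at the algebraic level.

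Next, I would take smooth approximations $W^{(\varepsilon)}$ of $W$ (e.g.\ by mollification, or via the wavelet decomposition of Theorem~\ref{def:wavelet}) and define the canonical smooth model $(\Pi^{(\varepsilon)},\Gamma^{(\varepsilon)})$ by
\begin{align*}
\Pi_s^{(\varepsilon)}\Xi_i &= \dot W^{(\varepsilon),i},\\
(\Pi_s^{(\varepsilon)}\mathcal{I}(\Xi_i))(t) &= W^{(\varepsilon),i}_t - W^{(\varepsilon),i}_s,\\
(\Pi_s^{(\varepsilon)}\mathcal{I}(\Xi_j)\Xi_i)(t) &= (W^{(\varepsilon),j}_t - W^{(\varepsilon),j}_s)\,\dot W^{(\varepsilon),i}_t.
\end{align*}
The key step is to verify, through the test-function bounds in Definition~\ref{holder2}, that these models satisfy the model estimates $|\Pi_s^{(\varepsilon)}\tau(\eta_s^\lambda)|\lesssim\norme{W^{(\varepsilon)}}_{\alpha,T}^{k(\tau)}\lambda^{|\tau|}$ uniformly in $\varepsilon$, with a bound that depends continuously (and polynomially, at most quadratically) on $\norme{W^{(\varepsilon)}}_{\alpha,T}$. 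For $\Xi_i$ this comes directly from Lemma~\ref{lem_lien_distribution_fonction} applied to $W^{(\varepsilon),i}$; for $\mathcal{I}(\Xi_i)$ it is Hölder regularity; and for $\mathcal{I}(\Xi_j)\Xi_i$ it reduces, after an integration by parts against $\eta_s^\lambda$, to controlling an increment of $W^{(\varepsilon),i}(W^{(\varepsilon),j}-W^{(\varepsilon),j}_s)$, which is bounded by $\norme{W^{(\varepsilon)}}_{\alpha,T}^2\lambda^{2\alpha}$. A parallel argument applied to differences $W^{(\varepsilon)}-W^{(\varepsilon')}$ shows that $(\Pi^{(\varepsilon)},\Gamma^{(\varepsilon)})$ is Cauchy in the model pseudo-metric, and the limit defines a model $(\Pi,\Gamma)$ depending continuously on $W\in C^\alpha$.

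Third, I would apply the reconstruction theorem (Theorem~\ref{the_reconstruction_map}) to the constant modelled distribution $s\mapsto \mathcal{I}(\Xi_j)\Xi_i$ to obtain a distribution $\Rc(\mathcal{I}(\Xi_j)\Xi_i)\in C^{2\alpha-1}$. Define
\[
\WW^{i,j}_{s,t}\defeq \bigl[\Rc(\mathcal{I}(\Xi_j)\Xi_i)\bigr](\ind_{[s,t]}),
\]
or more rigorously via a smooth cutoff approximating $\ind_{[s,t]}$ (this is exactly where Lemma~\ref{lem_lien_distribution_fonction} enters again, since integrating a $(2\alpha-1)$-Hölder distribution yields a $2\alpha$-Hölder function). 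The reconstruction bound combined with the model estimates then gives $|\WW^{i,j}_{s,t}-(W^j_t-W^j_s)\dot W^i(\text{local})|\lesssim |t-s|^{2\alpha}$, yielding $\norme{\WW}_{2\alpha,T}<\infty$. Chen's relation is immediate because, for smooth $W^{(\varepsilon)}$, the quantity $\WW^{(\varepsilon),i,j}_{s,t}=\int_s^t W^{(\varepsilon),i}_{s,u}\,\dot W^{(\varepsilon),j}_u\,\di u$ satisfies it algebraically, and it is preserved in the limit. Continuity of $W\mapsto\rW$ follows from the continuity of $W\mapsto(\Pi,\Gamma)$ established above composed with the continuity statement in the reconstruction theorem.

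The main obstacle is the third symbol $\mathcal{I}(\Xi_j)\Xi_i$: its naive pointwise interpretation is ill-defined when $\alpha\le 1/2$, and the whole point of the construction is to give it meaning. Breaking this circularity is exactly what the reconstruction theorem achieves, provided one can first establish the uniform model bound for smooth approximations; this estimate, which must only use $\norme{W^{(\varepsilon)}}_{\alpha,T}$ and not higher smoothness, is the single delicate calculation of the proof. Once that bound is in place, everything else is a stability/density argument.
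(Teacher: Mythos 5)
Your proposal takes a genuinely different route from the paper's, but it contains a gap at precisely the point you identify as ``the single delicate calculation,'' and that gap is not a technicality: it is equivalent to the theorem itself.

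Your plan requires the uniform model bound
\[
\bigl|\Pi^{(\varepsilon)}_s\bigl(\mathcal{I}(\Xi_j)\Xi_i\bigr)(\eta^\lambda_s)\bigr| \lesssim \norme{W^{(\varepsilon)}}_{\alpha,T}^2\,\lambda^{2\alpha-1},
\]
uniformly in $\varepsilon$, where $\Pi^{(\varepsilon)}_s(\mathcal{I}(\Xi_j)\Xi_i)=W^{(\varepsilon),j}_{s,\cdot}\dot W^{(\varepsilon),i}$. Integrating this expression by parts against $\eta^\lambda_s$ yields two terms: one of the form $-\int(\eta^\lambda_s)'(t)\,W^{(\varepsilon),j}_{s,t}W^{(\varepsilon),i}_{s,t}\,\di t$, which is indeed bounded by $\norme{W^{(\varepsilon)}}_{\alpha,T}^2\lambda^{2\alpha-1}$, but also a second term $-\int\eta^\lambda_s(t)\,\dot W^{(\varepsilon),j}_t W^{(\varepsilon),i}_{s,t}\,\di t$, which is exactly the quantity you started with, with $i$ and $j$ exchanged. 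So integration by parts only controls the \emph{symmetric} part $A_{ij}+A_{ji}$; the antisymmetric part --- the Lévy area --- remains uncontrolled, and that is the entire content of Theorem~\ref{levyArea}. Worse, the uniform bound you need is in fact false for natural smoothings of a generic $\alpha$-Hölder path when $\alpha\le 1/2$: this is precisely Lejay's area-bubble phenomenon recalled in Remark~\ref{remarkDis}, which shows that mollified Lévy areas need not converge, so $(\Pi^{(\varepsilon)},\Gamma^{(\varepsilon)})$ will generically fail to be Cauchy. The ``stability/density'' argument you invoke at the end cannot close this, because a canonical lift (one depending continuously and linearly on a choice of mollification limit) does not exist.

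The paper sidesteps all of this by working with a strictly smaller regularity structure that has \emph{no} product symbol. Its index set is $\{\alpha-1,0\}$ with model space $\vect{\aWd^i}\oplus\vect{\an}$; the only nontrivial model bound to check is the one on $\aWd$, which is just the statement $W\in C^\alpha$ via Lemma~\ref{lem_lien_distribution_fonction} --- no smooth approximation needed. The would-be product $W^j_{s,\cdot}\dot W^i$ is encoded not as a new symbol but as the modelled distribution $\aWWd(s)=W^i_s\aWd^j$, whose coefficient $W^i_s$ is an honest $\alpha$-Hölder function; this places $\aWWd$ in $D^{2\alpha-1}_{M^e}$ with $\gamma=2\alpha-1\in(\alpha-1,0]$. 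The reconstruction theorem with $\gamma\le 0$ then supplies a (non-unique!) reconstruction $\Rc(\aWWd)\in C^{\alpha-1}$ satisfying the defect bound $|[\Rc(\aWWd)-\Pi^e_s(\aWWd(s))](\eta^\lambda_s)|\lesssim\lambda^{2\alpha-1}$, and $\WW_{s,t}$ is read off from the primitive of $\Rc(\aWWd)$ by subtracting $W_s\otimes W_{s,t}$; Chen's relation is then automatic because the primitive is additive. The non-uniqueness of the reconstruction for negative $\gamma$ is exactly the non-uniqueness of the lift (Remark~\ref{uniq}), and is the reason the theorem can be an existence statement rather than a canonical construction. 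In short: the paper lets the reconstruction theorem do the hard work of ``choosing'' a lift, while your proposal asks you to produce a canonical one by hand, which is impossible in general.
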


\begin{proof}
This result was proved in \cite{victoir}.
We prove of this result in the case $\alpha\in (1/3,1/2]$ in  Section~\ref{sub_rough_lift} as an application of the reconstruction theorem (Theorem~\ref{the_reconstruction_map}).
\end{proof}

%%%%% CONTROLLED ROUGH PATHS%%%%%%%%%%%%%%%%%
\subsection{Controlled rough paths}
\label{subsec_controlledroughpaths}
The aim of this section is to define an integrand against $\rW$, called a controlled rough path  by $W$. This approach was developed by M.~Gubinelli in \cite{gubinelli}. We introduce a function with the same regularity as $W$ which is not differentiable with respect to time but with respect to $W$ itself. This is the concept of the Gubinelli's derivative.

\begin{Def}
\label{contrough}
Let $W$ be in $C^\alpha$, we call a \emph{controlled rough path  by $W$} the pair $(y,y')\in C^\alpha(\RR^d)\times \ahol(\RR^{d\times n})$ such that
\[y_{s,t}=y'_sW_{s,t}+R_{s,t}^y\label{gub},\]
with $\norme{R^y}_{2\alpha,T}<+\infty.$ The function  $y'$ is the \emph{Gubinelli's derivative} of $y$ with respect to $W$.

We denote $\DD^{2\alpha}_W$\nomenclature{$\DD^{2\alpha}_W$}{The controlled rough path space} the \emph{space of the controlled rough paths} $(y,y')$ driven by $W$ endowed with the semi-norm

\[\norme{(y,y')}_{2\alpha,T}^W \defeq \norme{y'}_{\alpha,T}+\norme{R^y}_{2\alpha,T}\label{norme_cont_rough}.\]
\nomenclature{$\norme{\cdot}_{2\alpha,T}^W$}{Semi-norm of the controlled rough path space}
\end{Def}

\begin{Rem}
The identity \eqref{gub} looks like a Taylor expansion of first order
$$f_t = f_s + f'_s(t-s) + O(|t-s|^2),$$
but $(W_t-W_s)$ substitutes the usual polynomial expression $(t-s)$, $y'_s$ the normal derivative and the remainder term is of order $2\alpha$ whereas order $2$. The theory of regularity structures is a deep generalization of this analogy.
\end{Rem}

\begin{Rem}
The Gubinelli's derivative $y'$ is matrix-valued which depends on $y$ and $W$.
\end{Rem}

\begin{Rem}
Unlike the rough path space $\rC^\alpha$ (see Definition \ref{def_rough_path}) which is not a linear space, $\DD^{2\alpha}_W$ is a Banach space with the norm $\norme{y_0}+\norme{y'_0}+\norme{(y,y')}_{2\alpha,T}^W$ or the norm $\norme{y}_{\infty,T}+\norme{y'}_{\infty,T}+\norme{(y,y')}_{2\alpha,T}^W$. These two norms are equivalent.
\end{Rem}

\begin{Rem}
The uniqueness  of $y'$ depends on the regularity of $W$.
If $W$ is too smooth, for example in $C^{2\alpha}$, then $y$ is in $C^{2\alpha}$, and every continuous function $y'$ matches with the definition of the Gubinelli's derivative, particularly $y'=0$. But we can prove that $y'$ is uniquely determined by $y$ when $W$ is  irregular enough.
The reader can refer to the Chapter 4 of \cite{frizhairer} for detailed explanations.
\end{Rem}

%\begin{Rem}
%%It is not trivial to exhibit a non trivial controlled rough path. 
%In application, functions in the space $\DD^{2\alpha}_W$ match with solutions of rough differential equations.
%\end{Rem}
%

%%%%%%% INTEGRATION AGAINST ROUGH PATH %%%%%%%%%%%%%%%%%%%%%%%
\subsection{Integration against rough paths}
\label{subsec_integration}
If $F$ is a linear operator $A$, the differential equation \eqref{rde} can be restated on an integral form as \[y_t = \xi + A\int_0^t y_u\di{W_u}\label{intrde}.\]
To give a meaning to \eqref{intrde}  we must define an integral term
$\int_0^ty_u\di{W_u}$.

 When $W\in C^\alpha$, $y\in C^\beta$ with $\alpha + \beta >1$, we are able to define \eqref{intrde} with  Young's integral. Unfortunately, the solution $y$ of \eqref{intrde} inherits of the regularity of $W$. Hence,  Young's theory  allows us to solve \eqref{intrde} only when $\alpha > 1/2$.

When $\alpha\in (1/3,1/2]$, we need to \textquotedblleft improve\textquotedblright\ the path $W$ in taking into account of $\WW$ in  the definition of the integral. 

%We build the rough integral of a controlled rough path $(y,y')\in \DD^{2\alpha}_W$ against a rough path $(W,\WW)\in \rC^{\alpha}$.

\subsection{Young's integration}
Young's integral was developed by Young in \cite{young} and then used by T.~Lyons in \cite{lyons94} to deal with differential equations driven by a stochastic process.

The integral is defined with a Riemann sum. Let $\cal{P}$\nomenclature{${\cal{P}}$}{A subdivision} be a subdivision of $[s,t]$, we denote by $|\cal{P}|$\nomenclature{$|\cale{P}|$}{The mesh of the subdivision $\cal{P}$} the mesh of $\cal{P}$. We want to define the integral as follows:
$$\int_{s}^t y_u\di{W_u} =\lim_{|{\cal{P}}|\to 0} \sum_{u,v\in \cal{P}} y_u W_{u,v},$$
where $u,v\in \cal{P}$ denotes successive points of the subdivision.
\begin{The}
If $W \in C^\alpha$ and $y\in C^\beta$ with $\alpha + \beta > 1$, $\sum_{u,v\in \cal{P}} y_u W_{u,v}$ converges when $|{\cal{P}}|\to 0$. The limit is independent of the choice of $\cal{P}$, and it is denoted as $\int_s^t y_u\di{W_u}$. Moreover the bilinear map $(W,y)\rightarrow \int_s^t y_u\di{W_u}$ is continuous from $C^\alpha\times C^\beta$ to $C^\alpha$.
\end{The}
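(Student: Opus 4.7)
The plan is to use the classical coarsening argument of Young, which in modern language is a one-dimensional sewing lemma. Fix $[s,t]\subset[0,T]$ and, for a subdivision $\mathcal{P}=\{s=u_0<u_1<\dots<u_N=t\}$, set
$$S(\mathcal{P}) \defeq \sum_{i=0}^{N-1} y_{u_i}\,W_{u_i,u_{i+1}}.$$
I would introduce the germ $\Xi_{s,t}\defeq y_s W_{s,t}$ and measure how $S(\mathcal{P})$ differs from $\Xi_{s,t}$. The key algebraic identity is that removing one interior point $u_i$ from $\mathcal{P}$ modifies $S(\mathcal{P})$ by exactly $-(y_{u_i}-y_{u_{i-1}})W_{u_i,u_{i+1}}$. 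By the Hölder assumptions, the absolute value of this increment is bounded by $\norme{y}_{\beta,T}\norme{W}_{\alpha,T}(u_{i+1}-u_{i-1})^{\alpha+\beta}$, and this is the only quantitative input one ever needs.

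Next, for a partition with $N+1$ points there is always an interior index $i$ with $u_{i+1}-u_{i-1}\leq 2(t-s)/N$, by pigeonhole on the sum of these gaps. Remove such a point, and iterate until only $\{s,t\}$ is left. Summing the resulting errors gives
$$\left| S(\mathcal{P}) - y_s W_{s,t}\right| \leq \norme{y}_{\beta,T}\norme{W}_{\alpha,T}(t-s)^{\alpha+\beta}\sum_{n\geq 2}(2/n)^{\alpha+\beta},$$
where the series converges \emph{because} $\alpha+\beta>1$. This uniform bound is where the hypothesis is used in an essential way.

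From here the rest is routine. To prove convergence of $S(\mathcal{P})$ as $|\mathcal{P}|\to 0$, I would compare two subdivisions $\mathcal{P}_1,\mathcal{P}_2$ to their common refinement $\mathcal{P}_1\cup\mathcal{P}_2$. On each sub-interval of $\mathcal{P}_k$ the refinement differs only by added interior points, so applying the previous bound interval by interval and summing yields $|S(\mathcal{P}_k)-S(\mathcal{P}_1\cup\mathcal{P}_2)| \est \norme{y}_{\beta,T}\norme{W}_{\alpha,T}\,|\mathcal{P}_k|^{\alpha+\beta-1}(t-s)$, which tends to $0$. So the Riemann sums form a Cauchy net; the limit defines $\int_s^t y_u\,\di{W_u}$ and is independent of $\mathcal{P}$. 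Passing to the limit in the key bound yields
$$\left|\int_s^t y_u\,\di{W_u} - y_s W_{s,t}\right| \est \norme{y}_{\beta,T}\norme{W}_{\alpha,T}(t-s)^{\alpha+\beta},$$
which implies simultaneously that the integral (as a function of its upper limit) is $\alpha$-Hölder and, by bilinearity of $(W,y)\mapsto S(\mathcal{P})$ and stability of the estimate under passage to the limit, that the map $(W,y)\mapsto \int_0^\cdot y_u\,\di{W_u}$ is continuous from $C^\alpha\times C^\beta$ to $C^\alpha$.

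The main obstacle is the coarsening step: at each iteration one must select an interior point whose two neighbours are close enough so that the convergent series $\sum n^{-(\alpha+\beta)}$ appears. This is also the precise place where the condition $\alpha+\beta>1$ is indispensable; with $\alpha+\beta\leq 1$ the scheme diverges, and, as recalled in the introduction, the integral truly cannot be defined in a canonical pathwise way, which motivates the rough path enhancement used in the following sections.
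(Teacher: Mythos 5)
Your proof is correct and complete. Note that the paper does not present a proof of this statement at all --- it simply cites \cite{young} --- so there is no in-text argument to compare against; but your coarsening scheme (the ``maximal-gap'' point removal, the $\sum_{n}n^{-(\alpha+\beta)}$ series, the comparison of two partitions to their common refinement, and passage to the limit in the germ estimate) is exactly Young's original argument, and in modern language it is the one-dimensional sewing lemma that the paper invokes later for the controlled rough integral in Theorem~\ref{classic_rough_int} via the citation to \cite[Lemma~4.2]{frizhairer}. The only minor inaccuracies are cosmetic: with $N+1$ points there are $N-1$ interior points, so the pigeonhole gap is $2(t-s)/(N-1)$ rather than $2(t-s)/N$, and the series index should start accordingly; neither affects convergence. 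Also, for the $C^\alpha$-continuity of the indefinite integral you implicitly use $\norme{y}_{\infty,T}$ (to control $|y_s W_{s,t}|$), which is legitimate since the Banach norm on $C^\beta$ adopted in Definition~\ref{holder1} is $\norme{y}_{\infty,T}+\norme{y}_{\beta,T}$; it would be worth saying so explicitly.
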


\begin{proof}
For the original proof cf. \cite{young}.
\end{proof}

Some important properties of the classical Riemann integration holds.
\begin{Pro}

\begin{enumerate}
\item Chasles' relation holds.
\item When $t\to s$ we have the following approximation
\[\int_s^t y_u\di{W_u} = y_sW_{s,t} + O(|t-s|^{\alpha+\beta}). \]
\item The map $t\mapsto \int_s^t y_u\di{W_u}$ is $\alpha$-Hölder continuous.
\item If $F$ is $C^1$, $F(y)$ is $C^\beta$-Hölder and the Young integral $\int_s^tF(y_u)\di{W_u}$ is well-defined as above.
\end{enumerate}
\end{Pro}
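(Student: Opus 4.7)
The plan is to treat the four items in order, leveraging essentially one estimate (a ``Young'' sewing bound) for (2) and then reading off (1), (3), (4) as corollaries. Throughout, fix $W\in C^\alpha$, $y\in C^\beta$ with $\alpha+\beta>1$, and denote $\Xi_{s,t}\defeq y_sW_{s,t}$; the Young integral is the limit of the Riemann sums $\sum_{[u,v]\in\cal{P}}\Xi_{u,v}$.

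First I would establish (2), the first-order expansion, since it is the analytic heart of the statement and it is the ingredient I will recycle for the remaining points. The key observation is the Chen-type identity
\[\Xi_{s,t}-\Xi_{s,u}-\Xi_{u,t}=-(y_u-y_s)(W_t-W_u)=-y_{s,u}W_{u,t},\]
which together with the assumed Hölder regularity gives
\[\abs{\Xi_{s,t}-\Xi_{s,u}-\Xi_{u,t}}\leq \norme{y}_{\beta,T}\norme{W}_{\alpha,T}|u-s|^\beta|t-u|^\alpha\leq \norme{y}_{\beta,T}\norme{W}_{\alpha,T}|t-s|^{\alpha+\beta}.\]
Taking a dyadic refinement of a partition $\cal{P}$ of $[s,t]$ and using the exponent $\alpha+\beta>1$ to sum the geometric series in the refinement index (this is exactly Young's original argument, or equivalently the sewing lemma), one concludes that the Riemann sums are Cauchy, that the limit is independent of $\cal{P}$, and that
\[\Big|\int_s^t y_u\di{W_u}-y_sW_{s,t}\Big|\leq C\norme{y}_{\beta,T}\norme{W}_{\alpha,T}|t-s|^{\alpha+\beta},\]
with a constant $C$ depending only on $\alpha+\beta$. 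I expect this sewing step to be the main technical obstacle; everything else is essentially bookkeeping.

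Chasles' relation (1) I would get by inserting the intermediate point $u\in(s,t)$ into every partition of $[s,t]$ from the outset: the Riemann sums then split cleanly as a sum over partitions of $[s,u]$ and of $[u,t]$, and since the limits exist and are independent of the partition by (2), passing to the limit gives $\int_s^t y\di{W}=\int_s^u y\di{W}+\int_u^t y\di{W}$.

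For (3), the $\alpha$-Hölder continuity of $I(t)\defeq \int_0^t y_u\di{W_u}$, I use Chasles to write $I(t)-I(s)=\int_s^t y_u\di{W_u}$ and then apply (2):
\[\Big|\int_s^t y_u\di{W_u}\Big|\leq \abs{y_s}\norme{W}_{\alpha,T}|t-s|^\alpha+C\norme{y}_{\beta,T}\norme{W}_{\alpha,T}|t-s|^{\alpha+\beta},\]
and since $|t-s|^{\alpha+\beta}\leq T^\beta|t-s|^\alpha$ on $[0,T]$, the right-hand side is bounded by a constant times $|t-s|^\alpha$, proving $I\in C^\alpha$.

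Finally, for (4), I observe that $y\in C^\beta\subset C([0,T])$ has compact image $K\subset\RR^d$, so $F'$ is bounded on $K$; writing $F(y_t)-F(y_s)=\int_0^1 F'(y_s+\tau(y_t-y_s))\di{\tau}\cdot y_{s,t}$ gives
\[\abs{F(y_t)-F(y_s)}\leq \sup_{x\in K}\norme{F'(x)}\cdot \norme{y}_{\beta,T}|t-s|^\beta,\]
so $F(y)\in C^\beta$. Since $W\in C^\alpha$ with $\alpha+\beta>1$, the theorem applies to the pair $(F(y),W)$ and the Young integral $\int_s^t F(y_u)\di{W_u}$ is well-defined as the limit of its Riemann sums, completing the proof.
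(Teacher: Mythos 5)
Your argument is correct, and it fills a gap: the paper states this proposition without any proof, treating the four items as standard facts about Young's integral that follow from the theorem just cited from \cite{young} (and, implicitly, from the sewing lemma of \cite{frizhairer} referenced later). So there is no paper proof to compare against, and your proposal is the natural self-contained one. The structure is sound: the Chen-type identity $\Xi_{s,t}-\Xi_{s,u}-\Xi_{u,t}=-y_{s,u}W_{u,t}$ together with the dyadic-refinement (sewing) bound gives item (2) with an explicit constant depending only on $\alpha+\beta$; item (1) is the partition-independence from the theorem applied along the subnet of partitions containing the chosen intermediate point; item (3) is Chasles plus the bound from (2), using $|t-s|^{\alpha+\beta}\le T^\beta|t-s|^\alpha$ and the fact that $\norme{y}_{\infty,T}<\infty$ because $y$ is continuous on the compact $[0,T]$; and item (4) uses compactness of the image of $y$ to bound $F'$ there, so that $F(y)$ inherits $\beta$-Hölder regularity and the theorem applies to the pair $(F(y),W)$. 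No missing steps, and the constants track through correctly.
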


\begin{Rem}
\label{remarkDis}
Unfortunately with  Young's construction, when $\alpha \leq 1/2$, we can find two sequences of smooth functions $W^{1,n}$ and $W^{2,n}$ converging to $W$ in $C^\alpha$  but such that $\int_s^t F(W^{1,n})\di{W^{1,n}}$ and $\int_s^t F(W^{2,n})_n\di{W^{2,n}}$ converge to two different limits for a smooth function $F$ . See for an example the Lejay's area bubbles in \cite{lejay}.
\end{Rem}

\subsection{Controlled rough path integration}
The rough integral relies on the controlled rough paths introduced previously.
Remark \ref{remarkDis} shows that if $y,W\in C^\alpha$, we cannot define a continuous integral such as $\int_s^ty_u\di{W_u}$ looks like $y_sW_{s,t}$ when $t\to s$. We must use the structure of controlled rough paths to define a \textquotedblleft good\textquotedblright\ integral of $y$ against $W$. Then, given a rough path $\rW\in \rC^\alpha$ and considering  a controlled rough path $(y,y')\in\DD^{2\alpha}_W$ we would like to build an integral $\int_s^t y_u\di{W_u}$ as a good approximation of $y_sW_{s,t} + y'_s\WW_{s,t}$ when $t\to s$.

\begin{The}
\label{classic_rough_int}
For $\alpha\in (1/3,1/2]$, let $\rW=(W,\WW)\in \rC^{\alpha}$ be an $\alpha$-Hölder rough path. Given a controlled rough path driven by $W$ : $(y,y')\in \DD^{2\alpha}_W$ we consider the sum $\sum_{u,v\in \cal{P}} y_uW_{u,v} + y'_{u}\WW_{u,v}$ where $\cal{P}$ is a subdivision of $[s,t]$ ($s\leq t\in [0,T]$). This sum converges when the mesh of $\cal{P}$  goes to 0. We define the integral of $y$ against  $\rW$ as

$$\int_s^t y_u\di{\rW_u} \defeq \lim_{|{\cal{P}}|\to 0} \sum_{u,v\in \cal{P}} y_uW_{u,v} + y'_{u}\WW_{u,v}.$$

The limit exists and does not depend on the choice of the subdivision.
Moreover, the map $(y,y')\rightarrow (t\in [0,T]\mapsto \int_0^ty_u\di{\rW_u},y)$ from $\DD^{2\alpha}_W$ into itself is continuous.
\end{The}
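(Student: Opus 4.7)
The plan is to reduce the convergence of the Riemann sums to a sewing-type estimate. I introduce the germ
\[\Xi_{s,t} \defeq y_s W_{s,t} + y'_s \WW_{s,t},\]
which is precisely the one-step approximation appearing in the sum. Using Chen's relation $\WW_{s,t}-\WW_{s,u}-\WW_{u,t}=W_{s,u}\otimes W_{u,t}$ together with the controlled expansion $y_{s,u}=y'_s W_{s,u}+R^y_{s,u}$, a direct algebraic cancellation yields
\[\delta\Xi_{s,u,t}\defeq \Xi_{s,t}-\Xi_{s,u}-\Xi_{u,t} \;=\; -R^y_{s,u}\,W_{u,t} \;-\; (y'_u-y'_s)\,\WW_{u,t}.\]
Plugging in the Hölder bounds of the four factors ($R^y$ of order $2\alpha$, $W$ of order $\alpha$, $y'$ of order $\alpha$, and $\WW$ of order $2\alpha$) and using $3\alpha>1$ produces the crucial super-additive estimate
\[|\delta\Xi_{s,u,t}| \est \norme{(y,y')}_{2\alpha,T}^W \norme{\rW}_{\alpha,T}\,|t-s|^{3\alpha}.\]

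Next I would run the standard point-removal argument on a subdivision $\mathcal{P}=\{s=t_0<\dotsb<t_n=t\}$. Pigeonhole supplies an interior point $t_i$ with $t_{i+1}-t_{i-1}\leq 2|t-s|/n$; the Riemann sums associated with $\mathcal{P}$ and with $\mathcal{P}\setminus\{t_i\}$ differ precisely by $\delta\Xi_{t_{i-1},t_i,t_{i+1}}$, which the cubic estimate controls by a constant times $n^{-3\alpha}|t-s|^{3\alpha}$. Iterating the removal down to the trivial subdivision $\{s,t\}$ and summing the convergent series $\sum_{n\geq 1}n^{-3\alpha}$ yields
\[\Bigl|\sum_{u,v\in\mathcal{P}}\Xi_{u,v}\;-\;\Xi_{s,t}\Bigr| \est |t-s|^{3\alpha}.\]
Comparing any two subdivisions through their common refinement then makes the Riemann sums Cauchy as $|\mathcal{P}|\to 0$, with a limit independent of the chosen subdivisions; this defines $\int_s^t y_u\,\di{\rW_u}$.

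Finally, passing to the limit in the above estimate gives the refined expansion $\int_s^t y_u\,\di{\rW_u} = y_s W_{s,t} + y'_s \WW_{s,t} + O(|t-s|^{3\alpha})$. Setting $z_t\defeq \int_0^t y_u\,\di{\rW_u}$, this shows that $z_{s,t}-y_s W_{s,t} = O(|t-s|^{2\alpha})$, whence $(z,y)\in\DD^{2\alpha}_W$ with Gubinelli derivative $y$. Because every estimate obtained is bilinear in $\norme{(y,y')}_{2\alpha,T}^W$ and $\norme{\rW}_{\alpha,T}$, the map $(y,y')\mapsto(z,y)$ is continuous from $\DD^{2\alpha}_W$ into itself. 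The main obstacle is the sewing step: converting the super-additive cubic bound on $\delta\Xi$ into an honest additive limit through the removal-and-refinement procedure. Once the algebraic identity for $\delta\Xi_{s,u,t}$ is in hand, both the cubic estimate and the identification of the controlled structure of the integral are routine manipulations.
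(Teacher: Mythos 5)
Your argument is correct, but it is the classical rough-path proof rather than the one the paper is showcasing. The paper explicitly sets aside the sewing-lemma route and re-derives the result in Section~\ref{sec_roughInt} (Theorem~\ref{the_sewing_lemma}) via regularity structures: one multiplies the modelled distribution $\aY(t)=y_t\an+y'_t\aW$ by $\aWd$, checks that $\aY\aWd\in D^{3\alpha-1}_{M^r}$, applies the reconstruction theorem (Theorem~\ref{the_reconstruction_map}) with $\gamma=3\alpha-1>0$ to obtain a unique $\Rc(\aY\aWd)\in C^{\alpha-1}$, recovers $I(\aY)$ as its primitive via Lemma~\ref{lem_lien_distribution_fonction}, and extracts the estimate $|I(\aY)_{s,t}-y_sW_{s,t}-y'_s\WW_{s,t}|\est|t-s|^{3\alpha}$ from the wavelet decomposition. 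Your identity $\delta\Xi_{s,u,t}=-R^y_{s,u}W_{u,t}-(y'_u-y'_s)\WW_{u,t}$, obtained from Chen's relation and the controlled expansion, is exactly the algebraic coherence of the germ that both proofs secretly rely on, and the pigeonhole point-removal argument correctly promotes the $|t-s|^{3\alpha}$ bound (valid since $3\alpha>1$) to convergence of the Riemann sums. Your route is more elementary and self-contained (no wavelets, no reconstruction operator); the paper's route buys the pedagogical point that the sewing lemma is a one-dimensional shadow of the reconstruction theorem, and the same machinery carries over verbatim to the regime $\gamma\le 0$, where the usual sewing lemma no longer applies, to produce the rough-path lift in Section~\ref{sub_rough_lift}. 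One small caveat at the end of your write-up: for a fixed $\rW$ the map $(y,y')\mapsto(z,y)$ is linear, not bilinear, in $(y,y')$, and its boundedness from $\DD^{2\alpha}_W$ into itself requires the full norm $\norme{y}_{\infty,T}+\norme{y'}_{\infty,T}+\norme{(y,y')}_{2\alpha,T}^W$, since the germ $\Xi_{s,t}$ involves the pointwise values $y_s$ and $y'_s$, not only increments.
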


\begin{proof}
The classical proof uses the sewing lemma \cite[Lemma~$4.2$]{frizhairer}. We give a proof with the reconstruction theorem (Theorem~\ref{the_reconstruction_map}) in Section~\ref{sec_roughInt}.
\end{proof}

To solve \eqref{rde}, we need to show that if $F$ is a smooth function, then $F(y_t)$ remains a controlled rough path. The following proposition shows that $(F(y),(F(y))')$ defined by :
\[F(y)_t=F(y_t),\qquad F(y)_t' = F'(y_t)y_t,\label{eq_comp}\]
is a controlled rough path.
\begin{Pro}
Let $F:\RR^d\rightarrow\cL(\RR^n,\RR^d)$ be a function twice continuously differentiable such that $F$ and its derivative are bounded. Given $(y,y')\in \DD^{2\alpha}_W$ let
 $(F(y),F(y)')\in \DD^{2\alpha}_W$ defined as above \eqref{eq_comp}. Then, there is a constant $C_{\alpha,T}$ depending only on $\alpha$ and $T$ such as
$$\norme{F(y),F(y)'}_{2\alpha,T}^{W}\leq C_{\alpha,T}  \norme{F}_{C^2_b}(1+\norme{W}_{\alpha})^2(\norme{y_0'} + \norme{y,y'}_{2\alpha,T}^W)^2,$$
where $\norme{F}_{C^2_b}= \norme{F}_{\infty}+\norme{F'}_\infty +\norme{F''}_\infty.$
\end{Pro}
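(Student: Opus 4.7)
The plan is to verify the definition of $\DD^{2\alpha}_W$ for the pair $(F(y),F(y)')$ directly. Writing $g\defeq F(y)$ and $g'\defeq F'(y)y'$, I need to show that $g'\in \ahol$, that the remainder $R^g_{s,t}\defeq g_{s,t}-g'_sW_{s,t}$ lies in $C^{2\alpha}$, and that both semi-norms are controlled by the right-hand side of the claimed inequality.

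First I would compute the remainder using the trick of adding and subtracting $F'(y_s)(y_t-y_s)$:
\[R^g_{s,t}=\bigl[F(y_t)-F(y_s)-F'(y_s)(y_t-y_s)\bigr]+F'(y_s)R^y_{s,t}.\]
Taylor's theorem bounds the first bracket by $\tfrac{1}{2}\norme{F''}_\infty\abs{y_t-y_s}^2\leq \tfrac{1}{2}\norme{F''}_\infty\norme{y}_{\alpha,T}^2\abs{t-s}^{2\alpha}$, and the second bracket is immediately $\est \norme{F'}_\infty\norme{R^y}_{2\alpha,T}\abs{t-s}^{2\alpha}$. This gives $\norme{R^g}_{2\alpha,T}\est \norme{F}_{C^2_b}\bigl(\norme{y}_{\alpha,T}^2+\norme{R^y}_{2\alpha,T}\bigr)$. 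For the Gubinelli derivative I would split
\[F'(y_t)y'_t-F'(y_s)y'_s=\bigl[F'(y_t)-F'(y_s)\bigr]y'_t+F'(y_s)\bigl[y'_t-y'_s\bigr],\]
whose $\alpha$-Hölder norm is bounded by $\norme{F''}_\infty\norme{y}_{\alpha,T}\norme{y'}_{\infty,T}+\norme{F'}_\infty\norme{y'}_{\alpha,T}$.

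Next I would propagate these intermediate estimates into the quantity $N\defeq\norme{y'_0}+\norme{(y,y')}_{2\alpha,T}^W$. The defining relation $y_{s,t}=y'_sW_{s,t}+R^y_{s,t}$ yields
\[\norme{y}_{\alpha,T}\leq\norme{y'}_{\infty,T}\norme{W}_{\alpha,T}+T^\alpha\norme{R^y}_{2\alpha,T},\]
while the trivial bound $\norme{y'}_{\infty,T}\leq \norme{y'_0}+T^\alpha\norme{y'}_{\alpha,T}$ shows that both $\norme{y}_{\alpha,T}$ and $\norme{y'}_{\infty,T}$ are $\est (1+\norme{W}_{\alpha,T})N$ with constants depending only on $\alpha$ and $T$. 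Substituting these into the two partial estimates and summing produces the factor $(1+\norme{W}_\alpha)^2$ together with the quadratic dependence $N^2$ asserted in the proposition.

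The main obstacle is purely bookkeeping: the only analytic input is a single second-order Taylor expansion, and the rest consists of chasing the controlled rough path norms through the estimates and gathering the correct polynomial in $\norme{W}_\alpha$ and $N$. No use of the rough path $\WW$ is needed at this stage, since Chen's relation and the second-order structure only enter when one integrates $(F(y),F(y)')$ against $\rW$.
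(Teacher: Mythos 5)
Your approach coincides with the paper's: the paper does not prove this Proposition directly (it points to Friz--Hairer and declares it equivalent to Theorem~\ref{def_comp}), and the proof it gives for Theorem~\ref{def_comp} uses exactly your two decompositions --- a second-order Taylor expansion to control the remainder $R^{F(y)}$ and a product-rule split to control $(F(y))' = F'(y)y'$. Your extra bookkeeping step, bounding $\norme{y}_{\alpha,T}$ and $\norme{y'}_{\infty,T}$ in terms of $\norme{W}_{\alpha,T}$ and $N \defeq \norme{y_0'} + \norme{y,y'}_{2\alpha,T}^W$, is the ingredient needed to make the constant explicit and is not carried out in the paper's proof of Theorem~\ref{def_comp}.

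One issue with your last step, though: the two partial estimates you derive give
$$\norme{F(y),F(y)'}_{2\alpha,T}^W \est \norme{F}_{C^2_b}\left((1+\norme{W}_{\alpha,T})^2 N^2 + N\right),$$
and the residual linear term $N$ (coming from $\norme{F'}_\infty\norme{y'}_{\alpha,T}$ in the Gubinelli-derivative bound and from $\norme{F'}_\infty\norme{R^y}_{2\alpha,T}$ in the remainder bound) does not fold into $(1+\norme{W}_{\alpha,T})^2 N^2$ when $N$ is small. Concretely, take $W$ constant and $y$ constant with $y'$ nonconstant and $\norme{y'}_{\alpha,T}$ tiny: then the left-hand side equals $\norme{F'(y_0)}\norme{y'}_{\alpha,T}$, which is linear in $N$, while the stated right-hand side is quadratic. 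So the inequality as printed cannot follow from your (correct) intermediate bounds; the honest conclusion of your argument is the displayed estimate above, which matches the Proposition only after one replaces the last factor by something like $(1 + N)^2$. This is almost surely an imprecision carried over from the reference rather than a flaw in your estimates, but your final sentence claims the literal quadratic form when in fact it does not drop out --- you should either state the weaker bound you actually prove or flag the discrepancy.
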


\begin{proof}
We can find the proof in \cite{frizhairer}. This proposition is equivalent to  Theorem \ref{def_comp}, which is formulated in the regularity structure framework.
\end{proof}

%%%%%%%%%%%%%%% REGULARITY STRUCTURES %%%%%%%%%%%%%%%%%%%%%%%%%%%%%
\section{Regularity structures}
\label{sec_regularitystructures}
\subsection{Definition of a regularity structure}
The theory of  regularity structures was introduced by Martin Hairer in \cite{hairer}. The tools developed in this theory allow us to solve a very wide range of semi-linear partial differential equations driven by an irregular noise.

This theory can be viewed as a generalisation of the Taylor expansion theory to irregular functions. The main idea is to describe the regularity of a function at small scales and then to  reconstruct this function with the reconstruction operator of Theorem~\ref{the_reconstruction_map}.

First we give the definition of a regularity structure.

\begin{Def}
\label{regstr}
A \emph{regularity structure} is a 3-tuple $\TT=(\indexSet,\modspace,\grou)$\nomenclature{$\TT$}{A regularity structure} where
\begin{itemize}
\item The \emph{index set} $\indexSet\subset\RR$\nomenclature{$\indexSet$}{Index set of a regularity structure} is bounded from below, locally finite and such that $0\in \indexSet$.
\item The \emph{model space} $\modspace$\nomenclature{$\modspace$}{Model space of a regularity structure} is a graded linear space indexed by $\indexSet$ : $\modspace=\bigoplus_{\alpha\in \indexSet} \modspace_\alpha$, where each $\modspace_\alpha$ is a non empty Banach space. The  elements of $\modspace_\alpha$ are said of \emph{homogeneity} $\alpha$.
For $\atau\in \modspace$, we denote $\norme{\atau}_\alpha$\nomenclature{$\norme{\atau}_\alpha$}{Norm of the component of $\atau$ in $\modspace_\alpha$} the norm of the component of $\atau$ in $\modspace_\alpha$. Furthermore, $\modspace_0=\vect{\an}$ is isomorphic to $\RR$.
\item The set $\grou$\nomenclature{$\grou$}{Structure group of a regularity structure} is a set of continuous linear operators acting on $\modspace$ such as for $\Gamma\in \grou,$ $\Gamma(\an)=\an$ and $\atau\in \modspace_\alpha$, $\Gamma\atau - \atau \in \bigoplus_{\beta<\alpha} \modspace_\beta.$ The set $\grou$ is called   
 \emph{structure group}.
\end{itemize}
\end{Def}

\begin{Rem}
We underline the elements of the model space for the sake of clarity.
\end{Rem}

\begin{Rem}
We set $m\defeq\min \indexSet$,  $\Gamma\atau=\atau$ for every $\atau\in\modspace_m$.
\end{Rem}

Let us explain the motivations of this definition. The classic polynomial Taylor expansion of order $m\in \NN$ is given, between  $0$ and $t \in\RR$, where $t$ converges to $0$ by

$$f(t) = P(t)+ o(t^m),\quad \mathrm{where}~P(t) =\sum_{k=0}^m \frac{f^{(k)}(0)}{k!}t^k.$$
In this case the approximation $P$ of $f$ is indexed by integers and the space $\modspace$ is the  polynomial space. For all $h\in \RR$, the operator
$\Gamma_h$ associates a Taylor expansion at point $t$ with a Taylor expansion at a point  $t+h$.
The polynomial $\Gamma_h(P(t))-P(t)$ is of order less than $m-1$ :
$$\Gamma_h(P(t))-P(t) \defeq P(t+h)-P(t) = \sum_{k=0}^m \frac{f^{(k)}(0)}{k!}((t+h)^k-t^k).$$

Moreover we have the structure of group on $(\Gamma_h,h\in \RR)$ :
\begin{eqnarray*}
\Gamma_{h'}\circ\Gamma_h P(t)=\Gamma_{h'}\left(\sum_{k=0}^{d-1}\frac{f^{k}(0)}{k!}(t+h)^k\right)
&= &\sum_{k=0}^{d-1}\frac{f^{k}(0)}{k!}\Gamma_{h'}((t+h)^k)\\
&=&\sum_{k=0}^{d-1}\frac{f^{k}(0)}{k!}(t+h+h')^k\\
& = &\Gamma_{h+h'}P(t).
\end{eqnarray*}

Hence, we can define the polynomial regularity structure as following.
\begin{Def}
We define $\TT^1=(\indexSet^1,\modspace^1,\grou^1)$  the \emph{canonical polynomial regularity structure} as
\begin{itemize}
\item $\indexSet^1=\NN$ is the index set.
\item For $k\in \indexSet^1$ we define $\modspace^1_k=\vect{\ab{X^k}}$. The subspace $\modspace^1_k$ contains the monomial of order $k$. The polynomial model space is $\modspace^1=\bigoplus_{k\in \indexSet}\modspace^1_k$.
\item For $h\in\RR$, $\Gamma^1_h\in \grou^1$ is given by
$$\Gamma^1_h(\ab{X^k})=(\aX+h\an)^{k}.$$

For $P_k\in \modspace^1_k$, there is $a_k\in \RR$ such that $P_k=a_k\ab{X^k}$. We define the norm on $\modspace^1_k$ by $\norme{P_k}_k=|a_k|$.
\end{itemize}
\end{Def}

With the same arguments we define the polynomial regularity structure and its model associated in $\RR^n$.
\begin{Def}
\label{def_poly}
We define $\TT^p=(\indexSet^p,\modspace^p,\grou^p)$  the \emph{canonical polynomial regularity structure} on $\RR^n$  as
\begin{itemize}
\item $\indexSet^p=\NN$ is the index set.
\item For $\delta\in \indexSet^p$, and $k$ a multi-index of $\NN^n$ such that
$|k|\defeq k_1+\dots+k_n=\delta $, we define $T_\delta^p=\vect{{\aX^k}\defeq\prod_{i=1}^n{\aX^{k_i}_i}, |k|=\delta}$. This space $T^p_k$ is a linear space of homogeneous polynomial with $n$ variables and of order $\delta$. For $P_\delta\in T_\delta^p,$ there are real coefficients
$(a_k)_{|k|=\delta}$ such that $P_\delta=\sum_{|k|=\delta}a_k \aX^k.$
We chose the norm on $T_\delta^p$ such that $\norme{P_{\delta}}_\delta\defeq \sum_{|k|=\delta}|a_k| $. 

We define $\modspace^p=\bigoplus_{\delta\in A}T^p_\delta$ as the polynomial model space.
\item For $h\in\RR^n$, $\Gamma^p_h\in G^p$ is given by
$$\Gamma^p_h({\aX^k})=\prod_{i=1}^n({\aX_i}+h_i\an)^{k_i}.$$
\end{itemize}
\end{Def}

\begin{Rem}
The polynomial regularity structure is a trivial example of regularity structure which we introduce for a better understanding. But the strength of this theory is to deal with negative degree of homogeneity. 
\end{Rem}

\subsection{Definition of a model}

\begin{Def}
\label{model}
Given a regularity structure $\TT = (\indexSet,\modspace,\grou)$, a \emph{model} $M=(\Pi,\Gamma)$\nomenclature{$M$}{Model associated to a regularity structure} is two sets of functions such that for any $s,t,u\in \RR$
\begin{itemize}
\item The operator $\Pi_s$ is  continuous and linear from $\modspace$ to ${\cal{D}}'(\RR,\RR^n)$.
\item $\ga$ belongs to $\grou$, so it is a linear operator acting on $\modspace$.
\item The following algebraic relations hold: $\Pi_s\Gamma_{s,t}=\Pi_t$ and $\Gamma_{s,t}\Gamma_{t,u}=\Gamma_{s,u}$.
\item The following analytic relations hold : for every $\gamma >0$, $\beta <\alpha\leq \gamma$ with $\alpha,\beta\in \indexSet$ and $\atau\in \modspace_\alpha$,   there  is a constant $C(T,\gamma)$ uniform over $s,t\in [0,T]$, $\lambda\in (0,1]$,  $\phi\in B_r$ such that
\begin{align}
\nonumber
|\Pi_{s}(\atau)(\phi_{s}^\lambda)|&\leq C(T,\gamma)\lambda^\alpha\norme{\atau}_{\alpha}\\
\mathrm{~and~} \norme{\Gamma_{s,t}(\atau)}_{\beta}&\leq C(T,\gamma) |t-s|^{\alpha-\beta}\norme{\atau}_{\alpha} \label{def_model_ineq}.
\end{align}

We denote respectively by  $\norme{\Pi}_{\gamma,T}$ and $\norme{\Gamma}_{\gamma,T}$  the smallest constants such that the bounds \eqref{def_model_ineq} hold. Namely,
\begin{align*}
&\norme{\Pi}_{\gamma,T} \defeq \sup_{s\in [0,T]}\sup_{\phi\in B_r}\sup_{\lambda\in (0,1]}\sup_{\alpha<\gamma}\sup_{\atau\in T_{\alpha}} \frac{\abs{\Pi_{s}(\atau)(\phi_{s}^\lambda)}_{\alpha}}{\lambda^\alpha\norme{\atau}_{\alpha}}\\
\mathrm{and~~}&\norme{\Gamma}_{\gamma,T} \defeq \sup_{s\ne t \in [0,T]}\sup_{\beta<\alpha<\gamma}\sup_{\atau\in T_\alpha} \frac{\norme{\Gamma_{s,t}(\atau)}_{\beta}}{|t-s|^{\alpha-\beta}\norme{\atau}_{\alpha}}.
\end{align*}
 The two operators $\norme{\cdot}_{\gamma,T}$ define semi-norms.

\end{itemize}
\end{Def}

The easiest regularity structure which we can describe is the polynomial one (see Definition \ref{def_poly}). We can now define the model associated to this regularity structure.

\begin{Def}
Given that $\TT^p=(\indexSet^p,\modspace^p,\grou^p)$\nomenclature{$\TT^p$}{Polynomial regularity structure}  the canonical polynomial regularity structure on $\RR^n$ defined in the Definition \ref{def_poly}, we define the \emph{ model of the polynomial regularity structure} $M^p=(\Pi^p,\Gamma^p)$\nomenclature{$M^p$}{Model of the polynomial regularity structure} such that
 for all $x, y\in \RR^n$ and $k$ a multi-index of order $n$,
 $$\Pi^p_x({\aX^k})(y)\defeq ((y_1-x_1)^{k_1},\dots,(y_n-x_n)^{k_n}),$$
 $$\Gamma^p_{x,y}({\aX^k})\defeq\Gamma_{x-y}({\aX^k}).$$
\end{Def}

\begin{proof}
It is straightforward to check that this definition is in accordance with the one of  a model (Definition~\ref{defMod} below).
\end{proof}

\begin{Rem}
The operator $\Pi_s$ which associates to an element of the abstract space a distribution which approximates this element in $s$.
Typically for polynomial regularity structure on $\RR$,
$$\Pi^p_s(\ab{X^k})=(t\rightarrow (t-s)^k).$$
\end{Rem}

\begin{Rem}
In the model space, the operator $\Gamma_{s,t}$ gives an expansion in a point $s$, given an expansion in a point $t$. For example
\[\label{gamma}\Gamma^p_{s,t}(\ab{X^k})=\Gamma^p_{s-t}(\ab{X^k})=(\ab{X}+(s-t)\an)^k.\]
\end{Rem}

\begin{Rem}
The first algebraic relation means that if a distribution looks like $\atau$ near $t$, the same distribution looks like $\Gamma_{s,t}(\atau)$ near $s$. In practice, we use this relation to find the suitable operator $\ga$.
The second algebraic relation is natural. It says that moving an expansion from $u$ to $s$ is the same as moving an expansion from $u$ to $t$ and then from $t$ to $s$. 
\end{Rem}

\begin{Rem}
The first analytic relation has to be understood as $\Pi_s$ approximating $\atau\in T_\alpha$  in $s$ with the precision $\lambda^\alpha$. The relation \eqref{gamma} shows that the second analytic relation is natural. Indeed,
$$(\ab{X}+(t-s)\an)^k = \sum_{i=0}^k \binom{k}{i}(t-s)^{k-i}\ab{X^i},$$
so for $\ell\leq k$, $\norme{\Gamma^p_{s,t}(\ab{X^k})}_\ell = \binom{k}{i}\abs{t-s}^{k-\ell}$, where $\binom{k}{i}=\frac{k!}{i!(k-i)!}$ are the binomial coefficients.
\end{Rem}

%%%%%% THE ROUGH PATH REGULARITY STRUCTURE %%%%%%%%%%%%%%%%%%%%%%%%%%%%%%%%%%%%%%
\subsection{The rough path regularity structure}
\label{roughstru}
We now reformulate the results of  Subsections \ref{subsec_roughpath} and \ref{subsec_controlledroughpaths} to build up a regularity structure.

In order to find the regularity structure of rough paths, we make some computations for $n=1$. Then, we give the proof in the general case after Definition \ref{roughStruDef}.

We fix $\alpha\in(1/3,1/2]$ and a rough path $\rW=(W,\WW)\in \rC^\alpha$. We show how to build the regularity structure of rough paths.

Let $(y,y')\in\DD^{2\alpha}_W$ be a controlled rough path.
According to  Definition~\ref{contrough},  $y_t=y_s+y'_sW_{s,t}+O(|t-s|^{2\alpha})$.
To describe the expansion of $y$ with the regularity structure framework, we set the symbol $\an$ constant of homogeneity $0$ and the symbol $\aW$ of homogeneity $\alpha$.
This leads us to define the elements of the regularity structure of the controlled rough path $(y,y')$ evaluated at time $t$ by
$$\aY(t) = y_t\an + y'_t\aW.$$

Moreover, we would like to build the rough path integral $\int y\di{\rW}$ in the regularity structure context. So we introduce abstract elements $\aWd$ and $\aWWd$ which \textquotedblleft represent\textquotedblright\ $\di{\rW}=\di{(W,\WW)}$. The function $W$ is $\alpha$-Hölder, so we define the homogeneity of $\aWd$ as $\alpha-1$. The second order process $\WW$ is $2\alpha$-Hölder, which leads us to define the homogeneity of $\aWWd$ as $2\alpha -1$.

Finally, with the notation of  Definition \ref{regstr},
$A = \{\alpha-1, 2\alpha-1,0,\alpha\}$, $\modspace=\vv\langle\aWd,\aWWd,\an,\aW\rangle$. Besides, we order the elements in $\vv\langle\cdot\rangle$ by homogeneity. 

It remains to define $\grou$ and an associated model. We start by building the model $(\Pi,\Gamma)$. For $s\in[0,T]$, $\Pi_s$ should transform the elements of $\modspace$ to distributions (or functions when it is possible) which approximate this elements at the point $s$. On the one hand we define 
$$\Pi_s(\aWd)(\phi) \defeq \int \phi(t) \di{W_t},\qquad \Pi_s(\aWWd)(\phi) \defeq \int \phi(t) \di{\WW_{s,t}},$$
where $\phi$ is a test function. Both integrals are well-defined because $\phi$ is smooth. The homogeneities of $\aWd$ and $\aWWd$ are negative, so they are mapped with distributions. On the other hand, $\an$ and $\aW$ have  positive homogeneities, so we can approximate them in $s$ with functions
as
$$\Pi_s(\an)(t) \defeq 1,\qquad \Pi_s(\aW)(t) \defeq W_{s,t}.$$

Now, we define $\Gamma_{s,t}(\atau)$ for every $\beta\in A$ and $s,t\in [0,T]$ and $\atau\in \modspace_\beta$ . According to Definition  \ref{model} : $\Pi_s\Gamma_{s,t}(\atau)(\phi) = \Pi_t(\atau)(\phi)$. Moreover, following  Definition \ref{regstr}, $\Gamma_{s,t}$ should be a linear combination of elements of homogeneity lower than $\atau$ and with the coefficient $1$ in front of $\atau$. First, it seems obvious to set $\Gamma_{s,t}(\an)= \an$, because $\an$ represents a constant. Then we look for $\Gamma_{s,t}(\aW)=\aW + a_{s,t}\an$ as a function where $a_{s,t}$ has to be determined. If it is not enough, we would look for $\Gamma_{s,t}(\aW)$ with more elements of our structure~$T$.
By linearity $$\Pi_s(\aW+a_{s,t}\an)(u)= W_{s,u}+a_{s,t},$$ so we want that $W_{s,u}+a_{s,t} = \Pi_t(\aW)(u)=W_{t,u}.$ Finally, we have to choose $a_{s,t}=W_{t,s}$. Given that $\aWd$ has the lowest homogeneity of our structure, we set $\Gamma_{s,t}(\aWd)=\aWd$ in order to respect the last item of  Definition \ref{regstr}. With the same reason as for $\aW$ and using the Chen's relation of Defintion~\ref{def_rough_path}, we find that $\Gamma_{s,t}(\aWWd)=\aWWd + W_{t,s}\aWd$ (see the proof of Definition~\ref{roughStruDef}).

All we did here is in one dimension.
With the same arguments we can find the regularity structure of a rough path in $\RR^n$. 

\begin{Def}
\label{roughStruDef}
For $\alpha\in (1/3,1/2]$, given a rough path $\rW=(W,\WW)\in \rC^\alpha$ which take value in $\RR^n\bigoplus\left( \RR^n\otimes\RR^n\right)$. We define the \emph{regularity structure of rough paths} $\TT^r=(\indexSet^r,\modspace^r,\grou^r)$\nomenclature{$\TT^p$}{Regularity structure of the rough paths} and the model associated $M^r=(\Pi^r,\Gamma^r)$\nomenclature{$\TT^p$}{Model associated to the regularity structure of the rough paths} as
\begin{enumerate}[i)]
\item \label{enu:index}Index set $\indexSet^r\defeq\{\alpha-1,2\alpha-1,0,\alpha\}.$
\item \label{enu:mod_space} Model space $\modspace^r\defeq\modspace^r_{\alpha-1}\bigoplus\modspace^r_{2\alpha-1}\bigoplus
\modspace^r_0\bigoplus\modspace^r_\alpha,$
with $$\modspace^r_{\alpha-1}\defeq\vect{\aWd^i,i=1,\cdots,n},\quad \modspace^r_{2\alpha-1}\defeq\vect{\aWWd^{i,j},i,j=1,\cdots,n},$$
$$\modspace^r_{0}\defeq\vect{\an},\quad \modspace^r_{\alpha}\defeq\vect{\aW^i,i=1,\cdots,n}.$$
\item \label{enu:Gamma}For $i,j$  integers between $1$ and $n$, $h\in\RR^n$ and  $\Gamma^r_h$ in the structure group $\grou^r$, the following relations hold 
$$\Gamma^r_h(\ab{\dot{W}^i}) \defeq \ab{\dot{W}^i},\qquad \Gamma^r_h(\ab{\dot{\WW}^{i,j}})\defeq\ab{\dot{\WW}^{i,j}}+h^i\ab{\dot{W}^j} ,$$
$$\Gamma^r_h(\an)\defeq\an,~\mathrm{and}\quad \Gamma^r_h(\ab{W^i})\defeq\ab{W^i}+h^i\an.$$
\item \label{enu:Pi} For $i,j$ two integers between $1$ and $n$, for $s,t\in [0,T],$
$$\Pi^r_s(\ab{\dot{W^i}})(\phi) \defeq \int \phi(t)\di{W^i}_t,\qquad \Pi^r_s(\ab{\dot{\WW}^{i,j}})(\phi)\defeq\int \phi(t)\di{\WW^{i,j}_{s,t}},$$
$$\Pi^r_s(\ab{1})(t)\defeq 1,\qquad \Pi^r_s(\ab{W^i})(t)\defeq W_{s,t}^i,$$
where $\phi$ is a test function.
\item \label{enu:gamma_s_t}For $s,t\in \RR$, $\Gamma^r_{s,t}\defeq \Gamma^r_{|h=W_{t,s}}.$
\end{enumerate}
\end{Def}

\begin{proof}
Checking that this definition respects the definitions of a regularity structure (Definition \ref{regstr}) and of a model (Definition \ref{model}) is straightforward. 

Here we only show where Chen's relation of  Definition \ref{def_rough_path} is fundamental to show that the algebraic condition of  Definition \ref{model} : $\Pi^r_s\Gamma^r_{s,t}\aWWd=\Pi^r_t\aWWd$ holds.

According to the definition above $\Gamma^r_{s,t}\aWWd^{i,j} = \ab{\dot{\WW}^{i,j}}+h^i\ab{\dot{W}^j}$. So we have
\[\Pi^r_s(\Gamma^r_{s,t}\aWWd^{i,j})(\phi)=\int \phi(u)\di{\WW^{i,j}_{s,u}}+W_{t,s}^i\int \phi(u)\di{W_{s,u}^j}.\]
In differentiating Chen's relation  $\WW^{i,j}_{s,u}=\WW^{i,j}_{s,t}+\WW^{i,j}_{t,u}+W_{s,t}^iW_{t,u}^j$ with respect to $u$ we get
$\di{\WW^{i,j}_{s,u}}=\di{\WW^{i,j}_{t,u}}+W_{s,t}^i\di{W_{t,u}^j}$.
It follows that
\[\Pi^r_s(\Gamma^r_{s,t}\aWWd^{i,j})(\phi)=\int \phi(u)\di{\WW^{i,j}_{t,u}}+W_{s,t}^i \int \phi(u)\di{W^j_{s,u}}+W_{t,s}^i\int \phi(u)\di{W_{s,u}^j}.\]
Finally $\Pi^r_s(\Gamma^r_{s,t}\aWWd^{i,j})(\phi)=\int \phi(u)\di{\WW^{i,j}_{t,u}}=\Pi^r_t\aWWd$, which is the algebraic condition required. 
\end{proof}

%%%%%%%%%% THE SPACE OF MODELLED DISTRIBUTION AND THE RECONSTRUCTION OPERATOR %%%%%%%%%%%%%%%%%%%%%%%%%%%%
\section{Modelled distributions}
\label{sec_modelledDistribution}
\subsection{Definition and the reconstruction operator}
We have defined a regularity structure. We now introduce the space of functions from $[0,T]$ to $\modspace$, the \emph{model space} of a regularity structure. These abstract functions should represent at each point of $[0,T]$, a \textquotedblleft Taylor expansion\textquotedblright\ of a real function.

We showed in Section \ref{roughstru} how to build an abstract function $\aY(t) = y_t\an + y_t'\aW$  which represents the expansion of a real controlled rough path $(y,y')$ at a point $t$. The most important result of the theory of regularity structures is to show how to build a real function or distribution from an abstract function. Namely, given an approximation of a function at each time, how to reconstruct \textquotedblleft continuously\textquotedblright\ the function. This is given by the \emph{reconstruction map} theorem.

\begin{Def}
\label{defMod}
Given a regularity structure $(\indexSet,\modspace,\grou)$  and a model $M=(\Pi,\Gamma)$, for $\gamma\in\RR$ we define the space $\model$\nomenclature{$\model$}{Modelled distributions of regularity $\gamma$} of \emph{modelled distributions} as functions $\af : [0,T]\rightarrow \modspace_{<\gamma}\defeq\bigoplus_{\beta<\gamma}\modspace_\beta$
\nomenclature{$\modspace_{<\gamma}$}{Elements of $\modspace$ of homogeneity strictly less than $\gamma$} such that for all $s,t\in [0,T]$ and for all $\beta<\gamma,$
$$\norme{\af(s)-\Gamma_{s,t}(\af(t))}_{\beta}\leq C(T)\abs{t-s}^{\gamma-\beta},$$
where $C(T)$ is a constant which depends only on $T$.

Recalling that $\norme{\cdot}_\beta$ is the norm of the component in $\modspace_\beta$, we define by $$\norme{\af}_{\gamma,T}\defeq\sup_{s\ne t\in [0,T]}\sup_{\beta<\gamma} \frac{\norme{\af(t)-\ga(\af(s))}_{\beta}}{\abs{t-s}^{\gamma-\beta}} $$ a semi-norm on the space $D^\gamma_M$. It is also possible to consider the norm
 $$\norme{ \af}_{\gamma,T}^*\defeq\sup_{t\in  [0,T]}\sup_{\beta<\gamma}\norme{\af(t)}_{\beta} + \norme{\af}_{\gamma,T}.$$\nomenclature{$\norme{ \af}_{\gamma,T}^*$}{Norm of the modelled distribution space}
  Moreover $\norme{\cdot}^*_{\gamma,T}$ is equivalent to  $$\sup_{\beta<\gamma}\norme{\af(0)}_{\beta} + \norme{\af}_{\gamma,T},$$ so from now we  use these two norms without distinction.

\end{Def}

\begin{Rem}
For a fixed model $M$, the modelled distributions space $D^{\gamma}_M$ is a Banach space with the norm~$\norme{~}_{\gamma,T}^*$. 
\end{Rem}

\begin{Rem}
We choose the same notation  for the semi-norm on $D^{\gamma}_M$ as on $\DD^\gamma_W$ (the space of modelled distributions and on $C^\alpha$ (the space of Hölder functions or distributions).

So when $\af\in D^\gamma_M$, we have to understand $\norme{\af}_{\gamma,T}$ with Definition \ref{defMod} but when $f\in C^\alpha$, $\norme{f}_{\alpha,T}$ is the Hölder norm of Definition \ref{holder1} (for functions $\alpha>0$) or \ref{holder2} (for distributions $\alpha<0$).
\end{Rem}

\begin{Rem}
The modelled distribution space $D^\gamma_M$ can be thought of as abstract $\gamma$-Hölder functions. Indeed, for an integer $p$  and $\delta\in [0,1)$ such that $\gamma=p+\delta$, if $f$ is a smooth function
$$\left|f(x) - \sum_{k=0}^p \frac{f^{(k)}(y)}{k!}(y-x)^k\right|\leq C \abs{t-s}^\delta,$$
according to the Taylor's inequality. Hence, Definition \ref{defMod} of modelled distributions has to be seen as an extension of the Taylor inequality in a no classical way.
\end{Rem}

Now we are able to outline the main theorem of the theory of  regularity structures which given a modelled distribution allows us to build a \textquotedblleft real\textquotedblright\ distribution approximated at each point by the modelled distribution.

\begin{The}[Reconstruction map]
\label{the_reconstruction_map}
Given a regularity structure $\TT=(\indexSet,\modspace,\grou)$ and a model $M=(\Pi,\Gamma)$, 
for a real $\gamma$\nomenclature{$\Rc$}{The reconstruction operator} $>$ $\alpha_* = \min{\indexSet}$ and an integer $r>|\alpha_*|$ there is a linear continuous map $\Rc:D^\gamma_M\rightarrow C^{\alpha_*}$ such that for all $\af\in D^{\gamma}_M$,
\[\abs{\left[\Rc(\af)-\Pi_s(\af(s))\right](\phi_s^\lambda)}\leq C\norme{\Pi}_{\gamma,T}\norme{\af}_{\gamma,T}^* \lambda^\gamma,\label{eq:estimation_reconstruction_theorem}\]
where  $C$ depends uniformly over $\phi\in B_r$, $\lambda\in (0,1]$, $s\in [0,T]$.

Moreover if $\gamma>0$, the bound \eqref{eq:estimation_reconstruction_theorem} defined $\Rc(\af)$ uniquely.

If $(\tilde{\Pi},\title{\Gamma})$ is an other model for $\TT$ and $\tilde{\Rc}$ the reconstruction map associated to the model, we have the bound
\begin{align}
\nonumber
&|\Rc(\af)-\tilde{\Rc}(\tilde{\af})-\Pi_s(\af(s))+\tilde{\Pi}_s(\tilde{\af}(s))](\eta_s^\lambda)|\\&\leq C \left(\norme{\tilde{\Pi}}_{\gamma,T}\norme{f-\tilde{\af}}_{\gamma,T}^*+\norme{\Pi-\tilde{\Pi}}_{\gamma,T}
\norme{\af}_{\gamma,T}^*\right)\lambda^\gamma\label{eq:diff_R_Rtilde},
\end{align}
where  $C$ depends uniformly over $\phi\in B_r$, $\lambda\in (0,1]$, $s\in [0,T]$, as above.
\end{The}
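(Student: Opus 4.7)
The plan is to construct $\Rc(\af)$ via a wavelet approximation scheme, in the spirit of the proof of Lemma~\ref{lem_lien_distribution_fonction}. Fix the basis $\{\phi_k^n,\psi_k^j\}$ of Theorem~\ref{def:wavelet} and set $x_k^n\defeq 2^{-n}k$. For each integer $n$ I define the smooth approximant
\[
\Rc_n(\af) \defeq \sum_{k\in\ZZ} \bigl(\Pi_{x_k^n}\af(x_k^n)\bigr)(\phi_k^n)\, \phi_k^n,
\]
and take $\Rc(\af)$ to be the limit (in a suitable distributional topology) obtained by adding the wavelet corrections $\Rc_{n+1}(\af)-\Rc_n(\af)$ for $n\geq 0$.

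The decisive step is to bound the wavelet coefficients $a_k^n\defeq \langle \Rc_{n+1}(\af)-\Rc_n(\af),\psi_k^n\rangle$. Using the refinement relation that expresses $\phi_k^{n+1}$ in terms of $\phi^n$ and $\psi^n$, one rewrites (up to harmless combinatorial constants)
\[
a_k^n \;=\; \bigl(\Pi_{x_k^n}\af(x_k^n) - \Pi_{x_{k'}^{n+1}}\af(x_{k'}^{n+1})\bigr)(\psi_k^n)
\]
for suitable nearby points. Applying the algebraic identity $\Pi_{x_{k'}^{n+1}} = \Pi_{x_k^n}\Gamma_{x_k^n,x_{k'}^{n+1}}$, the expression becomes $\Pi_{x_k^n}\bigl(\af(x_k^n)-\Gamma_{x_k^n,x_{k'}^{n+1}}\af(x_{k'}^{n+1})\bigr)(\psi_k^n)$, and the two analytic bounds of Definition~\ref{model} combined with the modelled distribution inequality $\norme{\af(s)-\Gamma_{s,t}\af(t)}_\beta\est |t-s|^{\gamma-\beta}$ yield, component by component in $\modspace_\beta$,
\[
|a_k^n| \;\est\; \sum_{\beta<\gamma} 2^{-n\beta-n/2}\cdot 2^{-n(\gamma-\beta)} \;=\; C\, 2^{-n\gamma} 2^{-n/2}\norme{\Pi}_{\gamma,T}\norme{\af}_{\gamma,T}^*.
\]
Note that the positive exponents $\alpha$ in $\Pi$ and the negative exponents compensate precisely, so only the total order $\gamma$ survives.

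With this coefficient bound in hand, convergence of $\Rc_n(\af)$ in $C^{\alpha_*}$ is checked by testing against a localised function $\eta_s^\lambda$ and choosing $n_0$ with $2^{-n_0}\simeq \lambda$: the contribution from scales $j\leq n_0$ is controlled by a pointwise estimate on $\sum_k |\psi_k^j|$ (finite because $\psi$ is compactly supported) giving a geometric series in $2^{j(1-\gamma)}\lambda$, while scales $j>n_0$ are bounded by $\sum_k |a_k^j|\cdot\norme{\psi_k^j}_\infty\lambda$ and sum to a power of $\lambda^\gamma$. The same splitting, applied to $[\Rc(\af)-\Pi_s\af(s)](\eta_s^\lambda)$ after inserting the auxiliary term $\Rc_{n_0}(\af)(\eta_s^\lambda)$ and moving the base point from $x_k^{n_0}$ to $s$ via $\Gamma$, gives exactly \eqref{eq:estimation_reconstruction_theorem}. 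Uniqueness when $\gamma>0$ is immediate: the difference of two candidates would be a distribution $\xi$ with $|\xi(\eta_s^\lambda)|\est\lambda^\gamma$ and $\gamma>0$ forces $\xi=0$. Finally, \eqref{eq:diff_R_Rtilde} is obtained by running the construction simultaneously for both models and splitting telescopically $\Pi\af-\tilde\Pi\tilde\af = (\Pi-\tilde\Pi)\af + \tilde\Pi(\af-\tilde\af)$.

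The main obstacle is precisely the coefficient estimate sketched above: one must juggle the $\alpha$-dependence of the analytic bound $|\Pi_s(\atau)(\phi_s^\lambda)|\est\lambda^\alpha$, the $(\alpha-\beta)$-dependence of $\norme{\Gamma_{s,t}\atau}_\beta\est |t-s|^{\alpha-\beta}$, and the modelled distribution property, and verify that for every homogeneity $\beta$ (possibly negative, so that $2^{-n\beta}$ grows with $n$) the gain $2^{-n(\gamma-\beta)}$ coming from the definition of $D^\gamma_M$ always wins. This is the reason the hypothesis $\gamma>\alpha_*$ is exactly what is needed, and the condition $r>|\alpha_*|$ on the test class ensures that pairing $\Pi_s(\atau)$ with wavelets of regularity $r$ is legitimate.
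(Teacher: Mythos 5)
Your construction of $\Rc(\af)$ through the partial sums $\Rc_n(\af)=\sum_k(\Pi_{x_k^n}\af(x_k^n))(\phi_k^n)\phi_k^n$ is exactly the one sketched in the paper (and carried out in full in \cite{hairer} and \cite{frizhairer}), but the pivotal coefficient bound is wrong as stated. With $a_k^n\defeq\langle\Rc_{n+1}(\af)-\Rc_n(\af),\psi_k^n\rangle$, the identity $a_k^n=\bigl(\Pi_{x_k^n}\af(x_k^n)-\Pi_{x_{k'}^{n+1}}\af(x_{k'}^{n+1})\bigr)(\psi_k^n)$ silently drops a non-small term. Since $\Rc_n(\af)\in V_n\perp\psi_k^n$, one has $a_k^n=\langle\Rc_{n+1}(\af),\psi_k^n\rangle$; expanding $\psi_k^n$ in the basis $\{\phi_{k'}^{n+1}\}$ and inserting $\Pi_{x_{k'}^{n+1}}=\Pi_{x_k^n}\Gamma_{x_k^n,x_{k'}^{n+1}}$ gives
\[
a_k^n=\bigl(\Pi_{x_k^n}\af(x_k^n)\bigr)(\psi_k^n)+\sum_{k'}\langle\psi_k^n,\phi_{k'}^{n+1}\rangle\,\Pi_{x_k^n}\bigl(\Gamma_{x_k^n,x_{k'}^{n+1}}\af(x_{k'}^{n+1})-\af(x_k^n)\bigr)(\phi_{k'}^{n+1}).
\]
Only the second sum obeys your $O(2^{-n\gamma-n/2}\norme{\Pi}_{\gamma,T}\norme{\af}^*_{\gamma,T})$ estimate; the first term is of genuine size $2^{-n\alpha_*-n/2}$ whenever $\af$ has a component of homogeneity $\alpha_*$, which is strictly worse than $2^{-n\gamma-n/2}$ since $\alpha_*<\gamma$. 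Concretely, in the rough-path structure take the constant modelled distribution $\af\equiv\aWd$: then $\norme{\af}_{\gamma,T}=0$ for every $\gamma$, $\Rc_n(\af)$ is the $V_n$-projection of $\dot W$, and $a_k^n=\dot W(\psi_k^n)\sim 2^{-n(\alpha-1)-n/2}$, contradicting your bound for every $\gamma>\alpha-1=\alpha_*$.

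The quantity that admits the $2^{-n\gamma-n/2}$ bound, and that your later steps actually need, is the recentered coefficient $a_k^n-\bigl(\Pi_{x_k^n}\af(x_k^n)\bigr)(\psi_k^n)$ — equivalently, one must keep the subtraction $\Rc_{n}(\af)-\Pi_s\af(s)$ inside the wavelet decomposition from the start rather than bounding $\Rc_{n+1}(\af)-\Rc_n(\af)$ on its own. Once this is corrected, your handling of the scale split at $2^{-n_0}\simeq\lambda$, the uniqueness for $\gamma>0$, and the two-model bound \eqref{eq:diff_R_Rtilde} are all plausible; but as written, the convergence of $(\Rc_n(\af))_n$ and the derivation of \eqref{eq:estimation_reconstruction_theorem} both rest on the false coefficient estimate, which is a genuine gap rather than a harmless shortcut.
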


\begin{proof}
The proof uses the wavelet analysis in decomposing the function $\af$ in a smooth wavelet basis. The proof requires many computation. A complete one can be found in \cite{hairer} and a less exhaustive one is in \cite{frizhairer}.
The construction of $\Rc(\af)$ is the following. We define a sequence $(\Rc^j(\af))_{j\in\NN}$ such that
\[\Rc^j(\af)\defeq \sum_{k\in\ZZ}\Pi_{k/2^j}(\af(k/2^j))(\phi_k^j)\phi_k^j,\]
where $\phi_k^j$ is defined in Definition \ref{def:wavelet} with a regularity at almost $r$. Then, we show that  $\Rc^j(\af)$ converges weakly to a distribution $\Rc(\af)$ which means that $\Rc^j(\af)(\eta)$ converges to $\Rc(\af)(\eta)$ for all $\eta\in C^r_0$. And we show that the bound~\eqref{eq:estimation_reconstruction_theorem} holds.
\end{proof}

\begin{Rem}
It can be proved that if for all $s\in [0,T]$ and $\atau\in \modspace$, $\Pi_s\atau$ is a continuous function then $\Rc(\af)$ is also a continuous function such that
\[\Rc(\af)(s)=\Pi_s(\af(s))(s).\]
\end{Rem}

\begin{Cor}
With the same notation as in  Theorem \ref{the_reconstruction_map}, for every $\gamma>0$, there is a constant $C$  such as
$$\norme{\Rc(\af)}_{\alpha,T}\leq C \norme{\Pi}_{\gamma,T}\norme{\af}_{\gamma,T}^*.$$
\end{Cor}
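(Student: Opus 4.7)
The plan is to obtain the bound by combining the reconstruction theorem estimate \eqref{eq:estimation_reconstruction_theorem} with the analytic bound on the model $\Pi$, and then observing that since $\lambda\in(0,1]$ all the powers $\lambda^{\beta}$ and $\lambda^{\gamma}$ that appear can be replaced by $\lambda^{\alpha_*}$ up to a multiplicative constant.

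First I would write, for $s\in[0,T]$, $\lambda\in(0,1]$ and a test function $\eta\in B_r$, the triangle inequality
\[
|\Rc(\af)(\eta_s^\lambda)|\leq \bigl|[\Rc(\af)-\Pi_s(\af(s))](\eta_s^\lambda)\bigr|+|\Pi_s(\af(s))(\eta_s^\lambda)|.
\]
The first summand is controlled by $C\norme{\Pi}_{\gamma,T}\norme{\af}_{\gamma,T}^*\lambda^\gamma$ directly by Theorem~\ref{the_reconstruction_map}. For the second summand, I would decompose $\af(s)=\sum_{\beta<\gamma}\af(s)_\beta$ into its homogeneous components, apply the analytic estimate of Definition~\ref{model} to each piece to get $|\Pi_s(\af(s)_\beta)(\eta_s^\lambda)|\leq \norme{\Pi}_{\gamma,T}\lambda^\beta\norme{\af(s)}_\beta$, and sum over $\beta$.

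Next, because $\indexSet$ is locally finite and bounded below, the set $\{\beta\in\indexSet:\beta<\gamma\}$ is finite, so the sum has only a fixed (structure-dependent) number of terms. For each such $\beta$ we have $\alpha_*\leq\beta$, and since $\lambda\in(0,1]$ this gives $\lambda^\beta\leq\lambda^{\alpha_*}$; similarly $\lambda^\gamma\leq\lambda^{\alpha_*}$. Using $\norme{\af(s)}_\beta\leq \norme{\af}_{\gamma,T}^*$, this yields
\[
|\Rc(\af)(\eta_s^\lambda)|\leq C\,\norme{\Pi}_{\gamma,T}\,\norme{\af}_{\gamma,T}^*\,\lambda^{\alpha_*},
\]
uniformly over $\eta\in B_r$, $\lambda\in(0,1]$ and $s\in[0,T]$. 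Taking the supremum over these parameters gives exactly the defining bound for the $C^{\alpha_*}$-semi-norm of $\Rc(\af)$ from Definition~\ref{holder2}, yielding the claimed inequality (with $\alpha=\alpha_*$).

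There is no real obstacle here: the argument is a bookkeeping step packaging together the reconstruction estimate and the analytic bound from the model. The only mild point is to be careful that the summation over homogeneities $\beta<\gamma$ remains finite (which it is, by the local finiteness of $\indexSet$), so that the constant $C$ depends only on the regularity structure and on $\gamma$, and not on $\af$ or the model $M$.
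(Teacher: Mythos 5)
Your argument is correct and follows the same route as the paper: triangle inequality splitting $\Rc(\af)$ into the reconstruction error plus $\Pi_s(\af(s))$, then bounding each piece by \eqref{eq:estimation_reconstruction_theorem} and by the model estimate \eqref{def_model_ineq} respectively, and finally absorbing $\lambda^{\beta-\alpha_*}$ and $\lambda^{\gamma-\alpha_*}$ by $1$ on $\lambda\in(0,1]$. If anything your write-up is cleaner than the paper's: you correctly identify $\alpha=\alpha_*$, justify the finite sum over homogeneities, and use $\norme{\af(s)}_\beta\leq\norme{\af}_{\gamma,T}^*$ where the paper's intermediate line loosely writes $\norme{\af}_{\gamma,T}$; and you avoid the paper's unnecessary (and slightly misleading) ``letting $\lambda\to 0$'' phrasing.
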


\begin{proof}
According to  Theorem \ref{the_reconstruction_map}, for $\phi\in B_r$,
$$\frac{\abs{\Rc(\af)(\phi_s^\lambda)}}{\lambda^{\alpha}}\leq\frac{\abs{\Pi_s(\af(s))(\phi_s^\lambda)}}{\lambda^\alpha}+ C\norme{\Pi}_{\gamma,T}\norme{\af}_{\gamma,T}^* \lambda^{\gamma-\alpha},$$
and according to the Definition \ref{model}, 
$$\frac{\abs{\Pi_s(\af(s))(\phi_x^\lambda)}}{\lambda^\alpha}\leq \norme{\Pi}_{\gamma,T}\norme{\af}_{\gamma,T}.$$
So finally 
\begin{eqnarray*}
\norme{\Rc(\af)}_{\alpha,T}&\leq &\norme{\Pi}_{\gamma,T}\norme{\af}_{\gamma,T} + C\norme{\Pi}_{\gamma,T}\norme{\af}_{\gamma,T}^* \lambda^{\gamma-\alpha}\\
&\leq & C\norme{\Pi}_{\gamma,T}\norme{\af}_{\gamma,T}^*,
\end{eqnarray*}
which, by letting $\lambda$ going to $0$ proves the inequality.
\end{proof}

\subsection{Modelled distribution of controlled rough paths}
We reformulate the definition of a controlled rough path in the regularity structures framework.
\begin{Def}
\label{def_abs_mod}
Given $(W,\WW)\in \rC^\alpha$, $(y,y')\in \DD^{2\alpha}_W$, the rough path regularity structure $(\indexSet^r,\modspace^r,\grou^r)$ and $M^r=(\Pi^r,\Gamma^r)$ the model associated (cf. Definition \ref{roughStruDef}), we define a modelled distribution $\aY\in D^{2\alpha}_{M^r}$ such that $$\aY(t) = y_t\an + y'_t\aW,\quad \forall t\in[0,T].$$
The space $D^{2\alpha}_{M^r}$\nomenclature{$D^{2\alpha}_{M^r}$}{Modelled distributions of the controlled rough paths} is the space of the \emph{ modelled distributions of the controlled rough paths}.
\end{Def}

\begin{Rem}
This definition is a particular case of modelled distributions of Definition~\ref{defMod}.
\end{Rem}

\begin{proof}
Let check that $\aY$ is in $D^{2\alpha}_{M^r}$. For every $s,t\in [0,T]$, 
\begin{eqnarray*}
\aY(t)-\Gamma^r_{t,s}(\aY(s))& = & \aY(t)-\Gamma^r_{t,s}(y_s\an+y'_s\aW)\\
&=& \aY(t)-(y_s\an+ y'_s\aW + y'_sW_{s,t}\an),
\end{eqnarray*}
using the Definition \ref{roughStruDef}.
Then, we have \begin{eqnarray*}
\norme{\aY(t)-\Gamma^r_{t,s}(\aY(s))}_0 =\norme{y(t)-y(s)-y'(s)W_{s,t}}\leq  C \abs{t-s}^{2\alpha},
\end{eqnarray*}
according to the definition \ref{contrough} of controlled rough paths.
Besides, 
$$
\norme{\aY(t)-\Gamma^r_{t,s}(\aY(s))}_\alpha  = \norme{y'(t)-y'(s)} \leq  C \abs{t-s}^{\alpha},$$
which proves that $\aY\in D^{2\alpha}_{M^r}$.
\end{proof}

\begin{Pro}
\label{pro_equivalence_control_modelledcontrol_roughpath}
With the notations of Definition \ref{def_abs_mod}, the application  $(y,y')\in\DD^{2\alpha}_W\mapsto \aY\in D^{2\alpha}_{M^r}$
is an isomorphism and the norms $\norme{y}_{\infty,T}+\norme{y'}_{\infty,T}+\norme{(y,y')}_{2\alpha,T}^W$ and $\norme{\aY}_{2\alpha,T}^*$ are equivalent.
\end{Pro}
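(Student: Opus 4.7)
The map $\Phi:(y,y')\mapsto\aY$, $\aY(t)=y_t\an+y'_t\aW$, is linear, and the computation carried out just above the proposition already shows that it maps into $D^{2\alpha}_{M^r}$. My plan is therefore to (i) establish bijectivity, and (ii) read the norm equivalence directly off the same identity that was used to verify $\aY\in D^{2\alpha}_{M^r}$.

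Injectivity is immediate: since $\an\in\modspace^r_0$ and $\aW\in\modspace^r_\alpha$ lie in distinct one-dimensional homogeneity subspaces, the pointwise vanishing of $\aY$ forces $y\equiv 0$ and $y'\equiv 0$. For surjectivity, I would take an arbitrary $\af\in D^{2\alpha}_{M^r}$ valued in the sector $\vect{\an,\aW}$, decompose $\af(t)=a_0(t)\an+a_\alpha(t)\aW$, and apply the action of $\Gamma^r$ from Definition~\ref{roughStruDef} to obtain
\begin{align*}
\af(t)-\Gamma^r_{t,s}\af(s) = \bigl(a_0(t)-a_0(s)-a_\alpha(s)W_{s,t}\bigr)\an + \bigl(a_\alpha(t)-a_\alpha(s)\bigr)\aW.
\end{align*}
The defining bound of $D^{2\alpha}_{M^r}$ at $\beta=0$ and at $\beta=\alpha$ then produces, respectively, $|a_0(t)-a_0(s)-a_\alpha(s)W_{s,t}|\leq C|t-s|^{2\alpha}$ and $|a_\alpha(t)-a_\alpha(s)|\leq C|t-s|^\alpha$, which are exactly the two conditions required by Definition~\ref{contrough} for $(y,y'):=(a_0,a_\alpha)\in\DD^{2\alpha}_W$, with $R^y_{s,t}$ being the coefficient of $\an$. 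By construction $\Phi(y,y')=\af$.

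Reading the same decomposition quantitatively gives
\[\norme{\aY}_{2\alpha,T}^*=\norme{y_0}+\norme{y'_0}+\norme{R^y}_{2\alpha,T}+\norme{y'}_{\alpha,T}=\norme{y_0}+\norme{y'_0}+\norme{(y,y')}_{2\alpha,T}^W,\]
and to conclude I would invoke the remark following Definition~\ref{contrough}, which states that on $\DD^{2\alpha}_W$ the two norms $\norme{y_0}+\norme{y'_0}+\norme{(y,y')}_{2\alpha,T}^W$ and $\norme{y}_{\infty,T}+\norme{y'}_{\infty,T}+\norme{(y,y')}_{2\alpha,T}^W$ are equivalent. This yields the claimed equivalence.

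The only delicate point is the tacit convention that $D^{2\alpha}_{M^r}$ in the statement denotes modelled distributions valued in the sector $\vect{\an,\aW}$ (rather than in all of $\modspace^r$, which also contains $\aWd$ and $\aWWd$); those negative-homogeneity components cannot lie in the image of $\Phi$. Once this interpretation is fixed, no genuine obstacle remains and the proof reduces to the essentially routine unwinding of the definitions sketched above.
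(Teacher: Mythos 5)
Your norm-equivalence computation is essentially the one the paper gives: both read off the $\an$- and $\aW$-components of $\aY(t)-\Gamma^r_{t,s}\aY(s)$ to identify $\norme{\aY}_{2\alpha,T}$ with $\sup\{\norme{y'}_{\alpha,T},\norme{R^y}_{2\alpha,T}\}$ (note it is a supremum, not a sum as you wrote, though that only costs a factor $2$ in the equivalence), then pass between the two Banach norms on each side. Where you go beyond the paper is that the paper explicitly declares ``we prove only the equivalence between the two norms'' and omits the bijectivity; you supply it, and your argument is correct: injectivity from $\an,\aW$ lying in distinct one-dimensional homogeneity subspaces, surjectivity by decomposing $\af(t)=a_0(t)\an+a_\alpha(t)\aW$ and reading the two Definition~\ref{defMod} bounds at $\beta=0$ and $\beta=\alpha$ as exactly the controlled-rough-path conditions for $(a_0,a_\alpha)$. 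Your caveat about the target space is the right one: $D^{2\alpha}_{M^r}$ literally maps into $\modspace^r_{<2\alpha}$, which also contains $\aWd,\aWWd$, so for the bijectivity claim one must restrict to modelled distributions valued in the sector $\vect{\an,\aW}$ — Definition~\ref{def_abs_mod} implicitly sets this convention, and the paper never spells it out. One tiny gap worth noting in your surjectivity step: you should also record that $a_0\in C^\alpha$, which follows from $|a_0(t)-a_0(s)|\leq|a_\alpha(s)||W_{s,t}|+C|t-s|^{2\alpha}$ since $a_\alpha$ is bounded and $W\in C^\alpha$; as written you only verified the two difference bounds.
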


\begin{proof}
We prove the only equivalence between the two norms.

With the notation of Definition \ref{contrough}, we recall that 
\[y_{s,t}=y_s'W_{s,t}+R^y_{s,t},\]
and that $\norme{(y,y')}_{2\alpha,T}^W=\norme{y'}_{\alpha,T}+\norme{R^y}_{2\alpha,T}$.
Then according to the previous proof and Definition \ref{defMod},
\[\norme{\aY}_{2\alpha,T}=\sup \left\{\norme{y'}_{\alpha,T},\norme{R^y}_{2\alpha,T}\right\}.\]
So we have $\norme{\aY}_{2\alpha,T}\leq \norme{(y,y')}_{2\alpha,T}^W $ and
$\norme{(y,y')}_{2\alpha,T}^W\leq 2 \norme{\aY}_{2\alpha,T}$.
In adding the terms   $\norme{y}_{\infty,T}+\norme{y'}_{\infty,T}$ to each semi-norms, we obtain the result.
\end{proof}

%%%% ROUGH PATH INTEGRAL %%%%%%%%%%%%%%%%%%%%%%%%%%%%%%%%%%%%%%%%%%%%%%%%%%
\section{Rough path integral with the reconstruction map}
\label{sec_roughInt}
The power of the theory of regularity structures is to give a sense in some cases of a product of distributions. Indeed, it is not possible in general to extend the natural product between functions to the space of distributions.

To build the controlled rough path integral of  Theorem \ref{classic_rough_int}, with the theory of  regularity structures we need to give a meaning to the product between $y$ and $\dot{W}$, where $\dot{W}$ is a distribution.
We start by giving a meaning to the abstract product between $\aY$ and $\aWd$.
When the product has good properties, we use the reconstruction map (Theorem \ref{the_reconstruction_map}) to define a  \textquotedblleft real\textquotedblright\  multiplication.

\begin{Def}[Multiplication in the model space]
\label{def_multiplication}
Given a regularity structure $(\indexSet,\modspace,\grou)$,
 we say that the  continuous bilinear map  \nomenclature{$\mul$}{Abstract product}$\mul:\modspace^2\rightarrow \modspace$ defines a \emph{multiplication}  (\emph{product}) on the model space $\modspace$ if
\begin{itemize}
\item For all $\atau\in \modspace$, on has $\an\star\atau=\an,$
\item For every $\atau\in T_\alpha$ and $\ab{\sigma}\in T_\beta,$ on has
 $\atau\mul\ab{\sigma}\in T_{\alpha+\beta}$, if $\alpha+\beta\in \indexSet$ and
 $\atau\mul\ab{\sigma}=0$ if $\alpha+\beta\notin\indexSet$.
 \item For every $\atau\in T_\alpha$, $\ab{\sigma}\in T_\beta$ and $\Gamma\in\grou$, $\Gamma(\atau\mul\ab{\sigma})=\Gamma(\atau)\mul\Gamma(\ab{\sigma})$.
\end{itemize}
We denote by $|\atau|$\nomenclature{$|\atau|$}{The homogeneity of the symbol $\atau$} the homogeneity $\alpha$ of the symbol $\atau$. The last item of the definition can be rephrased as $|\atau\star\ab{\sigma}|=|\atau|+|\ab{\sigma}|$.
\end{Def}

\begin{Rem}
\label{rem_product_rough_path}
For example in the following Theorem \ref{the_sewing_lemma}, we define within the regularity structure of rough paths the multiplication described in the table below:

\begin{tabular}{|c|c|c|c|c|}
  \hline
   $\star$ & $\aWd$ & $\aWWd$ & $\an$ & $\aW$ \\
  \hline
  $\aWd$ & $~$ & $~$ & $\aW$ & $\aWWd$ \\
  \hline
  $\aWWd$ & $~$ & $~$ &$\aWWd$ & $~$\\
  \hline  
  $\an$ &$\aWd$ &$\aWWd$ &$\an$ &$\aW$\\
  \hline  
  $\aW$ &$\aWWd$ & $~$ &$\aW$  & $~$\\
  \hline
\end{tabular}
\end{Rem}

We are now able to build the rough integral with the reconstruction theorem
(Theorem~\ref{the_reconstruction_map}). The operator $\aI$ corresponding to
the integral of a controlled rough path against a rough path.
\begin{The}
\label{the_sewing_lemma}
We set $\alpha\in\rinter$. There is a linear map $\aI:\rmodel \rightarrow \ahol$  such that  for all $\aY\in\rmodel,$ $\aI(\aY)(0)=0$ and such that the map $L$ defined by 
$$\forall t\in[0,T],~L(\aY)(t)\defeq\aI(\aY)(t)\an + \langle \aY(t),\an \rangle\aW$$  is linear and continuous from $\rmodel$ into itself. The symbol $\langle \cdot,\an \rangle$ \nomenclature{$\langle \cdot,\an \rangle$}{The projection onto $\an$}denotes the coordinate along $\an$.
\end{The}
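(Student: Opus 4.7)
The plan is to construct $\aI(\aY)$ in three stages: form the abstract product of $\aY$ with the symbol $\aWd$, reconstruct this product as a distribution on $[0,T]$, and then integrate that distribution to obtain a function. First, using the multiplication table in Remark \ref{rem_product_rough_path}, I define the abstract modelled distribution
\[
t \mapsto (\aY\star\aWd)(t) = y_t\,\aWd + y'_t\,\aWWd,
\]
and check that it belongs to $D^{3\alpha-1}_{M^r}$. Using the explicit action of $\Gamma^r_{t,s}$ from Definition \ref{roughStruDef}, the difference $(\aY\star\aWd)(t)-\Gamma^r_{t,s}(\aY\star\aWd)(s)$ splits as a multiple of $\aWd$ and of $\aWWd$ whose coefficients are, respectively, $R^y_{s,t}$ and $y'_t-y'_s$. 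The controlled-path estimates give bounds $|t-s|^{2\alpha}=|t-s|^{(3\alpha-1)-(\alpha-1)}$ and $|t-s|^{\alpha}=|t-s|^{(3\alpha-1)-(2\alpha-1)}$, which are exactly what the definition of $D^{3\alpha-1}_{M^r}$ requires. Since $\alpha\in(1/3,1/2]$, the regularity $3\alpha-1$ is strictly positive.

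Second, I apply Theorem \ref{the_reconstruction_map} to obtain $\Rc(\aY\star\aWd) \in C^{\alpha-1}$. Because $\alpha-1\in(-2/3,-1/2]\subset(-1,0)$, Lemma \ref{lem_lien_distribution_fonction} provides a unique $C^\alpha$-function vanishing at $0$ whose distributional derivative equals $\Rc(\aY\star\aWd)$; I denote this function $\aI(\aY)$. Then $L(\aY)(t):=\aI(\aY)(t)\,\an + y_t\,\aW$ is defined, and both the construction and continuity in $\aY$ pass through the composition of three continuous operations: the abstract multiplication $\aY\mapsto \aY\star\aWd$, the reconstruction map $\Rc$, and the integration provided by Lemma \ref{lem_lien_distribution_fonction}.

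Third, I verify $L(\aY)\in \rmodel$. A direct computation using Definition \ref{roughStruDef} gives
\[
L(\aY)(t)-\Gamma^r_{t,s}L(\aY)(s) = \bigl(\aI(\aY)(t)-\aI(\aY)(s)-y_sW_{s,t}\bigr)\an + (y_t-y_s)\aW.
\]
The $\aW$-component is controlled by $C|t-s|^\alpha$: from $\aY\in D^{2\alpha}_{M^r}$ we have $|y_t-y_s-y'_sW_{s,t}|\lesssim|t-s|^{2\alpha}$, which combined with the $\alpha$-H\"older regularity of $W$ gives the needed bound. For the $\an$-component, I exploit that $\Pi^r_s(\aY\star\aWd)(s)$, integrated against the indicator of $[s,t]$, equals exactly $y_sW_{s,t}+y'_s\WW_{s,t}$. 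Hence
\[
\aI(\aY)(t)-\aI(\aY)(s)-y_sW_{s,t}-y'_s\WW_{s,t} = \bigl[\Rc(\aY\star\aWd)-\Pi^r_s(\aY\star\aWd)(s)\bigr](\ind_{[s,t]}),
\]
and the reconstruction estimate \eqref{eq:estimation_reconstruction_theorem} applied at scale $\lambda=t-s$ yields the bound $C|t-s|^{3\alpha}$ (one factor of $\lambda$ is gained over $\lambda^{3\alpha-1}$ by the integration). Absorbing $y'_s\WW_{s,t}$ with the $2\alpha$-H\"older bound on $\WW$ then gives the required estimate $|t-s|^{2\alpha}$ on the $\an$-component.

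The main obstacle is the step above: the reconstruction estimate is stated for smooth test functions $\phi^\lambda_s \in B_r$, whereas $\ind_{[s,t]}$ is only an indicator. I would handle this by approximating $\ind_{[s,t]}$ by a sequence of smooth test functions supported near $[s,t]$ and of the appropriate scale, applying the reconstruction bound uniformly, and passing to the limit using that $\Rc(\aY\star\aWd)$ has continuous primitive $\aI(\aY)$ (so that testing against $\ind_{[s,t]}$ is actually well-defined and equal to $\aI(\aY)(t)-\aI(\aY)(s)$), while $\Pi^r_s(\aY\star\aWd)(s)$ admits the explicit primitive $y_sW_{s,\cdot}+y'_s\WW_{s,\cdot}$. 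Continuity of the linear map $L:\rmodel\to\rmodel$ then follows from the continuity of the three elementary operations listed in the second paragraph and the identity $\aI(\aY)(0)=0$ holds by construction.
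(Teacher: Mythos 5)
Your overall strategy — form the product $\aY\star\aWd\in D^{3\alpha-1}_{M^r}$, apply the reconstruction theorem, pass to a primitive via Lemma~\ref{lem_lien_distribution_fonction}, then verify $L(\aY)\in\rmodel$ — matches the paper's, and you correctly reduce the $D^{2\alpha}_{M^r}$-membership of $L(\aY)$ to the single estimate $|I(\aY)_{s,t}-y_sW_{s,t}-y'_s\WW_{s,t}|\est|t-s|^{3\alpha}$, which is the paper's \eqref{eq:I-yW}. The gap is in how you derive it. You write this quantity as $\left[\Rc(\aY\star\aWd)-\Pi^r_s(\aY\star\aWd(s))\right](\ind_{[s,t]})$ and propose to apply \eqref{eq:estimation_reconstruction_theorem} at scale $\lambda=t-s$ after approximating $\ind_{[s,t]}$ by smooth test functions. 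This does not go through. If you mollify $\ind_{[s,t]}$ at a scale $\delta\to 0$, the $C^r$-norm of the normalized test function diverges like $1/\delta$, so the reconstruction bound gives nothing in the limit. If instead you fix $\delta\sim|t-s|$ you do get a test function of the right scale and the reconstruction bound does yield $\est|t-s|^{3\alpha}$ — but it controls $(\rho_\delta* G)_{s,t}$, the mollified increment of the primitive $G$ of $\Rc(\aY\star\aWd)-\Pi^r_s(\aY\star\aWd(s))$, not $G_{s,t}$ itself. The discrepancy $|G_{s,t}-(\rho_\delta* G)_{s,t}|$ is controlled only by the $\alpha$-Hölder modulus of $G$, i.e.\ $O(|t-s|^\alpha)$, which dominates the target $|t-s|^{3\alpha}$. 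No single-scale approximation produces the sharp exponent.

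A second, related, difficulty your sketch elides is that \eqref{eq:estimation_reconstruction_theorem} controls $[\Rc-\Pi_x](\phi^\lambda_x)$ only when the test function and the model basepoint sit at the \emph{same} point $x$. Keeping the basepoint fixed at $s$ while testing against mass spread over all of $[s,t]$ introduces, through the re-expansion $\Pi_{x}\big(\aY\star\aWd(x)-\Gamma^r_{x,s}\aY\star\aWd(s)\big)$, terms that scale like $\lambda^\beta|x-s|^{3\alpha-1-\beta}$ with $\beta\in\{\alpha-1,2\alpha-1\}$ both negative; these blow up as the test-function scale $\lambda\to 0$ at fixed $x\neq s$. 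Controlling the two effects simultaneously requires a genuinely multi-scale argument. The paper does exactly this by expanding $I(\aY)_{s,t}$ in the wavelet basis from Theorem~\ref{def:wavelet} and summing contributions over dyadic levels $j\geq l$ with $2^{-l}\sim|t-s|$, see \eqref{eq:wavelet_sum_estimation_1} through \eqref{eq:estimation_r-pi_2}. Your ``approximate and pass to the limit'' would have to be replaced by such a dyadic telescoping to become a proof.
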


\begin{Rem}
Recalling that if $\aY\in \rmodel$, according to the Definition \ref{def_abs_mod} there is $(y,y')\in\DD^{2\alpha}_W$ such that
\[\aY(t)=y_t\an+y'_t\aW,\]
we show in the proof of the Theorem \ref{the_sewing_lemma} that
\[I(\aY)(t)=\int_0^t y_s\di{\rW_s},\]
where $\int_0^t y_s\di{\rW_s}$ is defined in Theorem~\ref{classic_rough_int}. Thus $L$ is the equivalent in the modeled distribution space of the map
\[(y,y')\in\DD^{2\alpha}_W\mapsto \left(\int_0^\cdot y_s\di{\rW_s},y_\cdot\right)\in \DD^{2\alpha}_W. \]
\end{Rem}

\begin{Rem}
The proof of the existence of $I$ is the same as  in  Theorem~\ref{classic_rough_int} (classical sewing lemma). But we show how Theorem~\ref{the_reconstruction_map} (reconstruction map) can be adapted to recover the result.
\end{Rem}

\begin{proof}
For $\aY$ in $\rmodel$, we define the point-wise product between $\aY$ and $\aWd$ as in Remark~\ref{rem_product_rough_path}, i.e $\aY(t)\mul\aWd \defeq y_t\aWd+y_t'\aWaWd$, where $\aWaWd\defeq\aW\star\aWd\defeq \aWWd$. We denote this product $\aY\aWd(t)$, to simplify the notation. Using the fact that $|\aW|+|\aWd|=2\alpha-1=|\aWWd|$ it is straightforward to check that the product is consistent with the Definition~\ref{def_multiplication}.

We check now that $\aY\aWd$ is in $D^{3\alpha-1}_{M^r}$. According to Definition~\ref{roughStruDef} item~\ref{enu:gamma_s_t}, we compute $$\ga^r\left(\aY\aWd(s)\right)=(y_s+y'_sW_{s,t})\aWd+y'_s\aWWd,$$ since $\aY\in D^{2\alpha}_{M^r}$ with Definition~\ref{def_abs_mod},
\[\norme{\aY\aWd(t)-\ga^r\left(\aY\aWd(s)\right)}_{\alpha-1}=\norme{y_{s,t}-y'_sW_{s,t}}\est \abs{t-s}^{2\alpha},
\label{eq:preuve_int_1}\]
\[\norme{\aY\aWd(t)-\ga^r\left(\aY\aWd(s)\right)}_{2\alpha-1}=\norme{y'_{s,t}}\est \abs{t-s}^{\alpha}.\label{eq:preuve_int_2}\]
Thus, by Definition \ref{defMod}, we get that $\aY\aWd\in D^{3\alpha-1}_{M^r}.$

Thus, given that $3\alpha-1>0$, we can apply the reconstruction theorem in the positive case.

 So there is a unique distribution $\Rc{(\aY\aWd)}$ in $C^{\alpha-1}$ such that for every $s\in [0,T],$ $\lambda>0$ and every localized test function $\eta_s^\lambda$ of Definition~\ref{def:test_functions}, 
\[\left|\Rc(\aY\aWd)(\eta_s^\lambda)-y_s\int \eta_s^\lambda (u)\di{W_u}-y_s'\int \eta_s^\lambda(u)\di{\WW_{s,u}}\right|\leq C\norme{\eta}_{C^1} \lambda^{3\alpha-1},\label{eq:reconstruction_integrale_rough}\]
where we use relations of the item \ref{enu:Pi} of  Definition \ref{roughStruDef}.

We define with Lemma~\ref{lem_lien_distribution_fonction} the operator $I:\rmodel\rightarrow C^\alpha$ such that $I(\aY)\in C^\alpha$ is associated to $\Rc(\aY\aWd)$. It means that $I\left(\aY\right)(0)\defeq 0$ and $\langle I\left(\aY\right),\eta'\rangle\defeq -\langle\Rc(\aY\aWd),\eta\rangle$.  More precisely, we have for $|t-s|\leq 1$,
\[{I(\aY)}_{s,t}=\sum_{k\in I_l}
  \langle\Rc(\aY\aWd),\phi^{l}_k\rangle\int_s^t\phi^{l}_k+\sum_{j\geq
    l}\sum_{k\in I_l}
  \langle\Rc(\aY\aWd),\psi^j_k\rangle\int_s^t\psi^j_k.\]

  Moreover, according to Theorem~\ref{def:wavelet}, we can  choose
  the integer $l$ such that $2^{-l}\leq |t-s|<2^{-l+1}$.

%%%%%%%%%%%%%%%%%%%%%%%%%%%%%%%%
We have 
\begin{align}
\label{eq:wavelet_sum_estimation_1}
I(\aY)_{s,t}-y_sW_{s,t}-y'_s\WW_{s,t}&=\sum_{k\in I_l} \langle\Rc(\aY\aWd)-\Pi_{s}(\aY\aWd(s)),\phi^{l}_k\rangle\int_s^t\phi^{l}_k\\
&+\sum_{j\geq l}\sum_{k\in I_j} \langle\Rc(\aY\aWd)-\Pi_{s}(\aY\aWd(s)),\psi^j_k\rangle\int_s^t\psi^j_k.
\label{eq:wavelet_sum_estimation_3}
\end{align}

We have
\begin{align}\nonumber
  \langle\Rc(\aY\aWd)-\Pi_{s}(\aY\aWd(s)),\psi^j_k\rangle
  &=\langle\Rc(\aY\aWd)-\Pi_{k/2^j}(\aY\aWd(k/2^j)),\psi^j_k\rangle\\
&+\langle\Pi_{k/2^j}(\aY\aWd(k/2^j))-\Pi_{s}(\aY\aWd(s)),\psi^j_k\rangle\label{eq:rc-pi}.
\end{align}
The first term of the right side of \eqref{eq:rc-pi} is bounded by \eqref{eq:estimation_reconstruction_theorem},
\begin{align}
|\langle\Rc(\aY\aWd)-\Pi_{k/2^j}(\aY\aWd(k/2^j)),\psi^j_k\rangle|\leq
  C2^{-j/2}2^{j(1-3\alpha)}\label{eq:diff_pi_0}.
  \end{align}
For bounding the second term of the right side  of \eqref{eq:rc-pi} we use the algebraic relations between $\Pi$ and $\Gamma$ as well as the relations \eqref{def_model_ineq},
\begin{align*}
&\langle\Pi_{k/2^j}(\aY\aWd(k/2^j))-\Pi_{s}(\aY\aWd(s)),\psi^j_k\rangle\\
&=\langle\Pi_{k/2^j}\left(\aY\aWd(k/2^j)-\Gamma_{k/2^j,s}\aY\aWd(s)\right),\psi_k^j\rangle.
\end{align*}
Yet $\aY\aWd\in D^{3\alpha-1}_{M^r}$, so with \eqref{eq:preuve_int_1} and \eqref{eq:preuve_int_2}, we have $$\norme{\aY\aWd(k/2^j)-\Gamma_{k/2^j,s}\aY\aWd(s)}_\beta\leq C |k/2^j-s|^{3\alpha-1-\beta},$$ for $\beta\in\{2\alpha-1,\alpha-1\}$. Finally, we obtain with the bounds \eqref{def_model_ineq},
\begin{align}
&\left|\langle\Pi_{k/2^j}(\aY\aWd(k/2^j))-\Pi_{s}(\aY\aWd(s)),\psi^j_k\rangle\right|\\
  &\leq\sum_{\beta\in\{2\alpha-1,\alpha-1\}} 2^{-j\beta-j/2}\left|\frac{k}{2^j}-s\right|^{3\alpha-1-\beta}.
\label{eq:diff_pi_2}
\end{align}

Moreover, we have  $k/2^j\in [-c/2^j-s,c/2^j+t]$ for all terms that
are non-vanishing in \eqref{eq:wavelet_sum_estimation_1} and
\eqref{eq:wavelet_sum_estimation_3}. Since $j\geq l$ in the sums and
that we assume $2^{-j}\leq 2^{-l}\leq |t-s|< 2^{-l+1}$, we have
\begin{align}
  \label{eq:t-s}
  \left|\frac{k}{2^j}-s\right|\leq C |t-s|,
\end{align}
for all non-vanishing terms in the sums \eqref{eq:wavelet_sum_estimation_1} and
\eqref{eq:wavelet_sum_estimation_3}.

Firstly we bound \eqref{eq:wavelet_sum_estimation_1}. On the one hand,
using
\eqref{eq:diff_pi_0},
\eqref{eq:diff_pi_2}, \eqref{eq:t-s} and the fact that
$|t-s|<2^{-l+1}$, we obtain
\begin{align}
\label{eq:R_pi_psi}
  \left|\langle\Rc(\aY\aWd)-\Pi_{s}(\aY\aWd(s)),\psi^j_k\rangle\right|
  \leq C 2^{-l/2}2^{-l(3\alpha-1)}.
\end{align}
On another hand, we have
\begin{align}
  \nonumber
  \left|\int_s^t\phi_k^l\right|&\leq
                                 2^{l/2}|t-s|\sup_{t\in\RR}\norme{\phi(t)}\\
\label{eq:int_s_t_phi_l}  &\leq C 2^{-l/2}.
\end{align}
Thus, because there is  only a finite number of terms 
independent  on $l$ that contribute to the sum
\eqref{eq:wavelet_sum_estimation_1}, we obtain with
\eqref{eq:R_pi_psi} and \eqref{eq:int_s_t_phi_l}
the following bound on \eqref{eq:wavelet_sum_estimation_1}:
\begin{align}
  \label{eq:decomposition_sum_1}
\left|\sum_{k\in I_l}
  \langle\Rc(\aY\aWd)-\Pi_{s}(\aY\aWd(s)),\phi^{l}_k\rangle\int_s^t\phi^{l}_k\right|\leq
  C2^{-l3\alpha}\leq C|t-s|^{3\alpha},
\end{align}
where $C$ does not depends on $l$.

Now, we bound \eqref{eq:wavelet_sum_estimation_3}. On the one hand, using
\eqref{eq:diff_pi_0},
\eqref{eq:diff_pi_2}, \eqref{eq:t-s}, we have
for $j\geq l$,
\begin{align}
\nonumber
&|\langle\Rc(\aY\aWd)-\Pi_{s}(\aY\aWd(s)),\psi^j_k\rangle|\\
&\leq C2^{-j/2}\left[2^{j(1-3\alpha)}+|t-s|^{2\alpha}2^{-j(\alpha-1)}+|t-s|^{\alpha}2^{-j(2\alpha-1)}\right]\label{eq:diff_pi_final}.
\end{align}

On an other hand, we observe that  \[\left|\sum_{k\in I_j}\int_s^t\psi_k^j\right|\leq C2^{-j/2}\label{eq:int_psi},\] because a primitive of $\psi$ has a compact support and the fact that $\int \psi=0$. Then, combining \eqref{eq:diff_pi_final} and \eqref{eq:int_psi} we obtain,
\begin{align}
\nonumber
&\left |\sum_{j\geq l}\sum_{k\in I_j} \langle\Rc(\aY\aWd)-\Pi_{s}(\aY\aWd(s)),\psi^j_k\rangle\int_s^t\psi^j_k\right|\nonumber\\
\nonumber
&\leq \sum_{j\geq l}2^{-3j\alpha}+|t-s|^{2\alpha}2^{-j\alpha}+|t-s|^{\alpha}2^{-j2\alpha}\\\nonumber
&\leq C 2^{-3l\alpha}+|t-s|^{2\alpha}2^{-l\alpha}+|t-s|^{\alpha}2^{-l2\alpha}\\
&\leq C |t-s|^{3\alpha}.\label{eq:estimation_r-pi_2}
\end{align}
With \eqref{eq:decomposition_sum_1} and \eqref{eq:estimation_r-pi_2} we obtain the bound of the left hand side of \eqref{eq:wavelet_sum_estimation_1},
\[|I(\aY)_{s,t}-y_sW_{s,t}-y'_s\WW_{s,t}|\leq C|t-s|^{3\alpha}.\label{eq:I-yW}\]

To show that $L(\aY)$ is in $\rmodel$, we compute 
$\ga^r{\left(L(\aY)(s)\right)}=(I(\aY)(s)+y_sW_{s,t})\an+y_s\aW$ and we use the estimation \eqref{eq:I-yW}. Thus, we have
\begin{align}
\nonumber
\norme{L(\aY)(t)-\ga^r{\left(L(\aY)(s)\right)}}_{0} &=\norme{I(\aY)(t)-I(\aY)(s)-y_sW_{s,t}}\\
&\leq\norme{y'}_{\infty,T}\norme{\WW}_{2\alpha,T}\abs{t-s}^{2\alpha}+C\abs{t-s}^{3\alpha}, \label{eq:norm_0_L}
\end{align}
\[\text{and} \norme{L(\aY)(t)-\ga{\left(L(\aY)(s)\right)}}_{\alpha} =\norme{y_{s,t}}\\
\leq  \norme{y}_{\alpha}\abs{t-s}^{\alpha},\label{eq:norm_1_L} 
\]
which proves that $L(\aY)$ is in $\rmodel$.

It remains to prove the continuity of $L$. According to \eqref{eq:estimation_reconstruction_theorem}, the constant $C$ in \eqref{eq:I-yW} is proportional to $\norme{\aY}_{\gamma,T}^*$. So we have,
\begin{align*}
|I(\aY)_{s,t}-y_sW_{s,t}|\leq \norme{y'}_\infty\norme{\WW}_{2\alpha,T}|t-s|^{2\alpha}+C\norme{Y}_{2\alpha,T}^*|t-s|^{3\alpha},
\end{align*}
which allows with the previous computation \eqref{eq:norm_0_L} and \eqref{eq:norm_1_L}  to bound
\[\norme{L(\aY)}_{2\alpha,T}^*\leq C\norme{\aY}_{2\alpha,T}^*.\]
This concludes the proof.
\end{proof}

%%%%%%%%%%%%%%%%%%%%%%%%%%%%%%%%%%%%%%%%%%%%%%%%%%%%%%%%%%%%%%%%%%%%%%%%%%%%
\section{Existence of a rough path lift}
\label{sub_rough_lift}
As an application of the reconstruction operator in the case $\gamma\leq 0$, we prove  Theorem~\ref{levyArea} which states that for any $W\in C^\alpha$ ($\alpha\in (1/3,1/2]$) with values in $\RR^n$, it exists a rough path lift $\WW$ and that the map $W\mapsto \rW$ is continuous from $C^\alpha$ to $\rC^{\alpha}$.

\begin{proof}[Proof (Theorem \ref{levyArea})]
We consider the regularity structure $(\indexSet^e,\modspace^e,\grou^e)$ such that
$\indexSet^e=\{\alpha-1,0\}$, $\modspace^e =\vv\langle\aWd^{i},i=1,\dots ,n\rangle\bigoplus\vv\langle\an\rangle$ and for $\Gamma_h^e\in \grou$,
$\Gamma_h^e(\aWd)=\aWd,$ $\Gamma_h^e(\an)=\an.$ We associate the model $M^e=(\Pi^e,\Gamma^e)$ such that for every $s,t\in [0,T]$, $\eta\in B_1$ 
$$\Pi_s^e(\aWd)(\eta)\defeq\int \eta(t)\di{W_t},\qquad \Pi^e_s(\an)(t)\defeq 1,$$ and $\Gamma^e_{s,t}\defeq\Gamma^e_{W_{t,s}}$.

For $0\leq s\leq t\leq 1$, and integers $0\leq i,j\leq n$, the modelled distribution $\aWWd$ given by $\ab{\dot{\WW}^{i,j}}(s) \defeq W_{s}^i\aWd^{j}$ is in $D^{2\alpha-1}_M$. Indeed $\ab{\dot{\WW}^{i,j}}(t)-\Gamma_{t,s}^e\left(\ab{\dot{\WW}^{i,j}}(s)\right) = W_{t}^i\aWd^j - W_{s}^i\aWd^j=W_{s,t}^i\aWd^j,$ then $$\norme{\ab{\dot{\WW}^{i,j}}(t)-\Gamma^e_{t,s}\left(\ab{\dot{\WW}^{i,j}}(s) \right)}_{\alpha-1}\leq \abs{t-s}^{\alpha}.$$
So, $\gamma-(\alpha-1)=\alpha$, we have $\gamma = 2\alpha -1$. We conclude using the Definition~\ref{defMod}.

Given that $\alpha\in (1/3,1/2]$, we have $2\alpha-1\leq 0$. Thus, the uniqueness of the reconstruction map does not hold. But, according to  Theorem \ref{the_reconstruction_map}, there exists $\Rc(\aWWd)\in C^{\alpha-1}$ such that
\[|[\Rc(\aWWd)-\Pi_s^e(\aWWd)](\eta_s^\lambda)|\leq C\lambda^{2\alpha-1},\label{eq:WW_dot}\]
where $\eta\in B_1$. With  Lemma~\ref{lem_lien_distribution_fonction}, we define $z\in C^\alpha$ as  the primitive of $\Rc(\aWWd)$ such that $z(0)=0$. Moreover, we have for all $s,t\in [0,1]$,
\[z_{s,t}=\sum_{k\in I_l} \langle\Rc(\aWWd),\phi^{l}_k\rangle \int_s^t\phi^{l}_k+\sum_{j\geq l}\sum_{k\in I_j} \langle \Rc(\aWWd),\psi^j_k\rangle \int_s^t\psi^j_k,\label{eq:almost_WW}\]
and
\[W_{s,t}=\sum_{k\in I_l} \langle\Pi^e_s(\aWd),\phi^{l}_k\rangle \int_s^t\phi^{l}_k+\sum_{j\geq l}\sum_{k\in I_j} \langle \Pi^e_s(\aWd),\psi^j_k\rangle \int_s^t\psi^j_k,\]
which yields to
\[W_s\otimes W_{s,t}=\sum_{k\in I_l} \langle\Pi^e_s(\aWWd(s)),\phi^{l}_k\rangle \int_s^t\phi^{l}_k+\sum_{j\geq l}\sum_{k\in I_j} \langle \Pi^e_s(\aWWd(s)),\psi^j_k\rangle \int_s^t\psi^j_k.\]
If there is a constant $C\geq 0$ such that,
\[|z_{s,t}-W_s\otimes W_{s,t}|\leq C|t-s|^{2\alpha},\label{eq:z-W}\]
 then setting $\WW_{s,t}\defeq z_{s,t}-W_{s}\otimes W_{s,t}$, the pair $(W,\WW)$ belongs to $\rC^\alpha$ according to the Definition \ref{def_rough_path}. Let us prove \eqref{eq:z-W}. We have
 \begin{align}
 \nonumber
 z_{s,t}-W_s\otimes W_{s,t}&=\sum_{k\in I_l} \langle\Rc(\aWWd)-\Pi^e_{s}(\aWWd(s)),\phi^{l}_k\rangle\int_s^t\phi^{l}_k\\&
 +\sum_{j\geq l}\sum_{k\in I_j} \langle\Rc(\aWWd)-\Pi^e_{s}(\aWWd(s)),\psi^j_k\rangle\int_s^t\psi^j_k.\label{eq:wave_z-w}
 \end{align}
 From \eqref{eq:estimation_reconstruction_theorem}, we have the bounds
 \[|\langle\Rc(\aWWd)-\Pi^e_{s}(\aWWd(s)),\phi^j_k\rangle|\leq C 2^{-j/2-j(2\alpha-1)},\label{eq:r-w-phi}\]
 and\[|\langle\Rc(\aWWd)-\Pi^e_{s}(\aWWd(s)),\psi^j_k\rangle|\leq C 2^{-j/2-j(2\alpha-1)}.\label{eq:r-w-psi}\]

Then, combining \eqref{eq:wave_z-w},\eqref{eq:r-w-phi} and \eqref{eq:r-w-psi}, we proceed as in the proof of\\ Lemma~\ref{lem_lien_distribution_fonction} to show \eqref{eq:z-W}.

It remains to show the continuity.
If there is another path $\tilde{W}\in C^\alpha$, we define as for $W$, a model $(\tilde{\Pi},\tilde{\Gamma})$, a modelled distribution $\tilde{\aWWd}$, a reconstruction map $\tilde{\Rc}$ and then $\tilde{\WW}$. By denoting
\[\Delta\Pi_{s,k/2^j}\defeq [\Pi(\aWWd(k/2^j))-\Pi(\aWWd(s))-\tilde{\Pi}(\tilde{\aWWd}(k/2^j))+\tilde{\Pi}(\tilde{\aWWd}(s))](\psi_k^j),\]
we have
\begin{align}
\nonumber
&|\Delta\Pi_{s,k/2^j}|\\
&\leq \norme{W-\tilde{W}}_{\alpha,T}\left(\norme{W}_{\alpha,T}+\norme{\tilde{W}}_{\alpha,T}\right)|s-k/2^j|2^{j/2(1-\alpha)}2^{-j/2}\label{eq:proof_existence_continuity}.	
\end{align}
According to the bounds \eqref{eq:diff_R_Rtilde}, \eqref{eq:proof_existence_continuity} and in writing
\begin{align*}
&[\Rc(\aWWd)-\Pi_s(\aWWd(s))-\tilde{\Rc}(\tilde{\aWWd})+\Pi_s(\tilde{\aWWd}(s))](\psi_k^j)\\
&=\Rc(\aWWd)-\Pi_{k/2^{j}}(\aWWd(k/2^{j}))-\tilde{\Rc}(\tilde{\aWWd})+\tilde{\Pi}_{k/2^j}(\tilde{\aWWd}(k/2^j))+\Delta\Pi_{s,k/2^j},
\end{align*}
we get
\begin{align*}
|\WW_{s,t}-\tilde{\WW}_{s,t}|&\leq C \Big[\norme{\tilde{\Pi}}_{2\alpha-1,T}
\norme{\aWWd-\tilde{\aWWd}}_{2\alpha-1,T}^*+\norme{\Pi-\tilde{\Pi}}_
{2\alpha-1,T}\norme{\aWWd}_{2\alpha-1,T}^*\\
&+\norme{W-\tilde{W}}_{\alpha,T}\Big(\norme{W}_{\alpha,T}
+\norme{\tilde{W}}_{\alpha,T}\Big)\Big]|t-s|^{2\alpha}.
\end{align*}
Yet we have, $\norme{\aWWd-\tilde{\aWWd}}_{2\alpha-1,T}^*=\norme{W-\tilde{W}}_{\infty,T}
+\norme{W-\tilde{W}}_{\alpha,T},$
and 
\[\norme{\Pi-\tilde{\Pi}}_
{2\alpha-1,T}\leq C \norme{W-\tilde{W}}_{\alpha,T}.\]
So finally,
\[\norme{\rW-\tilde{\rW}}_{\alpha,T}\leq C\norme{W-\tilde{W}}_{\alpha,T},\]
which proves the continuity.
\end{proof}

\begin{Rem}
Given that $2\alpha-1$ is negative, the uniqueness of $\WW$ does not hold, which is in accordance with Remark
\ref{uniq}.
\end{Rem}

%%%%%%%%%%%%%%%%%%%%%%%%%%%%%%%%%%%%%%%%%%%%%%%%%%%%%%%%%%%%%%%%%%%%%%%%%%%%
\section{Composition with a smooth function}
\label{sec_comp}
Before solving the general rough differential equation \eqref{rde} with the theory of regularity structures, we should give a sense of the composition of a modelled distribution with a function. Then we will be able to consider \eqref{rde} in the space of the modelled distributions.

The composition of a modelled distribution $\af\in D^\gamma_M$ with a smooth function $F$ is developed in \cite{hairer}. The author gives a general theorem which allows  the composition with an arbitrary smooth function $F$ when $\af$ takes its values in a model space $\modspace$ such that the smallest index of homogeneity is equal to $0$, \emph{i.e.} $\forall t\in\RR,~\af(t)\in \vect{\an,...} $. Thus, it is possible to define the composition as a Taylor expansion
\[\FF\circ\af(t)=\sum_k \frac{F^{(k)}(\bar{\af}(t))}{k!}(\af(t)-\bar{\af}(t)\an)^k,\label{comp}\]
where $\bar{\af}$ is the coordinate of $\af$ onto $\an$. The definition above makes no sense if the product between elements of the regularity structure is not defined. We can also find the general definition in \cite{hairer}. This is not useful here.
The idea of the decomposition \eqref{comp} is to compute a Taylor expansion of $F$ in $\bar{\af}$ the part of $\af$ which is the first approximation of $\Rc\af$.

Here we just prove (what is needed for solving $\eqref{rde}$) that $\FF\circ\af$ lives in the same space as $\af$ and that $\FF$ is Lipschitz in the particular case of modelled distribution of controlled rough paths.

\begin{The}
\label{def_comp}
Let $F\in C^2_b(\RR^d,\mathcal{L}(\RR^n,\RR^d))$. For $\alpha\in (1/3,1/2]$, given a rough path $\rW=(W,\WW)\in \rC^{\alpha}$, the controlled rough path $(y,y')\in \DD^{2\alpha}_W$, for all $\aY\in \rmodel$ defined by $\aY(t)=y_t\an+y'_t\aW$, the map $\FF$ such that
\[\FF\circ\aY(t)\defeq F(y_t)\an+ F'(y_t)y'_t\aW,\label{comp_rough}\]
 is in $\rmodel$. Moreover if $F\in C^3_b$ the function associated $\FF$ is Lipschitz, \emph{i.e.} for all $\aY,\tY\in \rmodel$
 \[ \label{eq_inequality_composition} \rnorme{\FF(\aY)-\FF(\atY)}\leq C\rnorme{\aY-\atY},\]
where $C$ is a constant.
\end{The}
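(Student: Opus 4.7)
The plan is to exploit Proposition~\ref{pro_equivalence_control_modelledcontrol_roughpath}, which identifies $(y,y')\in\DD^{2\alpha}_W$ with the modelled distribution $\aY=y\an+y'\aW$. Using item~\ref{enu:gamma_s_t} of Definition~\ref{roughStruDef}, I would first expand
\begin{align*}
\FF\circ\aY(t) - \Gamma^r_{t,s}\bigl(\FF\circ\aY(s)\bigr) &= \bigl(F(y_t) - F(y_s) - F'(y_s)y'_s W_{s,t}\bigr)\an \\
&\quad + \bigl(F'(y_t)y'_t - F'(y_s)y'_s\bigr)\aW.
\end{align*}
Thanks to Definition~\ref{defMod}, the membership $\FF\circ\aY\in\rmodel$ is therefore equivalent to bounding the first bracket by $|t-s|^{2\alpha}$ and the second by $|t-s|^\alpha$.

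For the $\aW$-component I would split
$F'(y_t)y'_t - F'(y_s)y'_s = (F'(y_t)-F'(y_s))y'_t + F'(y_s)(y'_t-y'_s)$
and estimate using $\norme{F''}_\infty$, together with the $\alpha$-Hölder regularity of $y$ (inherited from the identity $y_{s,t}=y'_sW_{s,t}+R^y_{s,t}$ and $W\in\ahol$) and of $y'$. For the $\an$-component, a second-order Taylor expansion of $F$ at $y_s$ gives $F(y_t) - F(y_s) = F'(y_s)y_{s,t} + O(|y_{s,t}|^2)$, so substituting $y_{s,t}=y'_sW_{s,t}+R^y_{s,t}$ reduces the bracket to $F'(y_s)R^y_{s,t}$ plus a term of size $O(|t-s|^{2\alpha})$, both of which have the right order.

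For the Lipschitz estimate, set $\atY=\ty\an+\ty'\aW$ and repeat the same decomposition for the modelled distribution $\FF\circ\aY-\FF\circ\atY$. Its $\aW$-component $F'(y_t)y'_t-F'(\ty_t)\ty'_t-F'(y_s)y'_s+F'(\ty_s)\ty'_s$ is handled by repeatedly adding and subtracting terms such as $F'(\ty_t)y'_t$ and $F'(y_s)\ty'_s$, then invoking the Lipschitz continuity of $F'$ (constant $\norme{F''}_\infty$) and the Lipschitz continuity of $F''$ (constant $\norme{F'''}_\infty$) on the resulting $\alpha$-Hölder increments. For the $\an$-component I would write down the second-order Taylor remainders for both $F(y_t)-F(y_s)$ and $F(\ty_t)-F(\ty_s)$ and group the differences so that a linear piece of the shape $F'(y_s)R^y_{s,t}-F'(\ty_s)R^{\ty}_{s,t}$ and a quadratic piece of the shape $F''(\xi)y_{s,t}^{\otimes 2}-F''(\tilde\xi)\ty_{s,t}^{\otimes 2}$ appear, each to be split by adding and subtracting mixed expressions like $F'(\ty_s)R^y_{s,t}$ and $F''(\ty_s)y_{s,t}^{\otimes 2}$.

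The main obstacle lies in the bookkeeping for the $\an$-component of the Lipschitz bound: several cross-terms must be decomposed so that one factor is an increment, a remainder difference, or an initial-value difference controlled by $\rnorme{\aY-\atY}$, while the complementary factor is bounded uniformly on $[0,T]$ by $\norme{F}_{C^3_b}$ and by a priori bounds on $\rnorme{\aY},\rnorme{\atY}$ (supplied by Proposition~\ref{pro_equivalence_control_modelledcontrol_roughpath}). Crucially, the $C^3_b$ hypothesis is not a mere convenience: it is needed precisely to Lipschitz-control $F''$ in the quadratic Taylor remainders, without which the $|t-s|^{2\alpha}$ bound cannot be obtained linearly in $\rnorme{\aY-\atY}$.
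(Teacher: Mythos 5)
Your computation of $\FF\circ\aY(t)-\Gamma^r_{t,s}\bigl(\FF\circ\aY(s)\bigr)$ and the resulting membership bounds reproduce the paper's first step exactly (in fact your labelling of the $\an$- and $\aW$-components, with the bound $|t-s|^{2\alpha}$ on the $\an$-part and $|t-s|^\alpha$ on the $\aW$-part, is the correct one; the paper's display appears to have the two components transposed). That portion is essentially identical.

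For the Lipschitz estimate you take a genuinely different route. You propose a direct, term-by-term comparison: expand both $\FF\circ\aY$ and $\FF\circ\atY$ by second-order Taylor formulas with explicit remainders, then peel the difference apart by repeatedly adding and subtracting cross-terms such as $F'(\ty_s)R^y_{s,t}$ and $F''(\ty_s)y_{s,t}^{\otimes 2}$, using $\norme{F'''}_\infty$ to Lipschitz-control the $F''$ factors. The paper instead writes the difference $\FF(\aY(s))-\FF(\atY(s))$ as a single integral over $u\in[0,1]$ of derivatives of $F$ evaluated along the interpolant $\aA_u(s)=\atY(s)+u\aZ(s)$, projected onto $\modspace_{<2\alpha}$, and then pushes the whole integral representation from $s$ to $t$ through $\Gamma^r_{t,s}$, collecting a uniform remainder $\aR$ with $\norme{\aR(s,t)}_\beta\est|t-s|^{2\alpha-\beta}$. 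The net effect is the same, but the integral representation lets the paper treat the $\an$- and $\aW$-components in one stroke and is the form that generalizes cleanly to regularity structures with more than two positive levels, whereas your approach is more elementary and forces the ``bookkeeping'' you flag yourself. Your route does close, because each cross-term you produce is either $O(|t-s|^{2\alpha})$ times a factor controlled by $\rnorme{\aY-\atY}$ (an increment of $y'-\ty'$, a remainder difference $R^y-R^{\ty}$, or a sup-difference $y-\ty$) or $O(|t-s|^{2\alpha})$ times $\norme{F}_{C^3_b}$ and an a priori bound on $\rnorme{\aY},\rnorme{\atY}$; in particular $\norme{y-\ty}_{\alpha,T}\lesssim\rnorme{\aY-\atY}$ via Proposition~\ref{pro_equivalence_control_modelledcontrol_roughpath}, which you correctly invoke. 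You also identify, as the paper implicitly does, that $C^3_b$ enters precisely to Lipschitz-control $F''$ in the quadratic remainder, so the two arguments use the hypothesis in the same place.
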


\begin{Rem}
This theorem shows that the space $\rmodel$ is stable by a non linear composition $\FF$, provided that $\FF$ is regular enough.
So with Theorem~\ref{the_sewing_lemma}, we can build the integral
\begin{align*}
I(\FF(\aY))=\int_0^{\cdot} F(y_s)\di{\rW_s}.
\end{align*}
\end{Rem}

\begin{proof}
Firstly, let us show that $\FF$ is a map from $\rmodel$ to $\rmodel$.
A straightforward computation leads us to the two following expressions

$$\norme{\FF(\aY)(t)-\Gamma^r_{t,s}\left(\FF(\aY)(s)\right)}_{0} = \norme{F'(y_t)y'_t-F'(y_s)y'_s},$$ 
$$\norme{\FF(\aY)(t)-\Gamma^r_{t,s}\left(\FF(\aY)(s)\right)}_{\alpha
} =\norme{F(y_t)-F(y_s)-F'(y_s)y'_sW_{s,t}}.$$
Let us denote the left-hand of the first equality $\Delta^0_{s,t}$ and of the second one $\Delta^\alpha_{s,t}$. We obtain
\begin{eqnarray*}
\Delta^0_{s,t} &\leq & \norme{F'(y_t)}\norme{y'_t-y'_s}+\norme{y'_s}\norme{F'(y_t)-F'(y_s)}\\
& \leq & \norme{F'}_{\infty,T}\norme{y'}_{\alpha}\abs{t-s}^\alpha+\norme{y'}_{\infty,T}\norme{F''}_{\infty,T}\norme{Y}_{\alpha,T}\abs{t-s}^\alpha
\end{eqnarray*}
and 
\begin{eqnarray*}
\Delta_{s,t}^\alpha &= &\norme{F(y_t)-F(y_s)-F'(y_s)(y_{s,t}-R^y_{s,t})}\\
&\leq& \norme{F(y_t)-F(y_s)-F'(y_s)y_{s,t}}+\norme{F'(y_s)R^y_{s,t}}\\
&\leq & \frac{1}{2}\norme{F''}_{\infty,T}\norme{y_{s,t}}^2+\norme{F'}_{\infty,T}\norme{R^y}_{2\alpha,T}\abs{t-s}^{2\alpha}\\
&\leq & \frac{1}{2}\norme{F''}_{\infty,T}\norme{y}^2_{\alpha,T}\abs{t-s}^{2\alpha}+\norme{F'}_{\infty,T}\norme{R^y}_{2\alpha,T}\abs{t-s}^{2\alpha}.
\end{eqnarray*}
This proves that $\FF(\aY)\in \rmodel$.

We now prove the inequality \eqref{eq_inequality_composition}. A more general proof can be found in \cite{hairer}. We define $\aZ=\aY-\atY$, which is in $\rmodel$ by linearity. We denote by $\proj$\nomenclature{$\proj$}{The projection onto $\modspace_{<2\alpha}$} the projection onto $\modspace_{<2\alpha}$. Using the integration by parts formula, one can check that
$$\FF(\aY(s))-\FF(\atY(s))=\sum_{k=0}^1\int_0^1F^{(k)}(\ty_s+uz_s)\proj\left[\left[(\ty'_s+uz'_s)\aW\right]^k\aZ(s)\right]\di{u}.$$
Then, we compute the expansion between $s$ and $t$ of $\aDelta(s)\defeq\FF(\aY(s))-\FF(\atY(s))$. We denote $\aA_u(s)\defeq\atY(s)+u\aZ(s)$. When $u$ is fixed, $\aA_u$ is in $\rmodel$. We have

\begin{eqnarray*}
\ga\aDelta(s)&=&\sum_{k=0}^1\int_0^1F^{(k)}(\aA_u(s))\ga\proj\left([A'_u(s)\aW]^k\aZ(s)\right)\di{u}\\
&=&\sum_{k=0}^1\int_0^1F^{(k)}(\aA_u(s))[\ga(A'_u(s)\aW)]^k\ga\aZ(s)\di{u} + \aR(s,t),
\end{eqnarray*}
where $\aR$ is a remainder such that $\norme{\aR(s,t)}_{\beta}\est \abs{t-s}^{2\alpha-\beta}$ for $\beta\in\{0,\alpha\}$. From now, we denote by $\aR$ all the remainder terms which satisfy this property.

We now shift the last expression from $s$ to $t$.
On the one hand
$$\ga^r(A'_u(s)\aW)=\ga^r\aA_u(s)-A_u(s)\an=\aA_u(t)-A_u(s)\an+\aR(s,t).$$
On the other hand
$$\ga^r\aZ(s)=\aZ(t)+\aR(s,t).$$
This yields
$$\ga^r\aDelta(s)=\sum_{k=0}^1\int_0^1F^{(k)}(\aA_u(s))[A'_u(t)\aW+(A_u(t)-A_u(s))\an]^k\aZ(s)\di{u} + \aR(s,t).$$
It remains to shift $F^{(k)}$ from $s$ to $t$. With the classical Taylor expansion formula,
$$F^{(k)}(A_u(s))=\sum_{0\leq l+k\leq1}F^{(k+l)}(A_u(t))(A_u(s)-A_u(t))^l+O(\abs{t-s}^{2\alpha-k\alpha}),$$
because $\norme{A_u(t)-A_u(s)}\leq\abs{t-s}^{\alpha}.$
The bound
$$\norme{[A'_u(t)\aW+(A_u(t)-A_u(s))\an]^k}_\beta\est \abs{t-s}^{k\alpha-\beta}$$
holds. Finally, with the two previous expressions,
\begin{eqnarray*}
\ga^r\aDelta(s)&=&\sum_{0\leq l+k\leq1} F^{(k+l)}(A_u(t))(A_u(s)-A_u(t))^l\\
&\times &[A'_u(t)\aW+(A_u(t)-A_u(s))\an)]^k\aZ(t)+\rnorme{\aZ}O(\abs{t-s}^{2\alpha-\beta})\\
&=& \sum_{0\leq k\leq 1} F^{(k)}(A_u(t))[A'_u(t)\aW]^k\aZ(t)+\rnorme{\aZ}O(\abs{t-s}^{2\alpha-\beta})\\
&=& \aDelta(t)+\rnorme{\aZ}O{\abs{t-s}^{2\alpha-\beta}},
\end{eqnarray*}
which proves the inequality.
\end{proof}

%%%%%%%%%%%%%%%%%%%%%%%%%%%%%%%%%%%%%%%%%%%%%%%%%%%%%%%%%%%%%%%%%%%%
\section{Solving the rough differential equations}
\label{sec_solve}
 Theorem \ref{the_sewing_lemma} combined with  Theorem \ref{def_comp} allow us to solve the rough differential equations in the modelled distribution space $\rmodel$.

\begin{The}
\label{the_abstract_solve}
Given $\xi\in \RR^d$, $F\in C_b^3(\RR^{d},\mathcal{L}(\RR^n,\RR^d))$, a rough path $\rW=(W,\WW)\in \rC^\beta$ with $\beta\in (1/3,1/2)$, there is a unique modelled distribution $\aY\in D^{2\beta}_{M^r}$ such that for all $t\in [0,T],$
\[\aY(t)=\xi \an+L(\FF(\aY))(t),\label{eq_abstract_rde}\]
 where $L$ is defined in Theorem \ref{the_sewing_lemma}.
\end{The}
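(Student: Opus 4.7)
The plan is to solve \eqref{eq_abstract_rde} by a standard Picard iteration in the Banach space $D^{2\beta}_{M^r}$, exploiting the smoothing in time coming from the fact that both $L$ and $\FF$ improve/preserve the regularity class and that both are continuous (Theorems \ref{the_sewing_lemma} and \ref{def_comp}). Define the map
\[
\Psi : D^{2\beta}_{M^r}\longrightarrow D^{2\beta}_{M^r},\qquad
\Psi(\aY)(t)\defeq \xi\an+L(\FF(\aY))(t),
\]
and observe that a fixed point of $\Psi$ is exactly a solution to \eqref{eq_abstract_rde}. The first step is to check that $\Psi$ is well-defined: by Theorem~\ref{def_comp}, $\aY\in\rmodel$ implies $\FF(\aY)\in\rmodel$, and then by Theorem~\ref{the_sewing_lemma}, $L(\FF(\aY))\in\rmodel$; adding the constant modelled distribution $\xi\an$ keeps us in $\rmodel$.

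Next, I would restrict the iteration to a time horizon $T_0\leq T$ that will be chosen small enough. Fixing a closed ball $B_R\subset D^{2\beta}_{M^r_{|[0,T_0]}}$ of radius $R$ around the constant function $t\mapsto \xi\an$, I would show, using the explicit bound on $\FF(\aY)$ obtained in the proof of Theorem~\ref{def_comp} together with the continuity of $L$ from Theorem~\ref{the_sewing_lemma}, that there exists $R$ and $T_0$ with $\Psi(B_R)\subset B_R$. The key mechanism is that the purely $\norme{\cdot}_{2\beta,T_0}$-part of the norm of $L(\FF(\aY))$ is of order $\norme{\rW}_{\beta,T_0}^2\bigl(1+R\bigr)^2$ times a constant, while the $L^\infty$-part of $L(\FF(\aY))(t)-L(\FF(\aY))(0)$ picks up an extra factor $T_0^\beta$ by Hölder regularity, so both contributions can be controlled for $T_0$ small.

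For the contraction step, I apply the Lipschitz estimate \eqref{eq_inequality_composition} of Theorem~\ref{def_comp} together with the continuity of $L$ from Theorem~\ref{the_sewing_lemma} to obtain, for $\aY,\tY\in B_R$ with $\aY(0)=\tY(0)=\xi\an$,
\[
\rnorme{\Psi(\aY)-\Psi(\tY)}
\leq C_{F,\rW}\,\rnorme{\aY-\tY}.
\]
Because $\Psi(\aY)(0)-\Psi(\tY)(0)=0$, I can pass from the semi-norm to the full norm $\rnorme{\,\cdot\,}^*$ at the cost of a factor $T_0^\beta$, so for $T_0$ small enough the contraction constant becomes strictly less than $1$. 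The Banach fixed point theorem then yields a unique solution $\aY$ on $[0,T_0]$.

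Finally, I would extend to the full interval $[0,T]$ by iteration: starting from $\aY(T_0)$, one solves the same problem on $[T_0,2T_0]$ with $\aY(T_0)$ as new initial datum, noting that the contraction constant $C_{F,\rW}$ depends only on $\|\rW\|_{\beta,T}$ and $\|F\|_{C^3_b}$, so a uniform $T_0$ works on each sub-interval. Uniqueness on $[0,T]$ follows by applying local uniqueness successively on each sub-interval. The main obstacle will be the bookkeeping that makes the contraction quantitative: one needs to identify a norm on the affine subspace $\{\aY:\aY(0)=\xi\an\}$ on which the composite map $L\circ\FF$ has Lipschitz constant $O(T_0^\beta)$, and for this one must carefully track how the $\norme{\cdot}_{2\beta,T_0}^*$ norms of $\FF(\aY)$ and its differences scale with $T_0$, rather than just using the crude continuity statements of the previous two theorems.
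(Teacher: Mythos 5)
Your overall strategy---Picard iteration via Banach fixed point---is exactly what the paper does, but two points in your sketch are wrong, and the ``main obstacle'' you flag at the end is precisely where the paper's essential trick lives, which your plan does not supply.

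First, the small parameter. You claim that one can ``pass from the semi-norm to the full norm $\norme{\cdot}^*_{2\beta,T_0}$ at the cost of a factor $T_0^\beta$''; but when $\Psi(\aY)(0)=\Psi(\atY)(0)$ the full norm and the semi-norm of the difference are simply \emph{equal} (the initial-value term vanishes), so this manoeuvre produces no factor of $T_0$ at all. Working in the single class $D^{2\beta}_{M^r}$ and invoking only the Lipschitz continuity of $L$ and $\FF$ from Theorems~\ref{the_sewing_lemma} and \ref{def_comp} gives $\rsnorme{\Psi(\aY)-\Psi(\atY)}\leq C\rsnorme{\aY-\atY}$ with a constant $C$ that has no reason to be small. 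The paper's mechanism is different: it fixes $1/3<\alpha<\beta<1/2$, views $\rW\in\rC^\beta\subset\rC^\alpha$, and runs the fixed point in $D^{2\alpha}_{M^r}$ rather than $D^{2\beta}_{M^r}$. The factor $T^{\beta-\alpha}$ then appears systematically from the elementary embedding $\norme{W}_{\alpha,T}\leq T^{\beta-\alpha}\norme{W}_{\beta,T}$ (and likewise for $\WW$), and this is what makes both the invariance and contraction estimates shrink with $T$. One then checks a posteriori that the fixed point lies in $D^{2\beta}_{M^r}$. If you insist on a single exponent, the correct small parameter comes instead from noting that the \emph{sup-norms} of the components of a modelled distribution vanishing at $0$ are controlled by $T_0^\alpha$ or $T_0^{2\alpha}$ times the $D^{2\alpha}$ semi-norm, and then tracking these sup-norms through the explicit estimates inside the proofs of Theorems~\ref{the_sewing_lemma} and \ref{def_comp}; but this is exactly the bookkeeping you set aside, and your stated mechanism does not do it.

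Second, the invariant set. Your ball is centered at the constant modelled distribution $t\mapsto\xi\an$, but $\Psi(\aY)(0)=\xi\an+F(\xi)\aW$ for any $\aY$ whose $\an$-component at $0$ is $\xi$, so your ball is not mapped into itself unless $F(\xi)=0$. The correct affine constraint, which the paper imposes, is $\aY(0)=\xi\an+F(\xi)\aW$: both the $\an$-component and the $\aW$-component (the Gubinelli derivative) at time $0$ must be fixed. Without fixing the $\aW$-component one also loses the cancellation $\FF(\aY)(0)-\FF(\atY)(0)=0$ that is needed to make the sup-norm control above bite. Finally, a small remark: since the eventual $T$ is chosen uniformly in the data (the constants depend only on $\norme{\rW}_{\beta,T}$ and $\norme{F}_{C^3_b}$), the paper observes that the solution actually extends to $T=+\infty$, so the patching step you describe, while correct, is not needed.
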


\begin{proof}
We  prove that the operator $N(\aY)\defeq  \xi\an+L(\FF(\aY))$ where $L$  is defined in  Theorem \ref{the_sewing_lemma}, has a unique fixed point. For this we show that the unit ball of $\rmodel$ is invariant under the action of $N$, and then that $N$ is a strict contraction.

These two properties can be obtained by choosing a wise time  interval $[0,T]$. We take a rough path $\rW=(W,\WW)\in \rC^{\beta}\subset \rC^{\alpha}$ with $1/3<\alpha<\beta<1/2$ and $\aY\in \rmodel$. This trick allows us to have a $T^{\beta-\alpha}$ in our estimates. Thus, with a $T$ small enough we  prove the fixed point property. We start by choosing $T\leq 1$.

According to  Theorem \ref{def_comp} $\FF(\aY)\in \rmodel$, thus  Theorem \ref{the_sewing_lemma} shows that $N(\aY)\in \rmodel$.
If $\aY$ is a fixed point of $N$ then $\aY\in D^{2\beta}_{M^r}$, thanks to the fact that $\rW\in \rC^{\beta}$. Indeed,
$$\norme{\aY(t)-\ga\aY(s)}_\beta=\norme{y_{s,t}}\leq \norme{y'}_{\infty,T}\norme{W}_{2\beta,T}\abs{t-s}^{2\beta}+\norme{R^y}_{2\alpha,T}\abs{t-s}^{2\alpha},$$
and
$$\norme{\aY(t)-\ga\aY(s)}_0=\norme{y_{s,t}-y'_sW_{s,t}}\leq \norme{y'}_{\infty,T}\norme{\WW_{s,t}}+O(\abs{t-s}^{3\alpha}).$$
As a result of the fixed point property $y'=F(y)$. This proves that $\aY\in D^{2\beta}_{M^r}$.

We recall that $\rnorme{\aY}=\sup_{\epsilon\in\{0,\alpha\}}\norme{\aY(0)}_\epsilon + \rsnorme{\aY},$
where $$\rsnorme{\aY}=\sup_{t,s\in [0,T],\epsilon\in \{0,\alpha\}}\frac{\norme{\aY(t)-\ga\aY(s)}_{\epsilon}}{\abs{t-s}^{\epsilon}}.$$
It is more convenient to work with the semi-norm $\rsnorme{\cdot}$, so we define the affine ball unit on $[0,T]$
$$B_T=\{\aY\in \rmodel,~\aY(0)=\xi\an+f(\xi)\aW,~\rsnorme{\aY}\leq 1\}.$$ 
\paragraph{Invariance:} For $\aY\in B_T,$ on has $\rsnorme{\FF(\aY)}\leq \rsnorme{\aY}$ and \begin{align*}
{N(\aY)}= \rsnorme{L(\FF(\aY))}.
\end{align*}
On the on hand, according to the reconstruction map,
\begin{eqnarray*}
&~&\norme{(\aI\FF(\aY))_{s,t}-F(y_s)W_{s,t}}\\
&\leq & \norme{F'(y)y'}_{\infty,T}\norme{\WW}_{2\alpha,T}\abs{t-s}^{2\alpha}+C\rnorme{\FF(\aY)}\abs{t-s}^{3\alpha}\\
&\leq & \rnorme{\FF(\aY)}\norme{\WW}_{2\alpha}\abs{t-s}^{2\alpha}+C\rnorme{\FF(\aY)}\abs{t-s}^{3\alpha}\\
&\leq &\norme{F'}_\infty\left[  \rnorme{(\aY)}\norme{\WW}_{2\beta,T}T^{\beta-\alpha}\abs{t-s}^{2\alpha}+ C\rnorme{\aY}\abs{t-s}^{2\alpha}T^{\alpha}\right],
\end{eqnarray*}
because $\norme{\cdot}_{\beta}\leq \norme{\cdot}_\alpha T^{\beta-\alpha}$.
Using the fact that $T^\alpha\leq T^{\beta-\alpha}$ and that $\aY\in B_T$  we obtain
$$\norme{N(\aY)}_{0}\leq CT^{\beta-\alpha},$$
where $C$ is independent of $\aY$.
On the other hand,
\begin{eqnarray*}
\norme{y_{s,t}}&\leq &\norme{y'}_{\infty,T}\norme{W}_{\alpha,T}\abs{t-s}^{\alpha}+\norme{R^y}_{2\alpha,T}\abs{t-s}^{2\alpha}\\
&\leq & \rnorme{\aY}\norme{W}_{\beta,T} T^{\beta-\alpha}\abs{t-s}^{\alpha}+\norme{R^y}_{2\alpha,T} T^\alpha \abs{t-s}^\alpha\\
&\leq & \rnorme{\aY}\norme{W}_{\beta,T} T^{\beta-\alpha}\abs{t-s}^{\alpha}+\rnorme{\aY} T^\alpha \abs{t-s}^\alpha.
\end{eqnarray*}
Using the last inequality
\begin{eqnarray*}
\norme{F(y)}_{\alpha,T} &\leq & \norme{F'}_\infty\norme{y}_{\alpha,T}\\
&\leq &\rnorme{\aY}\norme{W}_{\beta,T} T^{\beta-\alpha}\abs{t-s}^{\alpha}+\rnorme{\aY} T^\alpha \abs{t-s}^\alpha,
\end{eqnarray*} which leads to $\norme{N(\aY)}_\alpha\leq CT^{\beta-\alpha}$. Finally, we obtain the following estimate $\rsnorme{N(\aY)}\leq CT^{\beta-\alpha}$, where $C$ does not depend on $\aY$. By choosing $T=T_0$ small enough, we show that $N(B_{T_0})\subset B_{T_0}$.
\paragraph{Contraction:} For $\aY,\atY\in \rmodel,$ 
\begin{eqnarray*}
\rsnorme{N(\aY)-N(\atY)}&\leq & \norme{N(\aY)-N(\atY)}_0+\norme{N(\aY)-N(\atY)}_\alpha\\
& \leq &C \rnorme{\FF(Y)-\FF(\atY)}T^{\beta-\alpha}+ \norme{F(y)-F(\ty)}_{\alpha}\\
& \leq &C \rnorme{\aY-\atY}T^{\beta-\alpha}+ \norme{F'}_\infty\norme{y-\ty}_{\alpha},
\end{eqnarray*}
according to \eqref{eq_inequality_composition}. Then it is easy to show that $$\norme{y-\ty}_{\alpha}\leq C T^{\beta-\alpha}\rsnorme{\aY-\atY}.$$
Finally, $\rsnorme{N(\aY)-N(\atY)}\leq CT^{\beta-\alpha}\rsnorme{\aY-\atY}$
where $C$ does not depend on neither $\aY$ nor $\atY$. So with $T$ small enough, $N(B_{T})\subset B_{T}$ and $N$ is a strict contraction. So, there is a unique solution $\aY\in \rmodel$ to \eqref{eq_abstract_rde} on $[0,T]$. As mentioned at the beginning of the proof, $\aY$ is in$ D^{2\beta}_{M^r}$. 
\end{proof}

\begin{Cor}
Given $\xi\in \RR^d$, $F\in C_b^3(\RR^d,\mathcal{L}(\RR^{n},\RR^d))$, a rough path $\rW=(W,\WW)\in \rC^\beta$ with $\beta\in (1/3,1/2)$, there is a unique controlled rough path $(y,y')\in \DD^{2\beta}_W$ such that for all $t\in [0,T]$
\[y(t)=\xi+\int_0^t F(y_u)\di{\rW_u},\]
where the integral has to be understood as the controlled rough path integral (Theorem \ref{classic_rough_int}). 
\end{Cor}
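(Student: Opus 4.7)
The plan is to reduce this corollary to the abstract fixed-point result of Theorem~\ref{the_abstract_solve} by transporting the abstract solution $\aY\in D^{2\beta}_{M^r}$ to a controlled rough path via the isomorphism of Proposition~\ref{pro_equivalence_control_modelledcontrol_roughpath}. Since the abstract equation \eqref{eq_abstract_rde} was built precisely so that each of its ingredients ($\FF$, $L$, $\Rc$) corresponds to the concrete operations on controlled rough paths, the translation should be essentially a dictionary lookup.

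First I would apply Theorem~\ref{the_abstract_solve} with the given data $\xi$, $F$, $\rW$ to obtain a unique $\aY\in D^{2\beta}_{M^r}$ satisfying $\aY(t)=\xi\an+L(\FF(\aY))(t)$. By Proposition~\ref{pro_equivalence_control_modelledcontrol_roughpath} we may write $\aY(t)=y_t\an+y'_t\aW$ with $(y,y')\in\DD^{2\beta}_W$, and by Theorem~\ref{def_comp} we have $\FF(\aY)(t)=F(y_t)\an+F'(y_t)y'_t\aW$. The definition of $L$ in Theorem~\ref{the_sewing_lemma} then reads $L(\FF(\aY))(t)=\aI(\FF(\aY))(t)\an+F(y_t)\aW$, and by the remark following Theorem~\ref{the_sewing_lemma} the first coordinate $\aI(\FF(\aY))(t)$ coincides with $\int_0^t F(y_u)\di{\rW_u}$ in the sense of Theorem~\ref{classic_rough_int}. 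Projecting the abstract equation onto $\an$ then yields $y_t=\xi+\int_0^t F(y_u)\di{\rW_u}$, while projecting onto $\aW$ forces the Gubinelli derivative to be $y'_t=F(y_t)$.

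For uniqueness, I would run the correspondence in the opposite direction: given any $(y,y')\in\DD^{2\beta}_W$ solving the integral equation, its Gubinelli derivative is necessarily $y'=F(y)$ (read off the $\aW$-coordinate of the equation, using that $\int_0^\cdot F(y_u)\di{\rW_u}$ has Gubinelli derivative $F(y)$ by Theorem~\ref{classic_rough_int}). Setting $\aY(t)\defeq y_t\an+F(y_t)\aW$, Proposition~\ref{pro_equivalence_control_modelledcontrol_roughpath} places $\aY$ in $D^{2\beta}_{M^r}$, and by construction $\aY$ satisfies the abstract fixed-point equation \eqref{eq_abstract_rde}. Uniqueness in Theorem~\ref{the_abstract_solve} then gives uniqueness of $(y,y')$.

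The only genuinely subtle point is that Theorem~\ref{the_abstract_solve} produces a solution only on a small interval $[0,T_0]$ where the contraction estimate closes. The main obstacle is therefore to extend the solution to the whole $[0,T]$. I would do this by the standard concatenation argument: the length $T_0$ obtained in the contraction step depends only on $\norme{\rW}_{\beta,T}$, $\norme{F}_{C^3_b}$ and $\beta-\alpha$, but not on the initial condition $\xi$; hence one may iterate the construction starting from $y_{T_0}$ and keep appending intervals. Since $[0,T]$ is compact and the step size is bounded below, finitely many iterations exhaust $[0,T]$. Chasles' relation for the rough integral (an immediate consequence of the defining Riemann sum in Theorem~\ref{classic_rough_int}) ensures that the concatenated $(y,y')$ satisfies the integral equation on the full interval, and uniqueness propagates step by step.
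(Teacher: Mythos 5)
Your proposal is correct and follows essentially the same route as the paper's one-line proof, which simply says to project Equation~\eqref{eq_abstract_rde} onto $\an$ and onto $\aW$. Your extra discussion of extending the local contraction to all of $[0,T]$ by concatenation (the local time step being uniform in $\xi$) is precisely the content of the remark immediately following the corollary in the paper, so it is a welcome amplification rather than a departure.
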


\begin{Rem}
Actually, we can extend this result to $T=+\infty$, because $T$ is chosen uniformly
with respect to parameters of the problem.
\end{Rem}

\begin{proof}
It suffices to project Equation~\eqref{eq_abstract_rde} onto $\an$ and onto $\aW$.
\end{proof}

\section*{Acknowledgments}

I am very grateful to Laure Coutin and Antoine Lejay for their availability, help and their careful rereading.

I deeply thank Peter Friz for suggesting me this topic during my master thesis and for  welcoming me at the Technical University of Berlin for 4 months.

\printnomenclature

\newpage
\bibliographystyle{alpha}
\bibliography{biblio}

\end{document}